\newcommand{\opt}{\mathbf{O}}
\renewcommand{\tilde}{\widetilde}
\newcommand{\A}{\mathcal{A}}
\newcommand{\E}{\mathbf{E}}
\newcommand{\M}{\mathbf{M}}
\newcommand{\N}{\mathbb{N}}
\newcommand{\p}{\partial}
\newcommand{\R}{\mathbb{R}}
\newcommand{\x}{\mathbf{x}}
\newcommand{\y}{\mathbf y}
\newcommand{\ug}{\mathbf u}
\newcommand{\Ug}{\mathbf U}
\newcommand{\vg}{\mathbf v}
\newcommand{\Vg}{\mathbf V}
\newcommand{\Wg}{\mathbf W}
\newcommand{\wg}{\mathbf w}
\newcommand{\qg}{\mathbf q}
\newcommand{\pg}{\mathbf p}
\newcommand{\norm}[1]{\left\lVert#1\right\rVert}
\newcommand{\tphi}{\tilde \varphi}
\newcommand{\ds}{\displaystyle}
\newcommand{\rb}{\overline{r}}
\newcommand{\Oc}{\mathcal  O}
\newcommand{\rmax}{r_{\max}}
\newcommand{\md}{\mathrm{d}}
\newcommand{\Fm}{\mathcal{F}}
\definecolor{aquamarine}{rgb}{0.13, 0.68, 0.8}
\def\lp {\left( }
\def\rp {\right) }
\newcommand{\baco}{\left\{ \begin{array}}\newcommand{\eaco}{\end{array} \right.}
\newtheorem{theorem}{Theorem}[section]
\newtheorem{proposition}[theorem]{Proposition}
\newtheorem{corollary}[theorem]{Corollary}
\newtheorem{remark}[theorem]{Remark}
\theoremstyle{definition}
\newcolumntype{C}[1]{>{\centering\arraybackslash}m{#1}}
\newcommand{\setmysize}[1]{\setkeys{Gin}{#1}}
\title{{\bf{Adaptation in a heterogeneous environment\\  II: To be three or not to be }}\thanks{This work has received funding from the French ANR RESISTE (ANR-18-CE45-0019) and DEEV (ANR-20-CE40-0011-01) projects, from Excellence Initiative of Aix-Marseille Universit\'e~-~A*MIDEX, a French ``Investissements d'Avenir'' program, and from  the {\it région Normandie} BIOMA-NORMAN (21E04343) project. The authors acknowledge support of the Institut Henri Poincaré (UAR 839 CNRS-Sorbonne Universit\'e) and LabEx CARMIN (ANR-10-LABX-59-01).}}
\author{M. Alfaro$^{\hbox{\small{ a},\small{b}}}$, F. Hamel$^{\hbox{\small{ c}}}$, F. Patout$^{\hbox{\small{ b}}}$ and L. Roques$^{\hbox{\small{ b}}}$\\
\\
\footnotesize{$^{\hbox{a }}$Univ Rouen Normandie, LMRS, CNRS, Rouen, France}\\
\footnotesize{$^{\hbox{b }}$INRAE, BioSP, 84914, Avignon, France}\\
\footnotesize{$^{\hbox{c }}$Aix Marseille Univ, CNRS, I2M, Marseille, France}}
\date{}
\begin{document}
\maketitle

\begin{abstract}
We propose a model to describe the adaptation of a phenotypically structured popu\-lation in a $H$-patch environment connected by migration, with each patch associated with a different phenotypic optimum, and we perform a rigorous mathematical analysis of this model. We show that the large-time behaviour of the solution (persistence or extinction) depends on the sign of a principal eigenvalue, $\lambda_H$, and we study the dependency of $\lambda_H$ with respect to $H$. This analysis sheds new light on the effect of increasing the number of patches on the persistence of a population, which has implications in agroecology and for understanding zoonoses; in such cases we consider a pathogenic population and the patches correspond to different host species. The occurrence of a {\it springboard} effect, where the addition of a patch contributes to persistence, or on the contrary the emergence of a detrimental effect by increasing the number of patches on the persistence, depends in a rather complex way on the respective positions in the phenotypic space of the optimal phenotypes associated with each patch. From a mathematical point of view, an important part of the difficulty in dealing with $H\ge 3$, compared to $H=1$ or $H=2$, comes from the lack of symmetry. Our results, which are based on a fixed point theorem,  comparison principles, integral estimates, variational arguments, rearrangement techniques, and numerical simulations, provide a better understanding of these dependencies. In particular, we propose a precise characterisation of the situations where the addition of a third patch increases or decreases the chances of persistence, compared to a situation with only two patches.
\end{abstract}

\noindent {\bf Keywords} Adaptation $\cdot$ Migration $\cdot$ Heterogeneous environment $\cdot$ Spillover $\cdot$ Eigenvalues

\section{Introduction}

In \cite{HamLavRoq20}, we analysed PDE systems describing the dynamics of adaptation of a phenotypically structured population, under the effects of mutation, selection and migration in a two-patch environment, each patch being associated with a different phenotypic optimum. Consistently with  current literature \cite{DebRon13,MesCzi97,PapDav13} in evolutionary biology, our analysis showed that migration between the two patches leads to a locally reduced fitness. This reduction in fitness is known as a \textit{migration load} ~\cite{GarKir97} and implies decreased chances of persistence of the global population in a two-patch system compared to a single patch environment.

From an epidemiological viewpoint, the two patches can be interpreted as two different types of \textit{hosts} (different species, or different genetic variants).
Thus, the above result means that it is more difficult for a pathogen to adapt and establish in a two-patch environment connected by migration than in a single patch environment. This observation is in agreement with one of the fundamental principles of agroecology~\cite{CaqGas20,FAO18}, which is that host species diversification should lead to a higher resilience of agroecosystems.

Nevertheless, as discussed in~\cite{LavMar20}, what is right for two patches may not be necessarily right for three patches or more. The presence of a third host may indeed cause a {\it springboard} effect, leading to higher chances of persistence of the pathogen, compared to an environment with two hosts. This is a common pattern in zoonoses. For instance the main reservoirs of influenza~A virus are the aquatic birds, but it is widely accepted that these viruses need to adapt in an intermediate host (such as pig or poultry) before they lead to an outbreak in human populations \cite{ParMur15,WebWeb01}. Coronaviruses, including SARS-CoV-1 and SARS-CoV-2, are also zoonotic, with bats as presumed main reservoir. Intermediate hosts are also suspected to have played an important role in the 2003 and 2019 outbreaks~\cite{LatHu20,LauWoo05}.

Up to our knowledge, there is no rigorous mathematical framework to study the effect of host diversification on the adaptation of a pathogen, when there are three hosts or more. Here, using the same assumptions as in \cite{HamLavRoq20}, we study a system for a phenotypically structured population, under the effects of mutation, selection and migration between several hosts. Our main goal is to check whether the introduction of a third host leads to the above-mentioned springboard effect, or whether, on the contrary,  it reduces the chances of persistence of the global population compared to a situation with two hosts. Thus, we mostly focus on the case of three hosts, described by the following system:
$$\left\{\begin{array}{rcl}
\partial_t u_1 (t,\x) & = &\ds \frac{\mu^2} 2 \ \Delta u_1(t,\x)+f_1(\x,u_1(t,\cdot))+\delta\,\left[\frac{u_2(t,\x)+u_3(t,\x)}{2} - u_1(t,\x)\right]\!, \vspace{2mm}\\
\partial_tu_2 (t,\x) & =&\ds \frac{\mu^2} 2 \ \Delta u_2(t,\x)+f_2(\x,u_2(t,\cdot)) +\delta\,\left[\frac{u_1(t,\x)+u_3(t,\x)}{2} - u_2(t,\x)\right]\!, \vspace{2mm}\\
\partial_t u_3 (t,\x) & =&\ds \frac{\mu^2} 2 \ \Delta u_3(t,\x)+f_3(\x,u_3(t,\cdot)) +\delta\,\left[\frac{u_1(t,\x)+u_2(t,\x)}{2} - u_3(t,\x)\right]\!,
\end{array}\right.$$
for $t>0,$ and $\x=(x_1,\cdots,x_n)\in \R^n$. Here, and as in \cite{HamLavRoq20}, $\x$ is a breeding value for phenotype (for short, we simply write ``phenotype'' in the sequel),  and corresponds to a set of $n\geq 1$ traits. The unknowns $u_i$ are the phenotype densities in the hosts $i\in\{1,2,3\}$, the Laplace operator describes the mutation effects on the phenotype, $\mu>0$ is a mutational parameter, $\delta>0$ the migration rate, and the functions $f_i(\x,u_i(t,\cdot))$ describe the growth of the phenotype $\x$ in the host $i$ (the precise assumptions on the functions $f_i$ are given below).  Note that the migration and mutation parameters are assumed to be identical over the three hosts. In particular, each host sends migrants to the other hosts, at a rate $\delta$, and the amount of migrants is evenly split between the other hosts, hence the factor $1/2$.

Some of our results also deal with the general case of $H\ge 2$ hosts. Thus, we also consider the following system:
\begin{equation}\label{eq:sys H}
\partial_t u_i(t,\x)=\frac{\mu^2}{2}\Delta u_i(t,\x)+f_i(\x,u_i(t,\cdot))+  \delta \left(   \sum \limits_{\substack{k=1 \\k \neq i}}^H\frac {u_k(t,\x)}{H-1}-u_i(t,\x) \right)\!,\ \ 1\le i\le H,
\end{equation}
for $t>0$ and $\x\in\R^n$. Again, we assume that each host sends migrants to the other hosts, at a rate $\delta$, and that the amount of migrants is evenly split between the other hosts, hence the factor $1/(H-1)$. With these assumptions, the migration rate from a given host $i$ towards the pool made up of all the $H-1$ other hosts does not depend on $H$. In the particular case $H=1$, there is no migration and~\eqref{eq:sys H} reduces to the scalar equation $\p_tu_1(t,\x)=(\mu^2/2)\Delta u_1(t,\x)+f_1(\x,u_1(t,\cdot))$.

Each host is characterised by a phenotype optimum $\opt_i \in \R^n$. Namely, the fitness (reproductive success) of a  phenotype $\x$ in the host $i$ is described by a function $r_i(\x)$ that decreases away from the optimum $\opt_i$. As in \cite{HamLavRoq20} we use Fisher's geometrical phenotype-to-fitness model (FGM)  \cite{MarLen15,Ten14}, which assumes that:
\begin{align}\label{def ri}
 r_i(\x) = \rmax-\alpha \, \frac{ \norm{\x-\opt_i}^2}{2},
\end{align}
where $\rmax\in\R$ is the fitness of the optimal phenotype ($\opt_i$) in the host $i$, $\alpha>0$ is a measure of the intensity of selection, and $\|\ \|$ denotes the Euclidean norm in $\R^n$ (we use the same notation for any dimension $n\ge1$, we also denote $\cdot$ the Euclidean inner product in $\R^n$).

In \cite{HamLavRoq20}, we considered two types of growth functions, either linear (corresponding to a Malthusian growth) or  logistic-like. Here, we only consider the logistic-like growth term, which describes a nonlocal competition between the phenotypes within each host. This is the most biologically relevant and mathematically involved case. It corresponds to $f_i(\x,\phi)=\phi(\x)\,\big(r_i(\x)-\int_{\R^n}\phi(\y)\md\y\big)$ for any $\x\in\R^n$ and any continuous~$L^1(\R^n)$ function $\phi:\R^n\to\R$, that is,
\begin{equation}\label{eq:f_type2}
f_i(\x,u_i(t,\cdot))=u_i(t,\x)\, \lp r_i(\x) -  \int_{\R^n} u_i(t,\y) \, \md\y\rp.
\end{equation}

When $H=2$, we established in \cite{HamLavRoq20} the existence and uniqueness of the solution of the Cauchy problem associated with \eqref{eq:sys H} with growth functions~\eqref{eq:f_type2}, under some symmetry assumptions on the initial conditions. Then, still in the case $H=2$, we obtained a characterisation of the large-time behaviour of the solution (persistence or extinction) based on the sign of a principal eigenvalue, here denoted $\lambda_2$.

In this work, we first extend these results to the general case $H\ge 2$. Let us emphasise that, when $H\geq 3$, symmetry arguments are no longer  applicable (except in some very particular configurations). In Section~\ref{sec:existence}, we thus state the existence and uniqueness of the solution of the Cauchy problem associated with~\eqref{eq:sys H}, and we establish a condition for the persistence of the population, based on the sign of a principal eigenvalue denoted~$\lambda_H$, and we describe the large-time behaviour of the total population size. Then, we propose in Section~\ref{sec:effect} a mathematical analysis of the effects of the parameters on the value of $\lambda_H$, including the effect of the mutation, selection and migration parameters. As explained above, our main objective is to investigate the effect of adding a third host, compared to a baseline situation with two hosts. In that respect, we fix  the position of the two optima~$\opt_1$ and~$\opt_2$, leading to a given value of~$\lambda_2$. Then, in Section~\ref{sec:H3vsH2}, we study the sign of $\lambda_2-\lambda_3$, depending on the position~$\opt_3$ of the optimum of the third host. This sign determines whether the presence of a third host increases ($\lambda_2-\lambda_3>0$) or decreases ($\lambda_2-\lambda_3<0$) the chances of persistence.  We propose a mathematical analysis, completed by some numerical simulations. The outcomes are various, and sometimes surprising, see the discussion in Section \ref{s:discussion}. We gather all the proofs in Section~\ref{sec:proofs}.


\section{Existence, uniqueness and persistence results}\label{sec:existence}


\subsection{The Cauchy problem}

For $H\ge 2$, we consider the nonlinear and nonlocal system~\eqref{eq:sys H}, where the functions $\x\mapsto r_i(\x)$ are as in \eqref{def ri}, while the functions $f_i(\x,u_i(t,\cdot))$ are as in \eqref{eq:f_type2}. System~\eqref{eq:sys H} is supplemented with an initial datum
\begin{equation}\label{data}
\ug^0=(u_1^0,\ldots,u_H^0),\ \ \text{$u_i^0\in C(\R^n)\cap L^\infty(\R^n)\cap L^1(\R^n)$ and $u_i^0\ge0$ for all $1\leq i\leq H$},
\end{equation}
and we focus on the well-posedness of the Cauchy problem \eqref{eq:sys H}-\eqref{data}, under some additional decay properties on $\ug^0$, see~\eqref{init bound} below. The proofs are very different from that for the case $H=2$ in~\cite{HamLavRoq20}, where symmetry arguments were used to reduce the study of the nonlinear system~\eqref{eq:sys H} to that of a linear scalar equation. Such arguments cannot be applied in the general case $H> 2$.

\begin{theorem}[Well-posedness]\label{thm:well-pos}
Assume that there exist positive constants $K$ and $\theta$ such that the initial condition $\ug^0$ in~\eqref{data} satisfies
\begin{equation}\label{init bound}
\forall\,1\leq i\leq H,\ \ \forall\,\x\in\R^n,\quad 0\le u_i^0(\x)\le K\,e^{-\theta\norm{\x}}.
\end{equation}
Then, there is a unique solution $\ug=(u_1,\ldots,u_H)$ of~\eqref{eq:sys H}-\eqref{data} in $C([0,+\infty)\times\R^n,\R^H)\cap C^{1;2}_{t;\x}((0,+\infty)\times\R^n,\R^H)$ such that
\begin{equation}\label{ineqexp}
\forall\,1\leq i\leq H,\ \ \forall\,t\ge0,\ \ \forall\,\x\in\R^n,\quad  0\le u_i(t,\x)  \leq K\,e^{(\rmax+1)t -\min(\theta,1/\mu)\norm{\x}},
\end{equation}
and the maps $t\mapsto\int_{\R^n}u_i(t,\x)\,\md\x$ are locally Lipschitz-continuous in $[0,+\infty)$.
\end{theorem}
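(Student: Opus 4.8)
The plan is to recast \eqref{eq:sys H} as a fixed-point problem for the vector of total population sizes, to exploit the cooperative structure of the linearised system together with an explicit exponentially decaying supersolution, and then to pass from local to global solvability via the a priori bound \eqref{ineqexp}. Fix $T>0$, set $\beta:=\min(\theta,1/\mu)$, and let $E_T$ be the Banach space of $\ug=(u_1,\dots,u_H)\in C([0,T]\times\R^n,\R^H)$ with finite norm $\|\ug\|_{E_T}:=\sup\{|u_i(t,\x)|\,e^{\beta\norm{\x}}:1\le i\le H,\ t\in[0,T],\ \x\in\R^n\}$; inside $E_T$, let $\mathcal C_T$ be the closed (hence complete) set of those $\ug\ge0$ with $u_i(0,\cdot)=u_i^0$ and $u_i(t,\x)\le\bar u(t,\x):=K\,e^{(\rmax+1)t-\beta\norm{\x}}$. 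For $\ug\in\mathcal C_T$, the quantities $I_i(t):=\int_{\R^n}u_i(t,\y)\,\md\y$ are well defined and continuous (by the exponential decay and dominated convergence), and freezing them turns \eqref{eq:sys H} into the \emph{linear, weakly-coupled, cooperative} system $\partial_t v_i=\tfrac{\mu^2}{2}\Delta v_i+(r_i(\x)-I_i(t)-\delta)\,v_i+\tfrac{\delta}{H-1}\sum_{k\ne i}v_k$, $v_i(0,\cdot)=u_i^0$, whose solution $\vg=:\Phi(\ug)$ exists, is unique, smooth for $t>0$ and nonnegative (the heat semigroup is positivity preserving and the off-diagonal couplings $\delta/(H-1)$ are nonnegative); it is conveniently produced by the variation-of-constants formula in $E_T$.

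The core observation is that $\bar u$, taken identically in each component, is a supersolution of the above linear system \emph{uniformly in} $\ug\in\mathcal C_T$. Indeed $\Delta e^{-\beta\norm{\x}}\le\beta^2e^{-\beta\norm{\x}}$ for $\x\ne0$ (the origin being harmless, $e^{-\beta\norm\x}$ being there a local maximum in $\x$), the migration terms give $-\delta\bar u+\tfrac{\delta}{H-1}\sum_{k\ne i}\bar u=0$, and $r_i(\x)\le\rmax$, $I_i(t)\ge0$, so that, using $\tfrac{\mu^2}{2}\beta^2\le\tfrac12$, one gets $\partial_t\bar u-\tfrac{\mu^2}{2}\Delta\bar u-(r_i-I_i-\delta)\bar u-\tfrac{\delta}{H-1}\sum_{k\ne i}\bar u\ge(\rmax+1-\tfrac{\mu^2}{2}\beta^2-\rmax)\bar u\ge\tfrac12\bar u\ge0$; moreover $\bar u(0,\cdot)=Ke^{-\beta\norm\x}\ge Ke^{-\theta\norm\x}\ge u_i^0$ since $\beta\le\theta$. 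By the parabolic comparison principle (legitimate here, as one works within exponentially bounded functions), $0\le v_i\le\bar u$, hence $\Phi(\mathcal C_T)\subset\mathcal C_T$, and any fixed point automatically obeys \eqref{ineqexp}.

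Next I would show $\Phi$ is a contraction. For $\ug,\tilde\ug\in\mathcal C_T$ with frozen coefficients $I_i,\tilde I_i$, the difference $w_i:=v_i-\tilde v_i$ solves the same cooperative principal system with zero initial datum and source $(\tilde I_i-I_i)\tilde v_i$, of absolute value at most $|I_i-\tilde I_i|\,\bar u$, while $|I_i(t)-\tilde I_i(t)|\le\int_{\R^n}|u_i-\tilde u_i|\le c_n\beta^{-n}\|\ug-\tilde\ug\|_{E_T}$. Estimating $S:=\sum_i|w_i|$ via Duhamel's formula against the Gaussian kernel of $\tfrac{\mu^2}{2}\Delta$ (using once more $r_i\le\rmax$, and $\sum_i\tfrac{\delta}{H-1}\sum_{k\ne i}|w_k|=\delta S$) yields $\|\Phi(\ug)-\Phi(\tilde\ug)\|_{E_T}\le C(T)\|\ug-\tilde\ug\|_{E_T}$ with $C(T)\to0$ as $T\to0^+$; alternatively, keeping $T$ arbitrary and replacing $\|\cdot\|_{E_T}$ by the equivalent norm with the extra weight $e^{-\Lambda t}$ makes $\Phi$ a contraction on $\mathcal C_T$ for $\Lambda$ large. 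Banach's fixed-point theorem then gives a solution on a first time interval; since \eqref{ineqexp} rules out finite-time blow-up and keeps the $I_i$ bounded on any $[0,T_0]$ (so that a uniform contraction time can be chosen), iterating produces the global solution, and \eqref{ineqexp} also forces uniqueness within its own class.

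Finally, standard parabolic regularity upgrades the solution to $C^{1;2}_{t;\x}((0,+\infty)\times\R^n,\R^H)$, and interior parabolic estimates on unit space-time cylinders show that $\nabla u_i$ (and $\Delta u_i$) decay exponentially fast in $\x$ like $u_i$, up to a polynomial factor; integrating the $i$-th equation of \eqref{eq:sys H} over $\R^n$, the Laplacian term contributes $\lim_{R\to+\infty}\int_{\p B_R}\p_\nu u_i\,\md\sigma=0$, whence $\tfrac{\md}{\md t}I_i(t)=\int_{\R^n}r_i u_i\,\md\x-I_i(t)^2+\delta\big(\tfrac1{H-1}\sum_{k\ne i}I_k(t)-I_i(t)\big)$, a quantity bounded on each $[0,T_0]$; therefore $t\mapsto I_i(t)$ is locally Lipschitz. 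The main obstacle is the combination of the potential $r_i$, which is unbounded below (decaying quadratically to $-\infty$), with the nonlocal reaction term $-u_i\int u_i$: \eqref{eq:sys H} is not a standard semilinear system, and --- unlike the case $H=2$ of~\cite{HamLavRoq20} --- no symmetry reduces it to a scalar equation, so the whole analysis must be carried out directly on the cooperative system in a weighted space designed so that both the optimal spatial decay rate $\min(\theta,1/\mu)$ and the comparison principle are preserved.
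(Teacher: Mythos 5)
Your overall strategy is the same as the paper's: freeze the nonlocal masses $I_i(t)=\int_{\R^n}u_i(t,\y)\,\md\y$, solve the resulting linear cooperative system, show the exponential barrier $K e^{(\rmax+1)t-\min(\theta,1/\mu)\|\x\|}$ is preserved by comparison, run a Banach fixed point on a weighted sup-norm space, and then iterate in time using the a priori bound \eqref{ineqexp} to get a global solution. The supersolution computation, the use of cooperativity, the treatment of the kink of $e^{-\beta\|\x\|}$ at the origin (the paper avoids it altogether by working with the planar exponentials $e^{-\theta\xi\cdot\x}$ and taking all unit $\xi$), the continuation argument and the derivation of the local Lipschitz continuity of $t\mapsto\int_{\R^n}u_i$ by integrating \eqref{eq:sys H} are all in line with the paper's proof.

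The genuine gap is the well-definedness of your map $\Phi$, i.e., the solvability of the frozen linear system, which you dismiss with ``conveniently produced by the variation-of-constants formula in $E_T$''. This does not work as stated: the potential $r_i(\x)$ decays quadratically to $-\infty$, so multiplication by $r_i$ is \emph{not} a bounded operator on your weighted space (if $|v|\le Ce^{-\beta\|\x\|}$ then $|r_i v|$ is only $O\big((1+\|\x\|^2)e^{-\beta\|\x\|}\big)$), and a Duhamel iteration against the Gaussian kernel of $\tfrac{\mu^2}{2}\Delta$ with $r_i v_i$ in the source neither closes nor contracts in $E_T$. You would need either the Schr\"odinger semigroups generated by $\tfrac{\mu^2}{2}\Delta+r_i$ (constructed, e.g., by Feynman--Kac or by approximation with truncated potentials, together with a comparison principle valid for potentials unbounded below within the exponentially decaying class), or the route taken in the paper: existence of a classical solution of the frozen system is obtained from Besala's fundamental-solution theorem for parabolic systems with unbounded coefficients \cite{Bes79}, and that theorem requires the coefficients, hence the frozen functions $t\mapsto I_i(t)$, to be H\"older continuous in time. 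This is precisely why the paper's fixed-point space $E$ carries the extra requirement that $t\mapsto\int_{\R^n}u_i(t,\x)\,\md\x$ belong to $C^{0,1/2}([0,T_1])$ and why the metric $d_E$ contains the corresponding H\"older component (so that this regularity is propagated and controlled by the contraction). Your set $\mathcal C_T$ encodes no time regularity of the $I_i$ beyond continuity, so with the tools you invoke the classical solvability, positivity and comparison for the frozen problem are not justified; the same looseness reappears in the contraction step, where ``Duhamel against the Gaussian kernel using $r_i\le\rmax$'' should be replaced either by a genuine supersolution argument for the cooperative system (as in the paper, with barriers of the form $C\,d(\ug,\tilde\ug)\,e^{t/T_1-\theta\xi\cdot\x}$) or by a Kato-type inequality, not by a formula in which the unbounded term $r_iw_i$ is treated as an $E_T$-source. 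The rest of your argument would go through once this construction of $\Phi$ is repaired.
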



\subsection{The principal eigenvalue}

Here, we present some linear material, namely the principal eigenvalue $\lambda_H$ and the  principal eigenvector $\Phi=(\varphi_1,\ldots,\varphi_H)$ solving $\mathcal A \Phi^T=\lambda_H\Phi^T$, where the operator
\begin{equation}\label{eq:defA}
\A:=-\frac{\mu^2}{2} \Delta-\begin{pmatrix}
r_1(\x)-\delta & \ds\frac{\delta}{H-1} & \cdots & \ds\frac{\delta }{H-1} \\
\ds\frac{\delta }{H-1} & r_2(\x)-\delta & \ddots & \vdots \\
\vdots & \ddots & \ddots & \ds\frac{\delta }{H-1} \\
\ds\frac{\delta }{H-1} & \cdots & \ds\frac{\delta }{H-1} & r_H(\x)-\delta \\
\end{pmatrix}
\end{equation}
is obtained by linearizing system \eqref{eq:sys H} around the trivial solution $(0,\ldots,0)$. Since $\delta>0$ and since the fitness functions, defined in \eqref{def ri}, satisfy $r_i(\x)\to-\infty$ as $\norm{\x}\to +\infty$, $\A$ can be seen as a cooperative Schr\"odinger operator with confining potentials, of which  the linear analysis is classical, see \cite{Agm-82,damascelli2013symmetry} in a bounded domain, or \cite{Car-09, cardoulis2015principal} in a slightly different setting. Another approach, used in~\cite{HamLavRoq20}, consists in defining $\lambda_H$ as the limit, as $R\to+\infty$, of the Dirichlet principal eigenvalue $\lambda_H^R$ in the open Euclidean ball $B(\mathcal{O},R)$ of center the origin
$$\mathcal{O}:=(0,\ldots,0)$$
and radius $R>0$. Similarly, $\Phi=(\varphi_1,\ldots,\varphi_H)$ can be defined as in~\cite{HamLavRoq20} as the locally uniform limit of the functions $\Phi^R$ (after suitable multiplication, say, with $\Phi^R_1(\mathcal{O})=1$). The functions $\Phi^R$ belong to $C^\infty_0(\overline{B(\mathcal{O},R)})^H$ from standard elliptic estimates, where~$C^\infty_0(\overline{B(\mathcal{O},R)})$ is the space of $C^\infty(\overline{B(\mathcal{O},R)})$ functions vanishing on $\partial B(\mathcal{O},R)$. It is also known that the functions $\varphi_i$'s are positive and decay exponentially as $\|\x\|\to+\infty$, thanks to the confining property of the potentials $r_i$. We thus consider the principal eigenvalue $\lambda_H$, the normalised\footnote{In the sequel, we say that $\Psi\in L^2(\R^n)^H$ is normalised whenever $\int _{\R^{n}}\Vert \Psi(\x)\Vert  ^2\,\md\x=1.$} and positive (that is, positive componentwise) principal eigenvector $\Phi:=(\varphi_1,\ldots,\varphi _H) \in\big(C^\infty_0(\R^n)\cap L^1(\R^n)\big)^H$ (with~$C^\infty_0(\R^n)$ being the space of~$C^\infty(\R^n)$ functions converging to~$0$ as~$\|\x\|\to+\infty$), satisfying
\begin{equation}\label{eq:eigenvalue_pb}
-\frac{\mu^2}{2}\Delta \varphi_i(\x)-r_i(\x) \, \varphi_i(\x)-  \delta \left(\sum \limits_{\substack{k=1 \\k \neq i}}^H\frac {\varphi_k(\x)}{H-1}-\varphi_i(\x) \right)=\lambda_H\, \varphi_i(\x)\ \hbox{ in }\R^n, \quad 1\leq i \leq H.
\end{equation}
We also know that $\Phi\in(H^1(\R^n)\cap L^2_{w}(\R^n))^H$, with
$$L^2_w(\R^n):=\left\{\psi:\R^n\to\R\hbox{ such that }\x\mapsto \Vert \x\Vert\,\psi(\x) \in L^2(\R^n)\right\},$$
and that the following Rayleigh formula is available:
\begin{equation}\label{Rayleigh}
\lambda _H=Q_H(\Phi)=\min\Big\{Q_H(\Psi): \Psi\in(H^1(\R^n)\cap L^2_{w}(\R^n))^H,\,\int _{\R^{n}}\!\!\Vert \Psi(\x)\Vert  ^2\,\md\x=1\Big\},
\end{equation}
where
\begin{equation}\label{eq:rayleigh_Q}\begin{array}{rcl}
Q_H(\Psi)=Q_H(\psi_1,\ldots,\psi_H) & \!\!\!:=\!\!\! & \ds\sum _{i=1}^{H} \lp \frac{\mu ^2}{2}\!\int_{\R^{n}}\!\|\nabla \psi_i(\x)\|^2\,\md\x -\!\int_{\R^{n}}\!r_i (\x)\,(\psi_i(\x))^2\, \md\x \rp\\
& & \ds+\,\delta\left(1-\sum_{1\le i< j\le H}\frac{2}{H-1}\int_{\R^n}\psi_i(\x)\,\psi_j(\x)\,\md\x\right).\end{array}
\end{equation}
Furthermore, the principal eigenvector $\Phi$ and its opposite $-\Phi$ are the unique normalised minima of $Q_H$ in $(H^1(\R^n)\cap L^2_{w}(\R^n))^H$, and $\Phi$ is the unique nonnegative (componentwise) eigenvector of $\mathcal{A}$ in $(H^1(\R^n)\cap L^2_{w}(\R^n))^H$.

The principal eigenvalue $\lambda_H$ depends on the parameters $\delta$, $\alpha$, $\mu$, $\rmax$ (on a trivial additional manner, since $\lambda_H+\rmax$ is independent of $\rmax$), as well as on the optima $(\opt_i)_{1\le i\le H}$. In the following sections, depending on the context, we will also use the notations $\lambda_H(\delta)$, $\lambda_H(\opt_1,\ldots,\opt_H)$, or $\lambda_H(\delta,\alpha,\mu,\opt_1,\ldots,\opt_H)$ in order to emphasise the effect of the various parameters on $\lambda_H$.

For the particular case $H=1$, we refer to Section \ref{sec:reference-case}.


\subsection{Extinction vs persistence}

Let $\ug=(u_1,\ldots,u_H)$ denote the solution of~\eqref{eq:sys H}-\eqref{ineqexp} given in Theorem~\ref{thm:well-pos}. Remember that~$u_i\ge0$ in $[0,+\infty)\times\R^n$, for each $1\le i\le H$. Assume without loss of generality that~$\ug^0\not\equiv(0,\ldots,0)$ in $\R^n$, that is, there is $1\le j\le H$ such that $u_j\not\equiv0$ in $\R^n$. The strong parabolic maximum principle applied to $u_j$ implies that $u_j>0$ in $(0,+\infty)\times\R^n$, and then~$u_i>0$ in $(0,+\infty)\times\R^n$ from the strong parabolic maximum principle applied to each $u_i$ with $i\neq j$. For $t\ge0$, we define the population size within each host $1 \leq i \leq H$ as
$$N_i(t):=\int_{\R^n} u_i(t,\y) \, \md\y,$$
which is a positive real number for $t>0$, by~\eqref{ineqexp} and the previous observations. Furthermore, the Lebesgue dominated convergence theorem implies that each function $N_i$ is continuous in $[0,+\infty)$. The total population size in the system is defined as
$$N(t):=N_1(t)+ \ldots + N_H(t),$$
and the mean growth rate within each host as
\begin{equation}\label{eq:def_rbar}
\ds\rb_i(t):=\frac{\int_{\R^n} r_i(\y) \, u_i(t,\y) \, \md \y }{\int_{\R^n} u_i(t,\y) \,  \md\y }=\frac{\int_{\R^n} r_i(\y) \, u_i(t,\y) \, \md \y }{N_i(t)}.
\end{equation}
By integrating~\eqref{eq:sys H} over $[t_1,t_2]\times B(\mathcal{O},R)$ for any $0<t_1<t_2<+\infty$ and $R>0$, by passing to the limit as $R\to+\infty$ and then as $t_2-t_1\to0$, and by using~\eqref{ineqexp} as in the proof of Theorem~\ref{thm:well-pos} in Section~\ref{sec51} (see especially~\eqref{integrals} below), it follows that the population sizes are differentiable in $(0,+\infty)$ and satisfy
$$N_i'(t)=\rb_i(t)N_i(t)-N_i(t)^2+\frac{\delta}{H-1}\sum\limits_{\substack{k=1 \\k \neq i}}^H(N_k(t)-N_i(t))$$
for every $1\le i\le H$ and $t>0$ (and even at $t=0$ by continuity as $t\to0$ in the above formula).

The next result shows that the sign of the principal  eigenvalue $\lambda_H$ fully determines the fate of the population at large times.

\begin{proposition}[Extinction vs persistence]\label{theo pers log}
Let  $\lambda_H$ be given by~\eqref{Rayleigh}, and let $\ug$ be the solution of~\eqref{eq:sys H}-\eqref{ineqexp} given by Theorem~$\ref{thm:well-pos}$, with a non-trivial initial condition $\ug^0$, let $N_i(t)$ be the corresponding population sizes in each habitat, and let $N(t)$ the total population size in the system.
\begin{enumerate}
\item [(i)] If $\lambda_H>0$ and $\ug^0$ is compactly supported, then
\[ N(t) \to 0 \text{ as }t\to+\infty\ \ (\text{\textit{extinction of the population}}).\]
\item [(ii)] If $\lambda_H=0$ and $\ug^0$ is compactly supported, then
\[\liminf_{t\to+\infty}\Big(\min_{1\le i\le H}N_i(t)\Big)=0\ \ (\text{\textit{partial extinction of the population}}).\]
\item [(iii)] If $\lambda_H<0$, then
 \end{enumerate}
\begin{equation}\label{persistence}
\limsup_{t\to+\infty} N(t)\!\ge\!\limsup_{t\to+\infty}\Big(\!\max_{1\le i\le H}N_i(t)\!\Big)\!\ge\!-\lambda_H\!>\!0\ (\text{\textit{persistence of the population}}).
\end{equation}
\end{proposition}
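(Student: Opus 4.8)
The first inequality in~\eqref{persistence} is free: since all $N_i$ are nonnegative, $N(t)=\sum_{i=1}^{H}N_i(t)\ge\max_{1\le i\le H}N_i(t)$ for every $t$, hence the same holds for the $\limsup$'s. For the rest, the plan is to compare $\ug$ with solutions of the \emph{local, cooperative, linear} system obtained by freezing the nonlocal terms $N_i(t)$ in~\eqref{eq:sys H}, using explicit sub/supersolutions built from the principal eigenpair $(\lambda_H,\Phi)$. A one-line computation from~\eqref{eq:eigenvalue_pb} shows that, for any constant $C$ and any $\sigma\in\R$, the function $(t,\x)\mapsto Ce^{\sigma t}\varphi_i(\x)$ solves $\partial_tv_i=\frac{\mu^2}{2}\Delta v_i+(r_i(\x)+\lambda_H+\sigma)v_i+\delta(\sum_{k\neq i}\frac{v_k}{H-1}-v_i)$. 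Since $u_i\ge0$, whenever $N_i(t)$ is bounded above (resp.\ below) by a constant on a time interval, $\ug$ is a subsolution (resp.\ supersolution) of one of these linear systems, and one then invokes the parabolic comparison principle for cooperative systems — legitimate in the growth class fixed by the a priori bound~\eqref{ineqexp}, or on a bounded ball with Dirichlet data.

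\textbf{Cases (i) and (ii).} Here $\ug^0$ is compactly supported, so, $\Phi$ being positive and continuous, one can pick $C>0$ with $u_i^0\le C\varphi_i$ on $\R^n$. Taking $\sigma=-\lambda_H\le0$ (so the exponential is nonincreasing) and using $(r_i-N_i)u_i\le r_iu_i$ (because $N_i\ge0$), $\ug$ is a subsolution of the linear system solved by $Ce^{-\lambda_Ht}\varphi_i$, so comparison from $t=0$ gives $0\le u_i(t,\x)\le Ce^{-\lambda_Ht}\varphi_i(\x)$ for all $t\ge0$. If $\lambda_H>0$, integrating in $\x$ and using $\varphi_i\in L^1(\R^n)$ gives $N_i(t)\le Ce^{-\lambda_Ht}\|\varphi_i\|_{L^1(\R^n)}\to0$, hence $N(t)\to0$, which is~(i). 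If $\lambda_H=0$ this only yields the uniform bound $u_i(t,\x)\le C\varphi_i(\x)$; to prove~(ii) I would argue by contradiction, assuming $\liminf_{t\to+\infty}\min_iN_i(t)>0$, i.e.\ $N_i(t)\ge\eta>0$ for all $i$ and all $t\ge T$, for some $\eta,T>0$. Then for $t\ge T$, $(r_i-N_i)u_i\le(r_i-\eta)u_i$, so $\ug$ is a subsolution on $[T,+\infty)$ of the linear system with shift $\sigma=-\eta$; since $u_i(T,\cdot)\le C\varphi_i$ by the uniform bound, comparison gives $u_i(t,\x)\le Ce^{-\eta(t-T)}\varphi_i(\x)$ for $t\ge T$, whence $N_i(t)\to0$, contradicting $N_i(t)\ge\eta$.

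\textbf{Case (iii).} No decay of $\ug^0$ is assumed; since $\ug^0\not\equiv0$, the strong parabolic maximum principle gives $u_i>0$ in $(0,+\infty)\times\R^n$. Suppose, for contradiction, that $\limsup_{t\to+\infty}\max_iN_i(t)<-\lambda_H$, so $N_i(t)\le-\lambda_H-2\eta$ for all $i$ and all $t\ge T$, for some $\eta,T>0$. Then for $t\ge T$, $(r_i-N_i)u_i\ge(r_i+\lambda_H+2\eta)u_i$, so $\ug$ is now a \emph{supersolution} of the linear system with shift $\sigma=\lambda_H+2\eta$. Instead of $\Phi$ (which decays too fast — like a Gaussian, the potentials $r_i$ being confining — to be dominated from below on all of $\R^n$), I would use a Dirichlet truncation: since $\lambda_H^R\to\lambda_H$, fix $R$ so large that $\lambda_H^R<\lambda_H+\eta$, let $\Phi^R=(\varphi_1^R,\dots,\varphi_H^R)$ be the positive Dirichlet principal eigenvector of $\A$ on $B(\mathcal{O},R)$, and set $\underline v_i(t,\x):=\epsilon\,e^{\beta(t-T)}\varphi_i^R(\x)$ with $\beta:=\lambda_H+2\eta-\lambda_H^R>\eta>0$. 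A computation with the Dirichlet eigenvalue equation shows $\underline v$ solves the above linear system in $B(\mathcal{O},R)$; it vanishes on $\partial B(\mathcal{O},R)$, where $u_i>0$; and, $u_i(T,\cdot)$ being continuous and positive on the compact $\overline{B(\mathcal{O},R)}$, one has $\underline v_i(T,\cdot)\le u_i(T,\cdot)$ for $\epsilon>0$ small enough. The comparison principle on the bounded cylinder then gives $u_i(t,\x)\ge\epsilon\,e^{\beta(t-T)}\varphi_i^R(\x)$ for $t\ge T$, $\x\in\overline{B(\mathcal{O},R)}$; integrating over $B(\mathcal{O},R)$ yields $N_i(t)\ge\epsilon\,e^{\beta(t-T)}\int_{B(\mathcal{O},R)}\varphi_i^R\to+\infty$, contradicting $N_i(t)\le-\lambda_H-2\eta$. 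Hence $\limsup_{t\to+\infty}\max_iN_i(t)\ge-\lambda_H$, and~\eqref{persistence} follows.

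\textbf{Main obstacle.} The algebra producing exact solutions of the frozen linear systems from the eigenpairs is routine; the two genuinely delicate points are (a) applying the comparison principle for the cooperative systems on $\R^n$ in a growth class compatible with~\eqref{ineqexp} (and on bounded balls with Dirichlet data), and (b) in~(iii), the passage from $\Phi$ to a Dirichlet eigenfunction on a large ball via $\lambda_H^R\to\lambda_H$, where positivity of $u_i(T,\cdot)$ from the strong maximum principle supplies the needed pointwise lower bound — this is exactly what lets persistence be detected with no decay hypothesis on $\ug^0$, unlike in~(i)--(ii) where one dominates from $t=0$.
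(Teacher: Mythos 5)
Your proposal is correct and follows essentially the same route as the paper: in (i)--(ii) you dominate $\ug$ from $t=0$ (resp.\ from a time $T$, arguing by contradiction) by exact solutions $C\,e^{\sigma t}\Phi$ of the frozen linear cooperative system, and in (iii) you argue by contradiction using the Dirichlet approximation $\lambda_H^R\to\lambda_H$ and a growing subsolution $\epsilon\,e^{\beta(t-T)}\Phi^R$ on a large ball, with positivity of $u_i(T,\cdot)$ from the strong maximum principle supplying the initial comparison. This matches the paper's proof step for step (up to the choice of constants $\eta$ versus $\varepsilon/2$), so no further comment is needed.
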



\subsection{Stationary states}

In this subsection, we are interested in positive bounded stationary states of~\eqref{eq:sys H}. By posi\-tive, we mean positive componentwise in $\R^n$ since, from the strong elliptic maximum principle applied to this cooperative system, if a bounded stationary state is nonnegative componentwise in $\R^n$, it is either positive componentwise, or identically $0$ in $\R^n$. From the confining properties of the fitnesses $r_i$, any positive bounded stationary state of~\eqref{eq:sys H} necessarily decays to $0$ exponentially as $\|\x\|\to+\infty$. Therefore, it follows from Theorem~\ref{thm:well-pos} and the proof of Proposition~\ref{theo pers log} that~\eqref{eq:sys H} has no positive bounded stationary state if $\lambda_H\ge0$ (see Remark~\ref{remstat} below for further details).

Furthermore, it turns out that, when persistence occurs, that is, $\lambda_H<0$, the nature of the stationary states deeply depends on the number of hosts. Namely, when $H=1,\, 2$, the stationary states are proportional to the principal eigenfunctions (and the study of their ``shapes'' has recently received a lot of attention, see e.g.   \cite{AlfVer19,DjiDucFab17,Mir17,MirGan20}). This is no longer true in general in the case $H>2$, due to a possible symmetry breaking. More precisely, we have the following results.

\begin{proposition}[Stationary states]\label{prop:statio_states}
\noindent (i) Assume that $H=1$ and $\lambda_1<0$. The positive bounded stationary state of~\eqref{eq:sys H} is unique and is given by:
$$\ds p_1(\x)= -\lambda_1 \frac{\varphi_1(\x)}{\int_{\R^n}\varphi_1(\y)\,\md\y},\ \ \x\in \R^n.$$
\noindent (ii) Assume that $H=2$ and $\lambda_2<0.$ Then the principal eigenvector $(\varphi_1,\varphi_2)$ of~$\mathcal{A}$ in~$(H^1(\R^n)\cap L^2_w(\R^n))^2$ satisfies $\int_{\R^n}\varphi_1(\y)\,\md\y=\int_{\R^n}\varphi_2(\y)\,\md\y$, and
\begin{equation}\label{eq:2hosts_statio_state}
\x\mapsto(p_1(\x),p_2(\x)):=\lp-\lambda_2 \frac{\varphi_1(\x)}{\int_{\R^n}\varphi_1(\y)\,\md\y} ,\ -\lambda_2 \frac{\varphi_2(\x)}{\int_{\R^n}\varphi_2(\y)\,\md\y} \rp
\end{equation}
is a positive bounded stationary state of \eqref{eq:sys H} proportional to $(\varphi_1,\varphi_2)$, and satisfying $\int_{\R^n}p_1(\x)\,\md\x=\int_{\R^n}p_2(\x)\,\md\x=-\lambda_2$.\\
\noindent (iii) Assume that $H>2$ and $\lambda_H<0$. Then positive bounded stationary states of~\eqref{eq:sys H}, if any, are in general not proportional to the principal eigenvector $(\varphi_1,\ldots,\varphi_H)$ of $\mathcal A$.
\end{proposition}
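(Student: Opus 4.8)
The plan is to treat the three parts in increasing order of difficulty. For part (i), when $H=1$ the system reduces to the scalar equation $-(\mu^2/2)\Delta p_1 = (r_1(\x)-N_1)p_1$ with $N_1=\int_{\R^n}p_1$, so a positive bounded solution is, by the simplicity and uniqueness characterisation of the principal eigenfunction recalled in Section~\ref{sec:existence}, necessarily a multiple $c\,\varphi_1$ of $\varphi_1$ with $\lambda_1 = -N_1 = -c\int_{\R^n}\varphi_1$; solving for $c$ gives $c = -\lambda_1/\int_{\R^n}\varphi_1$, hence the formula, and conversely this $p_1$ is indeed a solution. For part (ii), I would first establish the mass-balance identity $\int_{\R^n}\varphi_1 = \int_{\R^n}\varphi_2$: integrate each of the two eigenvalue equations in~\eqref{eq:eigenvalue_pb} over $\R^n$ (the Laplacian terms vanish by the exponential decay of $\varphi_i$ and its gradient), obtaining $-\!\int r_i\varphi_i - \delta(\varphi_j$-integral $-\varphi_i$-integral$) = \lambda_2\int\varphi_i$ for $\{i,j\}=\{1,2\}$; subtracting the two relations kills the symmetric $\int r_i\varphi_i$ contribution only after using the extra symmetry available for $H=2$ — here instead I would exploit that for $H=2$ the system has the reflection symmetry exchanging the two optima, so $(\varphi_2,\varphi_1)$ (with $\opt_1,\opt_2$ swapped) is again the normalised positive eigenvector; combined with uniqueness this forces the two masses to coincide. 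Once $\int_{\R^n}\varphi_1 = \int_{\R^n}\varphi_2 =: m$, plugging $(p_1,p_2) = (-\lambda_2\varphi_1/m,\,-\lambda_2\varphi_2/m)$ into~\eqref{eq:sys H} (stationary, with $f_i$ as in~\eqref{eq:f_type2}) and using $\int_{\R^n}p_i = -\lambda_2$ for both $i$ turns the nonlinear stationary system into the linear eigenvalue system~\eqref{eq:eigenvalue_pb}, which holds by construction; positivity and boundedness are inherited from $\varphi_i$.

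For part (iii), the claim is a genericity (``in general not proportional'') statement, so it suffices to exhibit one configuration with $H>2$, $\lambda_H<0$, admitting a positive bounded stationary state that is not a scalar multiple of $\Phi=(\varphi_1,\ldots,\varphi_H)$ — or, dually, to show that for generic optima no multiple of $\Phi$ can be a stationary state. I would pursue the latter: suppose $(c\varphi_1,\ldots,c\varphi_H)$ solves the stationary version of~\eqref{eq:sys H}. Comparing with~\eqref{eq:eigenvalue_pb} componentwise, the nonlinear term $c^2\varphi_i\int_{\R^n}\varphi_i$ would have to equal $-c\lambda_H\varphi_i$ for each $i$, i.e. $c\int_{\R^n}\varphi_i = -\lambda_H$ for every $i$; this is possible only if all the masses $\int_{\R^n}\varphi_i$ are equal. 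Thus the obstruction is precisely whether $\int_{\R^n}\varphi_1 = \cdots = \int_{\R^n}\varphi_H$. The heart of part (iii) is therefore to show that for $H\ge 3$ this equality fails for a generic (in fact, I expect, an open dense) set of optima $(\opt_1,\ldots,\opt_H)$: intuitively, a host whose optimum $\opt_i$ is ``closer to the others'' receives more fit migrants and carries more mass. A clean way to make this rigorous is a perturbative/continuity argument: start from a symmetric configuration where all masses agree by symmetry, then differentiate the masses $\int_{\R^n}\varphi_i$ with respect to the position of one optimum (using differentiability of the principal eigenpair, via the implicit function theorem applied to the Dirichlet problems $\A^R\Phi^R = \lambda_H^R\Phi^R$ and passing to the limit $R\to\infty$ with the exponential decay bounds), and check that the derivative does not preserve equality of all $H$ masses. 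Combined with the fact that, when $\int_{\R^n}\varphi_i$ are \emph{not} all equal, a positive bounded stationary state may still exist but cannot be proportional to $\Phi$, this yields the statement. (One then still owes the reader at least one explicit example where a nonproportional positive stationary state genuinely exists — this can be obtained by a sub/supersolution or fixed-point construction as in Section~\ref{sec:existence}, taking as competing profiles rescaled copies of the $\varphi_i$ with host-dependent rescaling constants $c_i$ chosen to solve the algebraic mass-balance system $c_i\,(\text{mass}_i) = -\lambda_H + (\text{migration corrections})$.)

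The main obstacle is part (iii), and within it the quantitative claim that the masses $\int_{\R^n}\varphi_i$ generically differ for $H\ge 3$. The delicate point is that one cannot simply appeal to symmetry: for $H=2$ the pairwise reflection symmetry forces equal masses, and the proof of the ``no symmetry breaking'' fails for $H\ge 3$ precisely because no such global symmetry acts transitively on the hosts for generic optima. I expect the cleanest route is the perturbation-from-a-symmetric-state computation sketched above; the technical cost is justifying differentiability of $(\lambda_H,\Phi)$ in the optima on the whole of $\R^n$ (not just on balls) and controlling the limit $R\to\infty$, which the exponential decay estimates already recalled in Section~\ref{sec:existence} should make routine. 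An alternative, if one only wants the ``in general'' phrasing, is to produce a single explicit asymmetric example numerically-guided and verified analytically, which sidesteps the genericity machinery entirely.
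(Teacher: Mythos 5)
Your parts (i) and (ii) are correct and essentially the paper's own argument: in (i) the reduction to the principal eigenpair (after noting that a positive bounded stationary state decays exponentially, hence lies in $H^1(\R^n)\cap L^2_w(\R^n)$, so uniqueness applies), and in (ii) the reflection symmetry giving $\varphi_2=\varphi_1\circ\iota$ and hence equal masses, followed by rescaling the eigenvalue equations. The gap is in part (iii). You correctly reduce the question to whether the masses $\int_{\R^n}\varphi_i$ are all equal — this is exactly the paper's first step, since $c\,\Phi$ stationary forces $c\int_{\R^n}\varphi_i=-\lambda_H$ for every $i$ — but the decisive claim, that this equality actually fails for some admissible configuration with $H\ge3$, is only announced, not proved. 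Your perturbation-from-symmetry plan leaves unverified precisely the point on which everything hinges: the differentiability of the principal eigenpair with respect to the optima on all of $\R^n$ (through the limit $R\to+\infty$), and, more importantly, the computation showing that the first-order variation of the vector of masses does not preserve equality; a priori that derivative could vanish or respect a residual symmetry, so ``check that the derivative does not preserve equality'' is the theorem, not a remark. Also, the closing promise to construct an actual non-proportional positive stationary state is not required: the statement is of the form ``if any'', so it suffices to exhibit one configuration for which no positive multiple of $\Phi$ is stationary.

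The paper avoids any differentiability in the optima by an explicit asymptotic argument: it takes $\opt_1=(-\beta,0,\ldots,0)$ and $\opt_2=\cdots=\opt_H=(\beta,0,\ldots,0)$ (so $\varphi_2=\cdots=\varphi_H$ by uniqueness) and argues by contradiction for $\beta$ large. If a positive multiple of $\Phi$ were stationary, all masses would be equal (normalised to $1$); translating the eigenfunctions to each optimum and letting $\beta\to+\infty$, an $L^1$-based compactness argument (bounds on $\Delta\phi_m$ in $L^1_{loc}$, tightness from the second-moment bound coming from integrating the equation, then elliptic regularity and decay of the limit) shows that the limit problem seen from $\opt_1$ forces the limiting eigenvalue to be $\lambda_1+\delta$, while the same analysis seen from $\opt_2$ forces it to be $\lambda_1+\delta/(H-1)$, which is impossible for $H\ge3$ since $\delta>0$. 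To complete your proof you must either carry out and verify the nondegenerate derivative computation at a symmetric configuration (with the attendant differentiability justification), or replace that step by a concrete argument of this asymptotic type.
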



\section{Effect of the main parameters on persistence}\label{sec:effect}

In this section, we investigate the dependence of $\lambda_H$ with respect to the parameters $\delta>0$, $\alpha>0$, $\mu>0$, and the optima $(\opt_i)_{1\le i\le H}\in(\R^n)^H$. It is useful to start with the ``reference case'', namely $H=1$.


\subsection{The reference case with a single host}\label{sec:reference-case}

In that case of a single host at position $\opt _1$, there is no transfer rate, that is,~\eqref{eq:eigenvalue_pb} reduces to a single equation with $\delta=0$. The principal eigenpair $(\lambda_1,G_1)$ solving
\begin{align}\label{spectral  H=1}
-\frac{\mu^2}{2}\Delta G_1(\x)-r_1(\x)\,G_1(\x)=\lambda_1\,G_1(\x)\ \hbox{ in }\R^n,
\end{align}
is explicitly given by
\begin{equation}\label{def:gaussian}
\lambda_1=\lambda_1(\alpha,\mu)=\frac{\mu n\sqrt{\alpha}}{2}-\rmax, \quad G_1(\x)=G(\x-\opt_1),\quad G(\y):=C_G\,e^{-\sqrt{\alpha}\|\y\|^2/(2\mu)},
\end{equation}
where $C_G>0$ is a normalising positive constant that guarantees that $\int_{\R^n} G^2(\y) d\y =1$. Notice that, as the problem~\eqref{spectral  H=1} is left invariant by any translation of the phenotypic space, the eigenvalue $\lambda_1$ does not depend on the position of the optimum $\opt_1$.


\subsection{The case of $H$ hosts with $H\ge2$}

In this subsection, we assume $H\geq 2$ and denote $\lambda_H=\lambda_H(\delta,\alpha,\mu,\opt_1,\ldots,\opt_H)$ the principal eigenvalue of $\mathcal{A}$ in $(H^1(\R^n)\cap L^2_w(\R^n))^H$, depending on $\delta>0$, $\alpha>0$, $\mu>0$, and $(\opt_i)_{1\le i\le H}$. As above, the corresponding normalised eigenvector is denoted $\Phi=(\varphi_1,\ldots,\varphi_H)$. Observe that, if we consider~\eqref{eq:eigenvalue_pb} with $\delta=0$, the $H$ equations are decoupled and we then define
\begin{equation}\label{deflambdaH0}
\lambda_H(0,\alpha,\mu,\opt_1,\ldots,\opt_H):=\lambda_1(\alpha,\mu)=\frac{\mu n\sqrt{\alpha}}{2}-\rmax.
\end{equation}
For later use, we define, for $1\le i\le H$,
$$G_i(\x):=G(\x-\opt_i)=C_G\,e^{-\sqrt{\alpha}\|\x-\opt_i\|^2/(2\mu)}$$
and it is convenient to straightforwardly compute
\begin{equation}\label{prod-sca-gauss}
\int _{\R^n}G_i(\x)\,G_j(\x)\,\md\x=e^{-\sqrt{\alpha}\,\|\opt_i-\opt_j\|^2/(4\mu)}.
\end{equation}

The following first result of this section asserts that the principal eigenvalue depends continuously on the parameters, and that an increase of the migration rate $\delta>0$, or of the intensity of selection $\alpha>0$, or of the mutational parameter $\mu>0$ reduces the chances of persistence of the population.

\begin{proposition}[Continuity and monotonicity w.r.t. parameters]\label{prop monolambda}
Let $H\geq 2$ be given. The function $(\delta,\alpha,\mu,\opt_1,\ldots,\opt_H)\mapsto\lambda_H(\delta,\alpha,\mu,\opt_1,\ldots,\opt_H)$ is continuous in $[0,+\infty)\times(0,+\infty)^2\times(\R^n)^H$, and concave with respect to $(\delta,\alpha,\mu)\in[0,+\infty)\times(0,+\infty)^2$. Furthermore, $\lambda_H(\delta,\alpha,\mu,\opt_1,\ldots,\opt_H)$ is increasing with respect to $\alpha>0$ and to $\mu>0$. Lastly, either the optima $\opt_i$ are all identical and $\lambda_H(\delta,\alpha,\mu,\opt_1,\ldots,\opt_H)=\lambda_1(\alpha,\mu)=\mu\,n\,\sqrt{\alpha}/2-\rmax$ for all $\delta\ge0$, or $\lambda_H(\delta,\alpha,\mu,\opt_1,\ldots,\opt_H)$ is increasing with respect to $\delta\ge0$.
\end{proposition}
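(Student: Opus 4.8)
\emph{Overall strategy.} Everything is read off from the Rayleigh characterisation~\eqref{Rayleigh}--\eqref{eq:rayleigh_Q} of $\lambda_H$ as the minimum of $Q_H$ over normalised $\Psi\in(H^1(\R^n)\cap L^2_w(\R^n))^H$, combined with the componentwise strict positivity of the principal eigenvector $\Phi=(\varphi_1,\ldots,\varphi_H)$ and the uniqueness of the nonnegative eigenvector of $\A$. For the continuity, first observe that for each fixed normalised $\Psi$ the map $(\delta,\alpha,\mu,\opt_1,\ldots,\opt_H)\mapsto Q_H(\Psi)$ is affine in $\delta$, affine in $\alpha$, polynomial in $\mu$ and smooth in each $\opt_i$ (the second moments $\int_{\R^n}\|\x\|^2\psi_i^2\,\md\x$ being finite since $\Psi\in L^2_w(\R^n)^H$), hence continuous, so $\lambda_H=\inf_\Psi Q_H(\Psi)$ is upper semicontinuous. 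For lower semicontinuity I would use that, along a converging sequence of parameters, the normalised minimisers are bounded in $(H^1\cap L^2_w)^H$ by the (locally uniform) coercivity of $Q_H$ and satisfy uniform exponential tail bounds stemming from the uniformly confining $r_i$, hence converge up to extraction weakly in $H^1$ and strongly in $L^2$, and one passes to the limit by weak lower semicontinuity of $Q_H$; equivalently, one passes to the limit in $\lambda_H=\lim_{R\to+\infty}\lambda_H^R$, the Dirichlet eigenvalues on the fixed ball $B(\mathcal O,R)$ being continuous in the parameters and converging locally uniformly.

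\emph{Monotonicity in $\alpha,\mu$ and concavity in $\delta,\alpha$.} For fixed $\Psi$, \eqref{eq:rayleigh_Q} gives $\partial_\alpha Q_H(\Psi)=\tfrac12\sum_i\int_{\R^n}\|\x-\opt_i\|^2\psi_i^2\,\md\x\ge0$ and $\partial_\mu Q_H(\Psi)=\mu\sum_i\int_{\R^n}\|\nabla\psi_i\|^2\,\md\x\ge0$, while $Q_H(\Psi)$ is affine in $\delta$ (with slope $1-\sum_{1\le i<j\le H}\tfrac{2}{H-1}\int_{\R^n}\psi_i\psi_j\,\md\x$) and affine in $\alpha$. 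Being an infimum of affine functions of $\delta$, resp.\ of $\alpha$, the eigenvalue $\lambda_H$ is concave in $\delta$ and in $\alpha$. For the strict monotonicity in $\alpha$: if $\alpha_2>\alpha_1$ and $\Phi$ is the minimiser at $\alpha_2$, then $\lambda_H(\alpha_2)=Q_{H,\alpha_2}(\Phi)=Q_{H,\alpha_1}(\Phi)+\tfrac{\alpha_2-\alpha_1}{2}\sum_i\int_{\R^n}\|\x-\opt_i\|^2\varphi_i^2\,\md\x>Q_{H,\alpha_1}(\Phi)\ge\lambda_H(\alpha_1)$, the strict inequality because $\Phi>0$ componentwise; the same computation with $\partial_\mu$ and the fact that the positive eigenvector is nonconstant (so $\sum_i\int\|\nabla\varphi_i\|^2>0$) gives the strict monotonicity in $\mu$.

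\emph{Concavity in $\mu$ --- the main difficulty.} The previous argument does not apply here: $Q_H(\Psi)$ is affine in $\mu^2$, hence convex, not affine, in $\mu$. My plan is the Gaussian dilation $\psi_i(\x)=\mu^{-n/4}\chi_i\!\big((\x-\opt_i)/\sqrt\mu\big)$, which preserves the normalisation and turns the diagonal part $\sum_i\big(\tfrac{\mu^2}{2}\int\|\nabla\psi_i\|^2-\int r_i\psi_i^2\big)$ into an affine function of $\mu$; the price is that the migration coupling $\sum_{i<j}\int\psi_i\psi_j$ becomes $\sum_{i<j}\int_{\R^n}\chi_i(\y)\,\chi_j\!\big(\y+(\opt_i-\opt_j)/\sqrt\mu\big)\,\md\y$, which is $\mu$-dependent. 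Proving that this residual coupling term does not destroy concavity --- by differentiating $\lambda_H$ twice at the minimiser, normalising the first moments via the translation invariance $\lambda_H(\opt_1+w,\ldots,\opt_H+w)=\lambda_H(\opt_1,\ldots,\opt_H)$, and exploiting the sign and exponential decay of the $\varphi_i$ --- is the step I expect to be the main obstacle.

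\emph{The dichotomy in $\delta$.} If $\opt_1=\cdots=\opt_H$, then $(G_1,\ldots,G_1)$ with $G_1$ as in~\eqref{def:gaussian} solves~\eqref{eq:eigenvalue_pb} with eigenvalue $\lambda_1$ (the migration term $\sum_{k\ne i}\varphi_k/(H-1)-\varphi_i$ vanishes); being a nonnegative eigenvector of $\A$ it is proportional to $\Phi$, so $\lambda_H=\lambda_1$ for all $\delta\ge0$, consistently with~\eqref{deflambdaH0}. If the $\opt_i$ are not all equal, fix $\delta_2>\delta_1\ge0$; using the minimiser $\Phi_{\delta_2}$ at $\delta_2$ as a competitor at $\delta_1$ and the affine dependence on $\delta$ yields $\lambda_H(\delta_2)-\lambda_H(\delta_1)\ge(\delta_2-\delta_1)\,b(\Phi_{\delta_2})$, where $b(\Psi):=1-\sum_{1\le i<j\le H}\tfrac{2}{H-1}\int_{\R^n}\psi_i\psi_j\,\md\x$. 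From $\varphi_i\varphi_j\le\tfrac12(\varphi_i^2+\varphi_j^2)$ and $\sum_i\int\varphi_i^2=1$ one gets $\sum_{i<j}\tfrac{2}{H-1}\int\varphi_i\varphi_j\le\tfrac{1}{H-1}\sum_{i<j}\int(\varphi_i^2+\varphi_j^2)=\sum_i\int\varphi_i^2=1$, hence $b(\Phi)\ge0$; and $b(\Phi)=0$ would force $\varphi_i\equiv\varphi_j$ for all $i,j$, so~\eqref{eq:eigenvalue_pb} would read $-\tfrac{\mu^2}{2}\Delta\varphi_1-r_i\varphi_1=\lambda_H\varphi_1$ for every $i$, i.e.\ $(r_i-r_j)\varphi_1\equiv0$; since $\varphi_1>0$ this means $r_i\equiv r_j$, hence $\opt_i=\opt_j$ for all $i,j$, a contradiction. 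Therefore $b(\Phi_{\delta_2})>0$ and $\lambda_H$ is strictly increasing in $\delta$, which completes the dichotomy.
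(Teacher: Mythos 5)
Your continuity argument, the strict monotonicities in $\alpha$ and $\mu$, and the dichotomy in $\delta$ are correct and close to the paper's own proof. For continuity the paper likewise extracts a subsequence of normalised minimisers, bounded in $(H^1(\R^n)\cap L^2_w(\R^n))^H$, upgrades $L^2_{loc}$ to strong $L^2$ convergence via the uniform second-moment bound, and identifies the limit -- through the eigenvalue equation and uniqueness of the principal eigenpair rather than weak lower semicontinuity of $Q_H$, an immaterial difference -- and it settles the boundary $\delta=0$ with the sandwich \eqref{ineqs}, which your upper semicontinuity plus the trivial bound $\lambda_H\ge\lambda_1$ also gives. The monotonicity in $\alpha$ and $\mu$ is the same test-function computation as in the paper. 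For the strict monotonicity in $\delta$ your route is in fact leaner: the paper first proves that $\delta\mapsto\lambda_H(\delta)$ is differentiable with $\lambda_H'(\delta)>0$ (Proposition \ref{prop:fitness-gain-small-delta}), using exactly the equality case of Young's inequality that you invoke, whereas testing the minimiser at $\delta_2$ in the quotient at $\delta_1$ yields the strict inequality without any differentiability. The identical-optima case is treated in the paper as you do, via uniqueness of the nonnegative eigenvector.

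The one genuine gap is the one you flag yourself: concavity with respect to $\mu$ (hence joint concavity in $(\delta,\alpha,\mu)$) is not established -- and you should know that the paper does not really establish it either. The paper's proof disposes of the whole concavity claim in one line by asserting that $Q_H(\Psi)$ in \eqref{eq:rayleigh_Q} is affine in $(\delta,\alpha,\mu)$; as you correctly observe, the $\mu$-dependence is through $\mu^2$, so what the infimum-of-affine-functions argument actually yields is joint concavity in $(\delta,\alpha,\mu^2)$. (Incidentally, you only record separate concavity in $\delta$ and in $\alpha$; joint affinity of $Q_H(\Psi)$ in $(\delta,\alpha,\mu^2)$ gives the joint statement for free, and you should say so since the proposition claims a joint property.) Moreover, do not expect your Gaussian-dilation plan to close the gap, because concavity in $\mu$ itself is incompatible with the paper's own formula \eqref{formulelambda'} when the optima are distinct: for $H=2$ one has $\lambda_2(0)=\lambda_1(\alpha,\mu)$ by \eqref{deflambdaH0} and $\lambda_2'(0)=1-e^{-\sqrt{\alpha}\beta^2/\mu}$; since $\lambda_1(\alpha,\mu)=\mu n\sqrt{\alpha}/2-\rmax$ is affine in $\mu$, concavity of $\mu\mapsto\lambda_2(\delta,\alpha,\mu)$ for every $\delta>0$ would force the pointwise limit $\mu\mapsto\big(\lambda_2(\delta)-\lambda_2(0)\big)/\delta\to 1-e^{-\sqrt{\alpha}\beta^2/\mu}$ (as $\delta\to0^+$) to be concave in $\mu$, yet this function is strictly convex on $\big(\sqrt{\alpha}\beta^2/2,+\infty\big)$. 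So the honest assessment is that your proposal proves everything in the statement except the $\mu$-part of the concavity claim, and that this part should be read -- and can only be obtained by the Rayleigh argument -- as concavity with respect to $(\delta,\alpha,\mu^2)$.
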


The following bounds on $\lambda_H$ shows, in particular, that the case with many hosts is always less favourable for the population than the case with a single host.

\begin{proposition}[Bounds on $\lambda_H$ w.r.t. $\lambda_1$]\label{prop bound lambda}
Let $H\geq 2$ be given. Then, for all $\delta>0$, $\alpha>0$, $\mu>0$, and $(\opt_i)_{1\le i\le H}\in(\R^n)^H$,
\begin{equation}\label{ineqs}\begin{array}{rcl}
\ds\frac{\mu n\sqrt{\alpha}}{2}\!-\!\rmax\!=\!\lambda_1(\alpha,\mu) & \!\!\!\leq\!\!\! & \ds\lambda _1(\alpha,\mu)+\delta\left(1-\sum_{1\le i< j\le H}\frac{2}{H-1}\int_{\R^n}\varphi_i(\x)\varphi_j(\x)\md\x\right)\vspace{3pt}\\
& \!\!\!\leq\!\!\! & \ds\lambda _H(\delta,\alpha,\mu,\opt_1,\ldots,\opt_H)\vspace{3pt}\\
& \!\!\!\leq\!\!\! & \ds\lambda _1(\alpha,\mu)\!+\!\delta\!\left(\!1-\!\!\sum_{1\le i< j\le H}\!\frac{2}{H(H\!-\!1)}\!\int_{\R^n}\!\!G_i(\x)G_j(\x)\md\x\!\right)\vspace{3pt}\\
& \!\!\!\le\!\!\! & \ds\lambda_1(\alpha,\mu)+\delta=\frac{\mu n\sqrt{\alpha}}{2}-\rmax+\delta.\end{array}
\end{equation}
Moreover, for every $\mu>0$, $\alpha>0$ and $(\opt_i)_{1\le i\le H}\in(\R^n)^H$, $\lambda_H(\delta,\alpha,\mu,\opt_1,\ldots,\opt_H)$ is bounded independently of $\delta$.
\end{proposition}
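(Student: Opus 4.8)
Everything follows from the Rayleigh formula \eqref{Rayleigh}--\eqref{eq:rayleigh_Q} for $\lambda_H$ together with the scalar variational characterisation of $\lambda_1(\alpha,\mu)$: since $\lambda_1$ does not depend on the position of the optimum, for every $1\le i\le H$ and every $\psi\in H^1(\R^n)\cap L^2_w(\R^n)$ one has $\frac{\mu^2}{2}\int_{\R^n}\|\nabla\psi\|^2\,\md\x-\int_{\R^n}r_i\,\psi^2\,\md\x\ge\lambda_1(\alpha,\mu)\int_{\R^n}\psi^2\,\md\x$, with equality when $\psi$ is collinear with the Gaussian $G_i$ of \eqref{def:gaussian}. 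The plan is to read off the two inner inequalities of \eqref{ineqs} by evaluating $Q_H$ at two explicit normalised $H$-tuples, and the two outer ones by elementary pointwise inequalities on the off-diagonal terms.

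For the lower bound on $\lambda_H$, evaluate $Q_H$ at the normalised eigenvector $\Phi=(\varphi_1,\dots,\varphi_H)$ itself: bounding the diagonal part from below by the scalar inequality above applied to each $\varphi_i$ and using $\sum_i\int\varphi_i^2=1$ gives $\sum_{i=1}^H\big(\frac{\mu^2}{2}\int\|\nabla\varphi_i\|^2-\int r_i\varphi_i^2\big)\ge\lambda_1(\alpha,\mu)$, hence $\lambda_H=Q_H(\Phi)\ge\lambda_1(\alpha,\mu)+\delta\big(1-\sum_{1\le i<j\le H}\tfrac{2}{H-1}\int\varphi_i\varphi_j\big)$. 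For the upper bound, test $Q_H$ against $\Psi:=\frac1{\sqrt H}(G_1,\dots,G_H)$, which is normalised because $\int G_i^2=1$ for each $i$; since each $G_i$ solves $-\frac{\mu^2}{2}\Delta G_i-r_iG_i=\lambda_1 G_i$, the diagonal part of $Q_H(\Psi)$ equals $\frac1H\sum_{i=1}^H\lambda_1=\lambda_1(\alpha,\mu)$ and the off-diagonal part is $\delta\big(1-\sum_{i<j}\tfrac2{H(H-1)}\int G_iG_j\big)$, so $\lambda_H\le Q_H(\Psi)=\lambda_1(\alpha,\mu)+\delta\big(1-\sum_{i<j}\tfrac2{H(H-1)}\int G_iG_j\big)$. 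The first inequality of \eqref{ineqs} comes from $2\varphi_i\varphi_j\le\varphi_i^2+\varphi_j^2$: summing over $i<j$ (using $\sum_{i<j}(a_i+a_j)=(H-1)\sum_i a_i$) gives $\frac2{H-1}\sum_{i<j}\int\varphi_i\varphi_j\le\sum_i\int\varphi_i^2=1$, so the bracket is nonnegative and $\delta>0$ yields $\lambda_1\le\lambda_1+\delta(1-\cdots)$; the last inequality is immediate since $\int G_iG_j>0$ (as $G_i>0$), whence the bracket is $\le1$, and $\lambda_1(\alpha,\mu)+\delta=\mu n\sqrt\alpha/2-\rmax+\delta$ by \eqref{def:gaussian}.

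It remains to bound $\lambda_H$ uniformly in $\delta$, which does \emph{not} follow from the chain above, whose upper term $\lambda_1+\delta\big(1-\sum_{i<j}\tfrac2{H(H-1)}\int G_iG_j\big)$ grows linearly in $\delta$ as soon as the optima are not all equal. The remedy is to test $Q_H$ at a normalised $H$-tuple with \emph{all components equal}, say $\Psi_\star:=\frac1{\sqrt H}(G_1,\dots,G_1)$: then $\sum_{1\le i<j\le H}\frac2{H-1}\int(\Psi_\star)_i(\Psi_\star)_j=\frac2{H-1}\binom H2\cdot\frac1H=1$, so the whole $\delta$-dependent term of $Q_H(\Psi_\star)$ vanishes and
\[
\lambda_H\le Q_H(\Psi_\star)=\frac1H\sum_{i=1}^H\Big(\frac{\mu^2}2\int_{\R^n}\|\nabla G_1\|^2\,\md\x-\int_{\R^n}r_i\,G_1^2\,\md\x\Big)=\lambda_1(\alpha,\mu)+\frac{\alpha}{2H}\sum_{i=2}^H\|\opt_1-\opt_i\|^2,
\]
where the last equality uses $\int_{\R^n}(r_1-r_i)\,G_1^2\,\md\x=\frac{\alpha}2\|\opt_1-\opt_i\|^2$ (the density $G_1^2$ has mean $\opt_1$, so only the constant part of the affine function $r_1-r_i$ survives). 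Together with $\lambda_H\ge\lambda_1(\alpha,\mu)$ this bounds $\lambda_H$ from both sides independently of $\delta$.

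I do not expect any real obstacle in carrying this out; the only points requiring care are the combinatorial bookkeeping of the off-diagonal sums and the fact, easy to overlook, that uniformity in $\delta$ genuinely needs a constant-in-$i$ test function $\Psi_\star$ rather than the $\Psi=(G_1,\dots,G_H)$ used for the third inequality of \eqref{ineqs}.
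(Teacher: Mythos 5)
Your proof is correct and follows essentially the same route as the paper: Young's inequality $2\varphi_i\varphi_j\le\varphi_i^2+\varphi_j^2$ for the first bound, the scalar variational characterisation of $\lambda_1$ applied to each $\varphi_i$ for the second, the test vector $(G_1,\ldots,G_H)/\sqrt H$ for the third, positivity of $\int G_iG_j$ for the last, and a componentwise-constant test tuple to kill the $\delta$-term for the uniform bound in $\delta$. The only (harmless) difference is that the paper uses a generic $\Gamma$ with $\int\Gamma^2=1/H$ repeated $H$ times for that last point, whereas you take the explicit choice $G_1/\sqrt H$ and compute the explicit bound $\lambda_1+\frac{\alpha}{2H}\sum_{i=2}^H\|\opt_1-\opt_i\|^2$, which is a correct refinement.
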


In the sequel, and without loss of generality, we fix the optima $\opt_1$ and $\opt_2$ at
\begin{equation}\label{O1-O2}
\opt _1:=(-\beta,0,\ldots,0), \quad  \opt _2:=(\beta,0,\ldots,0),
\end{equation}
for some $\beta\geq 0$. In the case $H=2$, we now denote $\lambda_2(\delta,\alpha,\mu,\beta)$ the principal eigenvalue of~\eqref{eq:eigenvalue_pb}. In~\cite{HamLavRoq20}, we defined the \textit{habitat difference} by $m_D := \|\opt_1-\opt_2\|^2/2=2\,\beta^2$, and we proved that $\lambda_2(\delta,\alpha,\mu,\beta)$ was an increasing function of $m_D$ (and therefore of $\beta$), see \cite[Proposition~4]{HamLavRoq20}. We also proved that $\lambda_2(\delta,\alpha,\mu,\beta)\to\lambda_1(\alpha,\mu)=\mu n\sqrt{\alpha}/2-\rmax$ as $\delta\to0^+$. The result below, which is a direct consequence of Proposition~\ref{prop bound lambda} (together with  \eqref{prod-sca-gauss}), extends this last result to the case of $H\ge 2$ hosts.

\begin{corollary}\label{cor bound lambda bis}
Let $H\geq 2$ be given. Then, for every $\delta>0$, $\alpha>0,$ $\mu>0$, $\beta\geq 0$, and~$(\opt_i)_{1\le i\le H}\in(\R^n)^H$ with $\opt_1$ and $\opt_2$ as in~\eqref{O1-O2}, there holds
$$\lambda_1(\alpha,\mu)\le\lambda _H(\delta,\alpha,\mu,\opt_1,\ldots,\opt_H)\ \leq \lambda _1(\alpha,\mu)+\delta\left(1-\frac 2{H(H-1)}e^{-\sqrt{\alpha}\beta^2/\mu}\right).$$
Incidentally, for $H=2$, $\lambda_2(\delta,\alpha,\mu,\beta)-\lambda_1(\alpha,\mu)\to0$ when $\alpha \to 0^+$, $\mu\to+\infty$ or $\beta\to0$.
\end{corollary}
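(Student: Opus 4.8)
The plan is to obtain both inequalities as immediate specialisations of Proposition~\ref{prop bound lambda}, the only ingredients specific to the present statement being the positions~\eqref{O1-O2} of $\opt_1$ and $\opt_2$ and the Gaussian product formula~\eqref{prod-sca-gauss}. For the lower bound there is nothing to do: the first line of the chain~\eqref{ineqs} already reads $\lambda_1(\alpha,\mu)\le\lambda_H(\delta,\alpha,\mu,\opt_1,\ldots,\opt_H)$ for an arbitrary family of optima, hence in particular for the family considered here.

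For the upper bound, I would start from the third line of~\eqref{ineqs},
\[
\lambda_H(\delta,\alpha,\mu,\opt_1,\ldots,\opt_H)\le\lambda_1(\alpha,\mu)+\delta\Big(1-\sum_{1\le i<j\le H}\frac{2}{H(H-1)}\int_{\R^n}G_i(\x)\,G_j(\x)\,\md\x\Big),
\]
and keep only the term $(i,j)=(1,2)$ in the inner sum. Each product $\int_{\R^n}G_i\,G_j$ equals $e^{-\sqrt{\alpha}\norm{\opt_i-\opt_j}^2/(4\mu)}>0$ by~\eqref{prod-sca-gauss}, so discarding the remaining nonnegative terms decreases the subtracted sum and therefore increases the right-hand side, yielding
\[
\lambda_H\le\lambda_1(\alpha,\mu)+\delta\Big(1-\frac{2}{H(H-1)}\int_{\R^n}G_1(\x)\,G_2(\x)\,\md\x\Big).
\]
It then remains to evaluate $\int_{\R^n}G_1\,G_2$ with~\eqref{prod-sca-gauss} and~\eqref{O1-O2}: since $\opt_1-\opt_2=(-2\beta,0,\ldots,0)$, one has $\norm{\opt_1-\opt_2}^2=4\beta^2$, whence $\int_{\R^n}G_1\,G_2=e^{-\sqrt{\alpha}\,\beta^2/\mu}$, which is exactly the announced bound.

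For the last assertion, I would specialise to $H=2$, where $2/(H(H-1))=1$, so that the two bounds combine into $0\le\lambda_2(\delta,\alpha,\mu,\beta)-\lambda_1(\alpha,\mu)\le\delta\,(1-e^{-\sqrt{\alpha}\,\beta^2/\mu})$. In each of the three regimes $\alpha\to0^+$, $\mu\to+\infty$ and $\beta\to0$, the exponent $\sqrt{\alpha}\,\beta^2/\mu$ tends to $0$, so the upper bound tends to $0$, and the squeeze theorem gives $\lambda_2-\lambda_1\to0$. The only point requiring any attention is bookkeeping: one must check that terms are being dropped from a \emph{subtracted} sum — which is why the inequality goes in the favourable direction — and that $\norm{\opt_1-\opt_2}^2=4\beta^2$, so that the exponent in~\eqref{prod-sca-gauss} simplifies to $\sqrt{\alpha}\,\beta^2/\mu$ (and not $\sqrt{\alpha}\,\beta^2/(2\mu)$). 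There is no genuine obstacle: the result is a direct corollary.
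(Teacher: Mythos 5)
Your argument is correct and is exactly the paper's route: the corollary is stated there as a direct consequence of Proposition~\ref{prop bound lambda} combined with~\eqref{prod-sca-gauss}, i.e.\ keeping only the $(1,2)$ term in the subtracted sum and computing $\|\opt_1-\opt_2\|^2=4\beta^2$, then squeezing for $H=2$. Nothing to add.
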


Having understood, by Propositions~\ref{prop monolambda}-\ref{prop bound lambda}, the small migration rate regime $\delta\to 0$, we now turn to the asymptotics $\delta \to +\infty$. The case $H=2$ was investigated in \cite[Proposition~4]{HamLavRoq20}, where we obtained that $\lambda_2(\delta,1,\mu,\beta)\to \lambda_1(1,\mu)+\beta^2/2=\mu n/2-\rmax+\beta^2/2$ as $\delta\to+\infty$ (in the case $\alpha=1$). We generalise here this result to any number $H\ge 2$ of hosts.

\begin{proposition}[Large migration rate]\label{prop:lambda-infini}
Let $H\geq 2$, $\alpha>0$, $\mu>0$, and $(\opt_i)_{1\le i\le H}\in(\R^n)^H$ be given. Then $\lambda_H(\delta,\alpha,\mu,\opt_1,\ldots,\opt_H)\nearrow \lambda_{H,\infty}(\alpha,\mu,\opt_1,\ldots,\opt_H)$ as $\delta\to+\infty$, where $\lambda_{H,\infty}(\alpha,\mu,\opt_1,\ldots,\opt_H)$ is the principal eigenvalue for a population living in a single host and with ``effective'' fitness equal to the mean of the $H$ original fitness functions, that~is,
\begin{equation}\label{eq-large-delta}
-\frac{\mu ^2}{2}\Delta \varphi (\x)-R_H(\x)\,\varphi(\x)=\lambda_{H,\infty}(\alpha,\mu,\opt_1,\ldots,\opt_H)\,\varphi(\x), \quad R_H(\x):=\frac 1 H \sum_{i=1}^H r_i(\x),
\end{equation}
with principal eigenfunction $\varphi\in C^\infty_0(\R^n)$. As a consequence, given the center of mass
$$\mathbf{M}:=\frac{1}{H}\sum\limits_{i=1}^H \opt_i,$$
we have
\begin{equation}\label{lambda-delta-gd-centre-masse}
\lambda_{H,\infty}(\alpha,\mu,\opt_1,\ldots,\opt_H)=\lambda_1(\alpha,\mu)-R_H(\mathbf{M})+\rmax.
\end{equation}
\end{proposition}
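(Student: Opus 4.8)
The plan is to work with the Rayleigh characterisation \eqref{Rayleigh}--\eqref{eq:rayleigh_Q} and let $\delta\to+\infty$. The monotone convergence of $\lambda_H(\delta,\alpha,\mu,\opt_1,\ldots,\opt_H)$ to some finite limit is free: Proposition~\ref{prop monolambda} gives monotonicity in $\delta$ and Proposition~\ref{prop bound lambda} a bound uniform in $\delta$, so only the identification of this limit with the principal eigenvalue of \eqref{eq-large-delta} remains. Two preliminary reductions are useful. First, for any normalised $\Psi=(\psi_1,\ldots,\psi_H)$, expanding $\sum_{1\le i<j\le H}\int_{\R^n}(\psi_i-\psi_j)^2\,\md\x$ and using $\sum_i\int_{\R^n}\psi_i^2\,\md\x=1$ gives
\[
1-\sum_{1\le i<j\le H}\frac{2}{H-1}\int_{\R^n}\psi_i\,\psi_j\,\md\x=\frac{1}{H-1}\sum_{1\le i<j\le H}\int_{\R^n}(\psi_i-\psi_j)^2\,\md\x\ \ge\ 0,
\]
so that on the constraint set $Q_H(\Psi)$ is the (always nonnegative) decoupled energy $\sum_i\big(\tfrac{\mu^2}{2}\int\|\nabla\psi_i\|^2-\int r_i\psi_i^2\big)$ plus the penalisation $\frac{\delta}{H-1}\sum_{i<j}\int(\psi_i-\psi_j)^2$. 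Second, completing the square in $\sum_i\|\x-\opt_i\|^2$ shows that the effective fitness is itself of Fisher geometrical type, $R_H(\x)=R_H(\M)-\tfrac\alpha2\|\x-\M\|^2$; hence by \eqref{def:gaussian} (with $\rmax$ replaced by $R_H(\M)$ and optimum $\M$) the problem \eqref{eq-large-delta} has principal eigenvalue $\tfrac{\mu n\sqrt\alpha}{2}-R_H(\M)=\lambda_1(\alpha,\mu)-R_H(\M)+\rmax$ and a Gaussian principal eigenfunction $\varphi\in C^\infty_0(\R^n)\cap L^2_w(\R^n)$, which is exactly \eqref{lambda-delta-gd-centre-masse}. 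So it only remains to show that $\lim_{\delta\to+\infty}\lambda_H(\delta)$ equals the principal eigenvalue of \eqref{eq-large-delta}.

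For the upper bound I would test $Q_H$ with the diagonal competitor $\Psi=(\varphi/\sqrt H,\ldots,\varphi/\sqrt H)$: it is admissible (normalised and in $L^2_w$), its penalisation vanishes, and $Q_H(\Psi)$ reduces to the Rayleigh quotient $\tfrac{\mu^2}{2}\int\|\nabla\varphi\|^2-\int R_H\varphi^2$ of \eqref{eq-large-delta}; since $\lambda_H(\delta)\le Q_H(\Psi)$ for all $\delta$, the same holds in the limit, giving $\lim_{\delta\to+\infty}\lambda_H(\delta)\le\tfrac{\mu n\sqrt\alpha}{2}-R_H(\M)$.

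For the lower bound, let $\Phi^\delta=(\varphi_1^\delta,\ldots,\varphi_H^\delta)$ be the normalised principal eigenvector for the parameter $\delta$. From the rewriting above and the uniform bound $\lambda_H(\delta)\le C$ one reads off, uniformly in $\delta$: a bound on $\sum_i\int\|\nabla\varphi_i^\delta\|^2$ (using $-r_i\ge-\rmax$); a bound on $\sum_i\int\|\x\|^2(\varphi_i^\delta)^2$ (from $\sum_i\int(-r_i)(\varphi_i^\delta)^2\le\lambda_H(\delta)$ and the explicit form of $r_i$); and the collapse $\sum_{i<j}\int(\varphi_i^\delta-\varphi_j^\delta)^2\le(C+\rmax)(H-1)/\delta\to0$. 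Thus $(\Phi^\delta)_\delta$ is bounded in $(H^1(\R^n)\cap L^2_w(\R^n))^H$; along any sequence $\delta_k\to+\infty$, using the compact embedding $H^1(\R^n)\cap L^2_w(\R^n)\hookrightarrow L^2(\R^n)$, I extract a subsequence along which each $\varphi_i^{\delta_k}$ converges weakly in $H^1$, strongly in $L^2$ and a.e.\ to some $\varphi_i^\infty$; the collapse forces $\varphi_1^\infty=\cdots=\varphi_H^\infty=:\varphi^\infty$, and the normalisation passes to the limit as $\int(\varphi^\infty)^2=1/H$. Dropping the nonnegative penalisation and using weak lower semicontinuity of the Dirichlet energies together with Fatou's lemma for the nonnegative weights $-r_i+\rmax$ (the $\rmax$-part converging by the $L^2$-convergence), I get $\liminf_k\lambda_H(\delta_k)\ge H\big(\tfrac{\mu^2}{2}\int\|\nabla\varphi^\infty\|^2-\int R_H(\varphi^\infty)^2\big)$, and with $\psi:=\sqrt H\,\varphi^\infty$ (normalised) the right-hand side is $\ge$ the principal eigenvalue of \eqref{eq-large-delta} by its Rayleigh characterisation. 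Since the full limit exists, it equals this eigenvalue, which is \eqref{lambda-delta-gd-centre-masse}.

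I expect the delicate point to be the lower bound, specifically securing the \emph{uniform in $\delta$} tightness estimate on $\sum_i\int\|\x\|^2(\varphi_i^\delta)^2$ that licenses the compact embedding, and then checking that the sign-indefinite potential part of $Q_H$ is still weakly lower semicontinuous along the sequence — this is exactly what the splitting $-r_i=(-r_i+\rmax)-\rmax$ (Fatou on the nonnegative part, $L^2$-convergence on the remainder) handles.
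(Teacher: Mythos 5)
Your proposal is correct, and it follows essentially the route the paper itself relies on: the paper disposes of the convergence $\lambda_H(\delta)\to\lambda_{H,\infty}$ by citing the $H=2$ argument of \cite[Proposition~4~(i)]{HamLavRoq20} (whose natural adaptation is exactly your variational scheme: the diagonal test vector for the upper bound, and uniform $H^1\cap L^2_w$ bounds plus the $O(1/\delta)$ collapse of $\sum_{i<j}\int(\varphi_i^\delta-\varphi_j^\delta)^2$ for the lower bound, with the splitting $-r_i=(-r_i+\rmax)-\rmax$ handling the sign-indefinite potential), while monotonicity and $\delta$-uniform boundedness come from Propositions~\ref{prop monolambda}--\ref{prop bound lambda} exactly as you use them. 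Your identification of $\lambda_{H,\infty}$ via $R_H(\x)=R_H(\mathbf{M})-\tfrac{\alpha}{2}\|\x-\mathbf{M}\|^2$ and \eqref{def:gaussian} is precisely the paper's own derivation of \eqref{lambda-delta-gd-centre-masse}; in effect you have supplied in full the details the paper delegates to the companion reference.
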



\section{Effect of a third host \label{sec:H3vsH2}}

As seen in Proposition~\ref{prop bound lambda}, the inequality $\lambda_1(\alpha,\mu)=\lambda_1\leq \lambda_2=\lambda_2(\delta,\alpha,\mu,\opt_1,\opt_2)$ is always true (as is the inequality $\lambda_1\le\lambda_H$ for any $H\ge2$), meaning that the presence of a second host always penalises the population. In this section, given $\delta>0$, $\alpha>0$, $\mu>0$, we investigate how the introduction of a third host affects the chances of survival of the population, compared to the case of only two hosts. To do so, we denote $\lambda_2=\lambda_2(\opt_1,\opt_2)$ the principal eigenvalue for two hosts with optima $\opt_1$ and $\opt_2$ as in \eqref{O1-O2}, and~$\lambda_3=\lambda_3(\opt_1,\opt_2,\opt_3)$ the principal eigenvalue for three hosts with optima $\opt_1$, $\opt_2$ and~$\opt_3$. Our goal is to compare $\lambda_2(\opt_1,\opt_2)$ and $\lambda_3(\opt_1,\opt_2,\opt_3)$. As revealed in the following, the situation is very rich and the outcomes dramatically depend not only on the different parameters but also the position of the third optimum $\opt_3$. In the following, we denote
\begin{equation}\label{def:Lambda}
\Lambda_\delta:=\big\{\opt _3 \in \R^n: \lambda_3(\opt_1,\opt _2,\opt _3)<\lambda_2(\opt_1,\opt_2)\big\}
\end{equation}
the region in the phenotypical space where the presence of a third host corres\-ponding to an optimum $\opt_3$ causes a springboard effect, leading to higher chances of persistence of the pathogen, compared to an environment with the two hosts with optima~$\opt_1$ and~$\opt_2$.

We first deal in Section~\ref{ss:small-and-large} with the case of small or large migration regimes ($\delta$ small or large), while Section~\ref{sec42} provides some comparisons which hold for any given $\delta>0$. Some complementing numerical simulations are presented in Section~\ref{sec43}.


\subsection{Large and small migration regimes}\label{ss:small-and-large}


\subsubsection*{Large migration regime}

Let us first consider the regime corresponding to the  limit case $\delta\to +\infty$. We denote
\begin{equation}\label{defLambdainfty}
\Lambda_\infty:=\{\opt_3\in \R^n: \lambda_{3,\infty}(\opt_1,\opt_2,\opt_3)<\lambda_{2,\infty}(\opt_1,\opt_2)\},
\end{equation}
where the quantities $\lambda_{H,\infty}$ are defined in Proposition~\ref{prop:lambda-infini} (there, the dependence of $\lambda_{H,\infty}$ on $\alpha$ and $\mu$ was also emphasised; here $\alpha>0$ and $\mu>0$ are fixed, and we are mainly interested in the dependence with respect to the position of the optima, this is why we use the notations $\lambda_{3,\infty}(\opt_1,\opt_2,\opt_3)$ and $\lambda_{2,\infty}(\opt_1,\opt_2)$). We also recall that $\lambda_1=\mu n\sqrt{\alpha}/2-\rmax$ is independent of $\delta$.

For two hosts, the center of mass is $\mathbf{M}_2=(\opt_1+\opt_2)/2=(0,\ldots,0)=\mathcal O$, and the effective fitness defined in Proposition~\ref{prop:lambda-infini} is $ R_2(\x)=(r_1(\x)+r_2(\x))/2=\rmax-\alpha\Vert \x\Vert ^2/2-\alpha\beta^2/2$. As a result
$$\lambda_{2,\infty}(\opt_1,\opt_2)=\lambda_1-R_2(\mathbf M_2)+\rmax=\lambda_1+\frac{\alpha\,\beta^2}{2}.$$

For three hosts, the third optimum is denoted $\opt_3=(a_1,\ldots,a_n)$, and the effective fitness of Proposition \ref{prop:lambda-infini} is
$$R_3(\x)=\frac{r_1(\x)+r_2(\x)+r_3(\x)}{3}=\rmax-\frac{\alpha}{2}\Vert \x -\M_3\Vert ^2-\frac \alpha 2 \left(\frac 23\beta ^2+\frac 2 9(a_1^2+\cdots+a_n^2)\right),$$
where $\M_3:=\frac 13 \opt _3$ is the center of mass of the triangle $\opt_1\opt_2 \opt_3$. As a result
$$\lambda_{3,\infty}(\opt_1,\opt_2,\opt_3)=\lambda_1-R_3(\M_3)+\rmax=\lambda_1+\frac \alpha 2 \left(\frac 23 \beta^2+\frac 29(a_1^2+\cdots+a_n^2)\right).$$

From this, we immediately identify the asymptotic region (as $\delta \to +\infty$) where the third host increases the chances of persistence (see right column of Figure~\ref{fig:diff scenar}).

\begin{proposition}[Region leading to higher chances of persistence for large migration rate]\label{Alfaro balls}
In the phenotypical space $\R^n$, the region $\Lambda_\infty$ defined by~\eqref{defLambdainfty} is the open Euclidean ball $B(\mathcal{O},\sqrt{3/2}\,\beta)$ centered at the origin $\mathcal O$ and of radius $\sqrt{3/2}\,\beta$.
\end{proposition}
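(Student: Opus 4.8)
The plan is to turn the defining inequality $\lambda_{3,\infty}(\opt_1,\opt_2,\opt_3)<\lambda_{2,\infty}(\opt_1,\opt_2)$ into a simple algebraic condition on $\opt_3=(a_1,\ldots,a_n)$ by inserting the two explicit formulas already computed just above the statement. Indeed, the excerpt establishes
\[
\lambda_{2,\infty}(\opt_1,\opt_2)=\lambda_1+\frac{\alpha\beta^2}{2},\qquad
\lambda_{3,\infty}(\opt_1,\opt_2,\opt_3)=\lambda_1+\frac{\alpha}{2}\left(\frac{2}{3}\beta^2+\frac{2}{9}(a_1^2+\cdots+a_n^2)\right),
\]
so the inequality $\lambda_{3,\infty}<\lambda_{2,\infty}$ is, after subtracting $\lambda_1$ and multiplying by $2/\alpha>0$, equivalent to $\frac{2}{3}\beta^2+\frac{2}{9}\norm{\opt_3}^2<\beta^2$, i.e. $\frac{2}{9}\norm{\opt_3}^2<\frac{1}{3}\beta^2$, i.e. $\norm{\opt_3}^2<\frac{3}{2}\beta^2$. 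This is precisely $\opt_3\in B(\mathcal O,\sqrt{3/2}\,\beta)$, which is what we must prove.

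Concretely, I would proceed in two short steps. First, recall (citing Proposition~\ref{prop:lambda-infini} and the two displayed computations preceding the statement, which hold since $\alpha>0$ and $\mu>0$ are fixed) that the two quantities $\lambda_{2,\infty}$ and $\lambda_{3,\infty}$ admit the closed forms above; the only thing worth double-checking is the expansion $R_3(\M_3)=\rmax-\frac{\alpha}{2}\big(\frac{2}{3}\beta^2+\frac{2}{9}\norm{\opt_3}^2\big)$, which follows from $R_3(\x)=\rmax-\frac{\alpha}{2H_3}\sum_{i=1}^3\norm{\x-\opt_i}^2$ evaluated at the centroid $\M_3=\frac13\opt_3$ together with the parallel-axis identity $\sum_{i=1}^3\norm{\M_3-\opt_i}^2=\sum_{i=1}^3\norm{\opt_i}^2-3\norm{\M_3}^2$ and the values $\norm{\opt_1}^2=\norm{\opt_2}^2=\beta^2$, $\norm{\opt_3}^2=a_1^2+\cdots+a_n^2$. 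Second, plug these into~\eqref{defLambdainfty} and simplify as above to obtain the equivalence $\opt_3\in\Lambda_\infty\iff\norm{\opt_3}<\sqrt{3/2}\,\beta$, which identifies $\Lambda_\infty=B(\mathcal O,\sqrt{3/2}\,\beta)$ and finishes the proof.

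There is essentially no obstacle here: the proposition is a direct bookkeeping consequence of the explicit large-migration limits derived in Proposition~\ref{prop:lambda-infini} and specialised in the paragraphs immediately preceding the statement. The only mild care needed is algebraic — correctly carrying out the centroid substitution and the parallel-axis cancellation so that the $\beta^2$ and $\norm{\opt_3}^2$ coefficients come out to $\frac23$ and $\frac29$ respectively, and then not dropping a factor when comparing with $\lambda_{2,\infty}$. Since the strict inequality is preserved (multiplication by the positive constant $2/\alpha$), the region is genuinely the \emph{open} ball, consistent with the statement. No analytic input beyond the already-cited spectral results is required.
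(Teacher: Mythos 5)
Your proof is correct and is essentially identical to the paper's own argument: the paper derives Proposition~\ref{Alfaro balls} ``immediately'' from the explicit formulas $\lambda_{2,\infty}=\lambda_1+\alpha\beta^2/2$ and $\lambda_{3,\infty}=\lambda_1+\frac{\alpha}{2}\big(\frac23\beta^2+\frac29\|\opt_3\|^2\big)$ computed just before the statement via Proposition~\ref{prop:lambda-infini}, exactly as you do. Your centroid/parallel-axis check and the final reduction to $\|\opt_3\|^2<\tfrac32\beta^2$ are both accurate, so nothing is missing.
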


Notice that both $\opt_1$ and $\opt _2$ belong to $\Lambda_\infty$. On the other hand, $\opt_3=(0,\pm \sqrt 3 \beta,0,\ldots,0)$, which makes $\opt_1\opt_2\opt_3$ an equilateral triangle, does not belong to $\Lambda_\infty$. This, as revealed below (see Proposition~\ref{prop equilateral}), is actually true for all $\delta>0$.

\begin{remark}{\rm Consider the ``host in the middle'' situation, that is $\opt_3=\mathcal{O}=(0,\ldots,0)$ so that $\lambda_{3,\infty}(\opt_1,\opt_2,\mathcal{O})=\lambda_1+\alpha\beta^2/3$. We now introduce a fourth host whose optimum is denoted $\opt _4=(b_1,\ldots,b_n)$. As above we compute
$$\lambda_{4,\infty}(\opt_1,\opt_2,\mathcal{O},\opt_4)=\lambda_1+\frac \alpha 2 \left(\frac 12 \beta ^2+\frac{3}{16}(b_1^2+\cdots+b_n^2)\right).$$
Then the region where $\lambda_{4,\infty}(\opt_1,\opt_2,\mathcal{O},\opt_4)<\lambda_{3,\infty}(\opt_1,\opt_2,\mathcal{O})$ is the ball $B(\mathcal{O},\sqrt{8/9}\,\beta)$ and, this time, $\opt_1$ and $\opt_2$ do not belong to this ball.}
\end{remark}


\subsubsection*{Small migration regime}

Let us fix $\alpha>0$, $\mu>0$, $\beta> 0$, $\opt_1$ and $\opt_2$ as in~\eqref{O1-O2}, and let us now investigate $\Lambda_\delta$ given in~\eqref{def:Lambda} for $0<\delta \ll 1$. For given positions $\opt_1,\opt_2,\opt_3$, using the  notations $\lambda_3=\lambda_3(\delta)$ and $\lambda_2=\lambda_2(\delta)$, we focus on the difference $\lambda_3(\delta)-\lambda_2(\delta)$. As mentioned above, by~\eqref{deflambdaH0}, $\lambda_3(0)-\lambda_2(0)=\lambda_1-\lambda_1=0$, and $\lambda_3(\delta)-\lambda_2(\delta)\to\lambda_1-\lambda_1=0$ by Proposition~\ref{prop bound lambda}. Hence, if we define $\Lambda_0$ as in~\eqref{def:Lambda} with $\delta=0$, we have $\Lambda_0=\emptyset$. Thus, the quantity $\lambda_3'(0)-\lambda_2'(0)$, if it exists, would be an important information. For a general $H\ge2$, we denote $\lambda_H=\lambda_H(\delta)$, all other parameters $\alpha>0$, $\mu>0$ and $(\opt_i)_{1\le i\le H}\in(\R^n)^H$ being fixed.

\begin{proposition}[Region leading to higher chances of persistence for small migration rate]\label{prop:fitness-gain-small-delta}
For any $H\ge2$, the function $\delta\mapsto\lambda_H(\delta)$ is differentiable in $[0,+\infty)$, with $\lambda'_H(\delta)<1$ for all $\delta\ge0$. Furthermore, either the optima $\opt_i$ are all identical and $\lambda_H(\delta)=\lambda_1=\mu n\sqrt{\alpha}/2-\rmax$ for all $\delta\ge0$, or~$\lambda'_H(\delta)>0$ for all $\delta\ge0$.\footnote{Notice that the inequalities $0\le\lambda_H'(0)<1$ are coherent with Corollary~\ref{cor bound lambda bis}.} Lastly,
\begin{equation}\label{formulelambda'}
\lambda_H'(0)=\frac{H-\mu_A}{H-1},
\end{equation}
where
$$\mu_A:=\max_{\qg=(q_1,\ldots,q_H)\in\R^H,\,\|\qg\|=1}\sum_{1\le i,j\le H}a_{ij}q_iq_j$$
is the largest eigenvalue of the $H\times H$ symmetric matrix $A=(a_{ij})_{1\le i,j\le H}$ defined by
$$a_{ij}:=e^{-\sqrt{\alpha}\|\opt_i-\opt_j\|^2/(4\mu)}>0.$$
In particular, with $\opt_1$ and $\opt_2$ as in~\eqref{O1-O2}, there holds
\begin{equation}\label{patatoide}\begin{array}{rcl}
\mathcal{P} & := & \{\opt_3\in\R^n:\lambda'_3(0)<\lambda'_2(0)\}\vspace{3pt}\\
& = & \ds\Big\{\opt_3\in\R^n:k(\opt_3):=g(\opt_3)\cos\Big(\frac{\arccos(h(\opt_3)\,g(\opt_3)^{-3})}{3}\Big)>1\Big\}\end{array}
\end{equation}
with
$$g(\opt_3):=\sqrt{\frac{1+2h(\opt_3)\cosh(\sqrt{\alpha}\,a_1\beta/\mu)}{3}},\ \ h(\opt_3):=e^{\sqrt{\alpha}(3\beta^2-\|\opt_3\|^2)/(2\mu)},$$
and $\opt_3=(a_1,\ldots,a_n)$. Thus, if $\opt_3\in \mathcal P$, then there is $\delta_0>0$ such that $\opt_3\in \Lambda_\delta$ for every $0<\delta<\delta_0$, with $\Lambda_{\delta}$ as in~\eqref{def:Lambda}. On the other hand, if $\opt_3\not\in\overline{\mathcal P}$, then there is $\delta_0>0$ such that $\opt_3\not \in \Lambda_\delta$ for every $0<\delta<\delta_0$.
\end{proposition}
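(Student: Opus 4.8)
The plan is to exploit the fact that, for each fixed normalised $\Psi=(\psi_1,\dots,\psi_H)$, the Rayleigh functional \eqref{eq:rayleigh_Q} is \emph{affine} in $\delta$: $Q_H(\Psi)=\widetilde{Q}(\Psi)+\delta\,B(\Psi)$, where $\widetilde{Q}(\Psi):=\sum_{i=1}^H\big(\tfrac{\mu^2}{2}\int_{\R^n}\|\nabla\psi_i\|^2\,\md\x-\int_{\R^n}r_i\,\psi_i^2\,\md\x\big)$ is the quadratic form of $\A$ at $\delta=0$ and $B(\Psi):=1-\tfrac{2}{H-1}\sum_{1\le i<j\le H}\int_{\R^n}\psi_i\psi_j\,\md\x$. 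Since $\A$ at $\delta=0$ is block-diagonal with each block having bottom eigenvalue $\lambda_1(\alpha,\mu)$, the minimum of $\widetilde{Q}$ over normalised $\Psi$ equals $\lambda_1$, attained exactly on the normalised elements of the $H$-dimensional ground eigenspace $E_0$. Writing $\Phi_\delta=(\varphi_1^\delta,\dots,\varphi_H^\delta)$ for the normalised positive principal eigenvector, comparing $\lambda_H(\delta+h)$ with the competitor $\Phi_\delta$ and $\lambda_H(\delta)$ with the competitor $\Phi_{\delta+h}$ gives the sandwich $B(\Phi_{\delta+h})\le\tfrac{\lambda_H(\delta+h)-\lambda_H(\delta)}{h}\le B(\Phi_\delta)$ for $h>0$ (with reversed inequalities for $h<0$). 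Uniform-in-$\delta$ elliptic estimates and uniform exponential decay of the $\Phi_\delta$ (from the confining potentials $r_i$) yield $L^2(\R^n)^H$-compactness of $\{\Phi_\delta\}$; together with uniqueness of the normalised nonnegative eigenvector this makes $\delta\mapsto\Phi_\delta$ continuous in $L^2$, so $B(\Phi_{\delta\pm h})\to B(\Phi_\delta)$ and the sandwich gives differentiability with $\lambda_H'(\delta)=B(\Phi_\delta)$. Positivity of the $\varphi_i^\delta$ forces $\lambda_H'(\delta)<1$; Cauchy--Schwarz gives $\tfrac{2}{H-1}\sum_{i<j}\int\varphi_i^\delta\varphi_j^\delta\le\sum_i\int(\varphi_i^\delta)^2=1$, hence $\lambda_H'(\delta)\ge0$, with equality forcing all $\varphi_i^\delta$ equal and then, via \eqref{eq:eigenvalue_pb}, all $r_i$ equal, i.e.\ all optima equal, in which case $\lambda_H\equiv\lambda_1$ by Proposition~\ref{prop monolambda}.

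Next I would compute $\lambda_H'(0)$ by degenerate first-order perturbation theory. At $\delta=0$, $\lambda_1$ has multiplicity $H$ with $E_0=\mathrm{span}(v_1,\dots,v_H)$, where $v_i$ is the $\R^H$-valued function with $i$-th component $G_i=G(\cdot-\opt_i)$ and all other components zero; the $v_i$ form an orthonormal basis of $E_0$. Writing $\A=\A_0+\delta\A_1$ with $\A_1=\tfrac{H}{H-1}\mathrm{Id}-\tfrac{1}{H-1}U$ ($U$ the $H\times H$ all-ones matrix, acting pointwise), a direct computation using \eqref{prod-sca-gauss} gives $\langle v_i,\A_1 v_j\rangle_{L^2}=\tfrac{H}{H-1}\delta_{ij}-\tfrac{1}{H-1}a_{ij}$, i.e.\ the matrix $M:=\tfrac{H}{H-1}\mathrm{Id}-\tfrac{1}{H-1}A$, whose smallest eigenvalue is $\tfrac{H-\mu_A}{H-1}$, attained on the eigenvector $c\ge0$ of $A$ for $\mu_A$ (positive and simple by Perron--Frobenius, since $A$ has positive entries). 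Using $\Psi=\sum_k c_k v_k$ as test function gives the upper bound $\lambda_H(\delta)\le\lambda_1+\delta\,B(\Psi)=\lambda_1+\delta\langle Mc,c\rangle=\lambda_1+\delta\tfrac{H-\mu_A}{H-1}$; conversely $\lambda_H(\delta)=\widetilde{Q}(\Phi_\delta)+\delta B(\Phi_\delta)\ge\lambda_1+\delta B(\Phi_\delta)$, and extracting an $L^2$-limit $\Phi_0$ of $\Phi_\delta$ as $\delta\to0^+$ — a normalised nonnegative eigenfunction of $\A_0$ for $\lambda_1$, hence $\Phi_0\in E_0$ — yields $\liminf_{\delta\to0^+}\tfrac{\lambda_H(\delta)-\lambda_1}{\delta}\ge B(\Phi_0)\ge\tfrac{H-\mu_A}{H-1}$. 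The two bounds match, proving \eqref{formulelambda'}; since $A\ne\mathrm{Id}$ one has $1<\mu_A$, and $\mu_A<H$ unless all optima coincide, which is consistent with the first two assertions.

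Then I would specialise to $H=2,3$ and deduce \eqref{patatoide}. With $\opt_1,\opt_2$ as in \eqref{O1-O2} and $\rho:=e^{-\sqrt\alpha\beta^2/\mu}$, for $H=2$ one gets $\mu_A=1+\rho$ and $\lambda_2'(0)=1-\rho$. For $H=3$ with $\opt_3=(a_1,\dots,a_n)$, the off-diagonal entries of $A$ satisfy $a_{12}=\rho$, $a_{13}a_{23}=h(\opt_3)\rho^2$ and $a_{13}^2+a_{23}^2=2h(\opt_3)\rho^2\cosh(\sqrt\alpha\,a_1\beta/\mu)$, so $a_{12}^2+a_{13}^2+a_{23}^2=3\rho^2 g(\opt_3)^2$ and $a_{12}a_{13}a_{23}=h(\opt_3)\rho^3$; hence the characteristic equation $\det(A-\nu\,\mathrm{Id})=0$, in the variable $t=1-\nu$, is the depressed cubic $t^3-3\rho^2 g(\opt_3)^2\,t+2h(\opt_3)\rho^3=0$. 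Substituting $t=\rho\,g(\opt_3)\,\tau$ reduces it to $\tau^3-3\tau+2h(\opt_3)g(\opt_3)^{-3}=0$; writing $\tau=2\cos\phi$ turns this into $\cos(3\phi)=-h(\opt_3)g(\opt_3)^{-3}$, and selecting the root giving the largest $\nu$ (the smallest $\tau$), together with $\arccos(-x)=\pi-\arccos(x)$, gives $\mu_A=1+2\rho\,k(\opt_3)$, so $\lambda_3'(0)=\tfrac{3-\mu_A}{2}=1-\rho\,k(\opt_3)$. Therefore $\lambda_3'(0)<\lambda_2'(0)\iff k(\opt_3)>1$, which is \eqref{patatoide}. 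The final claim then follows because $\delta\mapsto\lambda_3(\delta)-\lambda_2(\delta)$ is differentiable at $0$, vanishes there, and has derivative $\lambda_3'(0)-\lambda_2'(0)$: if $\opt_3\in\mathcal P$ this derivative is negative, so $\lambda_3<\lambda_2$ on some $(0,\delta_0)$, i.e.\ $\opt_3\in\Lambda_\delta$; if $\opt_3\notin\overline{\mathcal P}$ then $k(\opt_3)<1$ (by continuity of $k$ and openness of $\mathcal P$), the derivative is positive, and $\opt_3\notin\Lambda_\delta$ for all small $\delta>0$.

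The main obstacle is the degenerate perturbation argument of the second step: because $\lambda_1$ has multiplicity $H$ at $\delta=0$, one cannot simply differentiate a simple eigenvalue, and everything rests on establishing the uniform-in-$\delta$ elliptic and exponential-decay bounds for $\Phi_\delta$ near $\delta=0$ that provide $L^2$-compactness, identify the limiting eigenvector inside $E_0$, and allow passage to the limit in $B(\Phi_\delta)$. By contrast, the reduction of the $3\times3$ characteristic polynomial to a depressed cubic and its trigonometric resolution in the last step is lengthy but routine, the only point needing care being the selection of the correct branch of $\arccos$.
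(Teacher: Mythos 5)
Your proposal is correct and follows essentially the paper's route: the affine-in-$\delta$ Rayleigh-quotient sandwich together with $L^2$-continuity of the normalised eigenvector gives differentiability for $\delta>0$ and the formula $\lambda_H'(\delta)=1-\frac{2}{H-1}\sum_{i<j}\int_{\R^n}\varphi_i^\delta\varphi_j^\delta$, with the same positivity/Cauchy--Schwarz sign analysis; at $\delta=0$ the paper likewise extracts an $L^2$-limit of $\Phi^\delta$ of the form $(p_1G_1,\ldots,p_HG_H)$ and reduces to the matrix $A$, the only difference being bookkeeping (the paper identifies $\pg$ as the Perron eigenvector via a two-sided sandwich against arbitrary $(q_1G_1,\ldots,q_HG_H)$, while you combine the one-sided bound $B(\Phi_0)\ge\frac{H-\mu_A}{H-1}$ with the Perron-eigenvector test function, which is a slightly leaner version of the same computation); your trigonometric resolution of the cubic for $H=3$ matches, and in fact spells out algebra the paper leaves implicit. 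The one slip is in the final converse step: from $\opt_3\notin\overline{\mathcal P}$, continuity of $k$ and openness of $\mathcal P$ only yield $k(\opt_3)\le 1$, not $k(\opt_3)<1$; the strict inequality amounts to the identity $\overline{\mathcal P}=\{k\ge1\}$ (equivalently, that $k$ cannot equal $1$ at a point in whose neighbourhood $k\le1$), which the paper also uses (it asserts $\{\lambda_3'(0)>\lambda_2'(0)\}=\R^n\setminus\overline{\mathcal P}$ after ``algebraic transformations'') but which requires a short additional argument about the explicit function $k$ rather than general topology.
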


Let us now briefly comment on the properties of $\mathcal P$ given in~\eqref{patatoide}. First of all, since the function $k$ is continuous in $\R^n$ and converges to $\cos(\pi/6)/\sqrt{3}=1/2<1$ as~$\|\opt_3\|\to+\infty$, the set $\mathcal{P}$ is open and bounded. Furthermore, it is symmetric with respect to the hyperplane $\{0\}\times\R^{n-1}$ and axisymmetric with respect to the axis $\R\times\{0\}^{n-1}$. When~$\opt_3=(0,\sqrt{3}\beta,0,\ldots,0)$, then $\opt_1\opt_2\opt_3$ is an equilateral triangle, and $k(\opt_3)=1$, hence $\opt_3\in\partial\mathcal{P}$. We will actually prove in Proposition~\ref{prop equilateral} below that, for any $\delta>0$, $\opt_3=(0,\sqrt 3 \beta,0,\ldots,0)\not\in\Lambda_\delta$. But the previous observation shows that, for small $\delta$, it was so close!

On the other hand, for an arbitrary point $\opt_3=(a_1,\ldots,a_n)\in\R^n$, formula~\eqref{formulelambda'} implies that $\lambda'_2(0)=1-e^{-\sqrt{\alpha}\beta^2/\mu}$ and, with $\qg:=(1/\sqrt{3},1/\sqrt{3},1/\sqrt{3})$,
$$\lambda'_3(0)\le\frac{3-\qg A\qg}{2}=1-\frac{e^{-\sqrt{\alpha}\beta^2/\mu}+2\,e^{-\sqrt{\alpha}(\beta^2+\|\opt_3\|^2)/(4\mu)}\cosh(\sqrt{\alpha}a_1\beta/(2\mu))}{3}.$$
Therefore,
$$\mathcal{P}\ \supset\ \Big\{\opt_3\in\R^n:e^{-3\sqrt{\alpha}\beta^2/(4\mu)}<e^{-\sqrt{\alpha}a_1^2/(4\mu)}\cosh\Big(\frac{\sqrt{\alpha}a_1\beta}{2\mu}\Big)\,e^{-\sqrt{\alpha}(a_2^2+\cdots+a_n^2)/(4\mu)}\Big\}.$$
By studying the variations of the function $a_1\mapsto e^{-\sqrt{\alpha}a_1^2/(4\mu)}\cosh(\sqrt{\alpha}a_1\beta/(2\mu))$, one infers that
$$\mathcal{P}\supset[-\sqrt{3}\beta,\sqrt{3}\beta]\times\{0\}^{n-1},$$
hence $\opt_1\in\mathcal{P}$ and $\opt_2\in\mathcal{P}$. Lastly,
$$\mathcal{P}\supset\{0\}\times B'(\mathcal{O}',\sqrt{3}\beta),$$
where $B'(\mathcal{O}',\sqrt{3}\beta)$ is the open Euclidean ball of radius $\sqrt{3}\beta$ and centered at the origin $\mathcal{O}':=(0,\ldots,0)$ in $\R^{n-1}$.


\subsection{The general case $\delta>0$}\label{sec42}

In this subsection, we fix the parameters $\delta>0$, $\alpha>0$, and $\mu>0$. We recall that, without loss of generality, $\opt_1$ and $\opt_2$ are as in~\eqref{O1-O2} with $\beta\ge0$, and that $\Lambda_\delta$ denotes the phenotypical region where the presence of $\opt_3$ leads to higher chances of persistence, as defined in \eqref{def:Lambda}.


\subsubsection*{Some fitness loss situations}

We show that if the third optimum $\opt_3$ is such that $\opt_1\opt_2\opt_3$ forms an equilateral triangle, then $\lambda_2-\lambda_3$ is always negative (reduced chances of persistence). Since, by uniqueness of the principal eigenpair of~\eqref{eq:eigenvalue_pb}, the map $\opt_3\mapsto\lambda_3(\opt_1,\opt_2,\opt_3)$ is invariant by rotation around the line $(\opt_1\opt_2)$ containing $\opt_1$ and $\opt_2$, we can assume without loss of generality that $\opt_3$ lies in $\R^2\times\{0\}^{n-2}$, that is, $\opt_3=(0,\pm\sqrt{3}\beta,0,\ldots,0)$.

\begin{proposition}[Equilateral triangle configuration]\label{prop equilateral}
Assume that $\delta>0$, $\alpha>0$, $\mu>0$ are fixed, together with $\opt_1$ and $\opt_2$ as in~\eqref{O1-O2} with $\beta>0$. If $\opt_3= (0, \sqrt 3 \beta,0, \ldots,0)$ or~$\opt_3= (0, -\sqrt 3 \beta,0, \ldots,0)$, then $\lambda_3(\opt_1,\opt_2,\opt_3)>\lambda_2(\opt_1,\opt_2)$, hence $\opt_3\not \in\overline{\Lambda_\delta}$.
\end{proposition}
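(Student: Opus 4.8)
The key point is that the equilateral configuration $\opt_3=(0,\sqrt3\beta,0,\ldots,0)$ (or its mirror image) enjoys a full symmetry group: the three optima are permuted by the symmetries of the equilateral triangle they form. I would exploit this to get an explicit comparison test function. Concretely, let $\Phi=(\varphi_1,\varphi_2)$ be the normalised principal eigenvector for the two-host problem, so that $\lambda_2=Q_2(\Phi)$, and recall from Proposition~\ref{prop:statio_states}(ii) that $\int_{\R^n}\varphi_1=\int_{\R^n}\varphi_2$ and, by the reflection symmetry $x_1\mapsto-x_1$ exchanging $\opt_1$ and $\opt_2$, that $\varphi_2(x_1,x')=\varphi_1(-x_1,x')$. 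The plan is to build a normalised three-host test function $\Psi=(\psi_1,\psi_2,\psi_3)$ out of $\varphi_1,\varphi_2$ and show $Q_3(\Psi)>\lambda_2$, whence $\lambda_3=\min Q_3\le Q_3(\Psi)$ fails to beat $\lambda_2$ — wait, that gives the wrong direction, so instead one must argue the reverse: use the three-host principal eigenvector $(\phi_1,\phi_2,\phi_3)$ and, via its symmetry, feed a cleverly built pair into $Q_2$.

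So the real line of attack is: by uniqueness of the principal eigenpair and the invariance of the problem under the isometries permuting $\{\opt_1,\opt_2,\opt_3\}$, the eigenvector $(\phi_1,\phi_2,\phi_3)$ inherits the triangle symmetry — each $\phi_i$ is the same profile $\phi$ centred (in the appropriate sense) at $\opt_i$, i.e. $\phi_i=\phi\circ g_i$ where $g_i$ is an isometry sending $\opt_1$ to $\opt_i$, and $\|\phi_i\|_{L^2}^2=1/3$, $\int\phi_i$ all equal. Then I would test the two-host Rayleigh quotient $Q_2$ with the normalised pair $\Psi:=\big(\sqrt{3/2}\,\phi_1,\sqrt{3/2}\,\phi_2\big)$ (which satisfies $\int_{\R^n}(\psi_1^2+\psi_2^2)=1$). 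Computing $Q_2(\Psi)$ against $\lambda_3=Q_3((\phi_1,\phi_2,\phi_3))$ and using $r_1+r_2+r_3=3R_3$ together with the explicit Gaussian-type cross terms $\int\phi_i\phi_j$ (all equal, by symmetry, to some $c>0$), one should get a clean inequality. The gradient and potential terms scale by $3/2$; the migration term in $Q_3$ contributes $\delta(1-2\int(\phi_1\phi_2+\phi_1\phi_3+\phi_2\phi_3))=\delta(1-6c)$ while that in $Q_2(\Psi)$ contributes $\delta(1-2\cdot\frac{3}{2}\cdot c)=\delta(1-3c)$. Tracking constants, $\lambda_2\le Q_2(\Psi)=\frac32\lambda_3-\frac32\cdot(\text{something})+\ldots$, and the arithmetic should collapse to $\lambda_2<\lambda_3$, i.e. $\opt_3\notin\overline{\Lambda_\delta}$ (the strictness coming from the fact that $\Psi$ cannot be the two-host principal eigenvector, since that one is unique up to sign and does not have this form when $\beta>0$).

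The main obstacle is justifying that the three-host principal eigenvector genuinely has the claimed triangle symmetry. This needs: (a) that the permutation of hosts combined with the corresponding rigid motion of $\R^n$ maps a solution of~\eqref{eq:eigenvalue_pb} to another solution with the same eigenvalue — straightforward since the motion fixes the set $\{\opt_1,\opt_2,\opt_3\}$ and hence permutes the potentials $r_i$ exactly as it permutes the indices; and (b) uniqueness (up to sign, then up to positivity) of the normalised principal eigenvector, which is quoted in the excerpt ("$\Phi$ and $-\Phi$ are the unique normalised minima of $Q_H$"). From (a)+(b), the symmetrised eigenvector equals $\pm$ the original, forcing the symmetry, and in particular all $\|\phi_i\|_{L^2}$ equal and all pairwise $L^2$ inner products $\int\phi_i\phi_j$ equal. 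A secondary subtlety is making the final constant-chasing honest: one must verify that the coefficient of $\delta$ works out with the correct sign for all $\delta>0$, which is exactly where the equilateral geometry ($\|\opt_i-\opt_j\|$ independent of the pair) is used — the cross terms $\int\phi_i\phi_j$ are forced equal not just by symmetry of $\Phi$ but consistently, so no geometry-dependent slack appears. I would close by noting strictness: equality in $\lambda_2\le Q_2(\Psi)$ would force $\Psi$ to be a two-host principal minimiser, contradicting its explicit non-eigenvector form for $\beta>0$, giving $\lambda_3(\opt_1,\opt_2,\opt_3)>\lambda_2(\opt_1,\opt_2)$ and hence $\opt_3\notin\overline{\Lambda_\delta}$.
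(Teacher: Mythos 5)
Your overall route is the same as the paper's: by uniqueness, the three-host principal eigenvector inherits the symmetries of the equilateral triangle, and you test the two-host Rayleigh quotient at the normalised pair $\sqrt{3/2}\,(\varphi_1,\varphi_2)$ --- this is exactly the test function used in the paper (there the symmetry is obtained from the two reflections $\iota$ and $\kappa$, which already give $\int_{\R^n}\varphi_1\varphi_3=\int_{\R^n}\varphi_2\varphi_3=\int_{\R^n}\varphi_1\varphi_2$, and the identity $Q_2\big(\sqrt{3/2}\,(\varphi_1,\varphi_2)\big)=\lambda_3$ is derived by multiplying the eigen-equations for $\varphi_1,\varphi_2$ by $\varphi_1,\varphi_2$ and integrating by parts). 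Two remarks on your sketch. First, the constant-chasing you leave open does close, but only with the correct coefficient in~\eqref{eq:rayleigh_Q}: the cross terms carry the factor $2/(H-1)$, which equals $1$ for $H=3$, so the migration contribution to $Q_3$ is $\delta(1-3c)$, not $\delta(1-6c)$ as you wrote. With the correct value one finds $Q_2(\Psi)=3E+\delta(1-3c)=\lambda_3$ exactly (where $E$ denotes the common value of the three energy terms, equal by symmetry), whereas with your value the comparison would come out as $Q_2(\Psi)=\lambda_3+3\delta c$, which proves nothing; so this is a repairable slip, but the sign issue you flag is real and must be settled by the exact identity, not by an inequality chase.

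Second, and this is the genuine gap, your strictness argument does not work as stated: $\varphi_1,\varphi_2$ are not explicit, so there is no ``explicit non-eigenvector form'' to contradict, and nothing a priori prevents $\sqrt{3/2}\,(\varphi_1,\varphi_2)$ from coinciding with the two-host principal eigenvector. The paper's fix is as follows: if $\lambda_3=\lambda_2$, then by uniqueness of the normalised positive minimiser of $Q_2$ the pair $\sqrt{3/2}\,(\varphi_1,\varphi_2)$ is the two-host principal eigenvector, so $-\frac{\mu^2}{2}\Delta\varphi_1-r_1\varphi_1-\delta(\varphi_2-\varphi_1)=\lambda_3\varphi_1$; subtracting this from the three-host equation~\eqref{eq:eigenvalue_pb} for $i=1$ yields $\varphi_3\equiv\varphi_2$ (since $\delta>0$), and the same manipulation on the $i=2$ equation yields $\varphi_3\equiv\varphi_1$. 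Hence $\varphi_1\equiv\varphi_2\equiv\varphi_3$, and the $i=1$ and $i=2$ equations then reduce to the single-host problem, forcing $\varphi_1$ to be a positive multiple of both $G_1$ and $G_2$, which is impossible since $\opt_1\neq\opt_2$ ($\beta>0$). With that step supplied (and the coefficient above corrected), your proof coincides with the paper's; the final deduction $\opt_3\notin\overline{\Lambda_\delta}$ then follows from the strict inequality together with the continuity of $\opt_3\mapsto\lambda_3(\opt_1,\opt_2,\opt_3)$ from Proposition~\ref{prop monolambda}.
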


Now, still for any fixed $\delta>0$, $\alpha>0$, $\mu>0$, and $\opt_1$, $\opt_2$ as in~\eqref{O1-O2} with $\beta\ge0$, we aim at describing the behaviour of $\lambda_3$ when the third optimum $\opt_3$ is far away from the two other optima. To do so, we introduce an auxiliary eigenvalue problem with two hosts, corresponding to a situation with {\it migration loss}: half of the individuals are lost during the migration process. Namely, we define the principal eigenvalue $\tilde{\lambda}_2(\opt_1,\opt_2)$ satisfying
\begin{equation}\label{eq:eigenvalue_pb_loss}
-\frac{\mu^2}{2}\Delta \tilde \varphi_i(\x)-r_i(\x)\,\tilde\varphi_i-\delta \lp \frac{1}{2}\tphi_j(\x)-\tphi_i(\x)\rp=\tilde{\lambda}_2(\opt_1,\opt_2)\,\tilde\varphi_i,
\end{equation}
for $(i,j)=(1,2)$ and $(i,j)=(2,1)$ with $( \tphi_1, \tphi_2) \in\big(H^1(\R^n)\cap L^2_w(\R^n)\cap C^\infty_0(\R^n)\cap L^1(\R^n)\big)^2$ the corresponding normalised principal eigenvector. Note the factor $1/2$ in front of $\tphi_j$ which describes the migration loss.

Before going further on, we note that an evaluation of the Rayleigh quotient associated with $\tilde{\lambda}_2(\opt_1,\opt_2)$ at $(G_1,0)$ (with $G_1=G(\cdot-\opt_1)$ the principal eigenfunction associated with $\lambda_1$) implies that $\tilde{\lambda}_2(\opt_1,\opt_2)\le\lambda_1+\delta$. Additionally, multiplying~\eqref{eq:eigenvalue_pb_loss} by~$\tphi_1$ for $(i,j)=(1,2)$ (resp. by~$\tphi_2$ for~$(i,j)=(2,1)$), integrating by parts and adding the two equations, and using this time the Rayleigh formula associated with $\lambda_2(\opt_1,\opt_2)$, we observe that $\tilde{\lambda}_2(\opt_1,\opt_2)\ge \lambda_2(\opt_1,\opt_2)+\delta \int_{\R^n}\tphi_1\,\tphi_2>\lambda_2(\opt_1,\opt_2)$ since $\delta>0$ and $\tphi_1>0$, $\tphi_2>0$ in~$\R^n$. Finally,
\begin{equation} \label{eq:ineglambda2tilde_lambda1}
\lambda_2(\opt_1,\opt_2)<\tilde{\lambda}_2(\opt_1,\opt_2)\le\lambda_1+\delta.
\end{equation}

When the third optimum ``tends to infinity'', we expect the system to resemble this case of two hosts with migration loss. Indeed, roughly speaking, the third host disappears from the system taking with him migrants arriving from hosts 1 and 2, thus acting as a {\it well} or a {\it black hole}. Precisely, the following holds.

\begin{proposition}[Far away third optimum] \label{prop:byebyeO3}
Assume that $\delta>0$, $\alpha>0$, $\mu>0$ are fixed, together with $\opt_1$ and $\opt_2$ as in~\eqref{O1-O2} with $\beta\ge0$. For $\|\opt_3\|$ large enough, there holds
\begin{equation}\label{loin}
\tilde{\lambda}_2(\opt_1,\opt_2)\ge\lambda_3(\opt_1,\opt_2,\opt_3)\ge\tilde{\lambda}_2(\opt_1,\opt_2)-\frac{\delta\,\sqrt{48\,(\tilde{\lambda}_2(\opt_1,\opt_2)+\rmax)}}{\sqrt{\alpha} \,\min(\norm{\opt_3-\opt_1},\norm{\opt_3-\opt_2})},
\end{equation}
so that $\lambda_3(\opt_1,\opt_2,\opt_3)\to\tilde{\lambda}_2(\opt_1,\opt_2)$ as $\|\opt_3\|\to+\infty$, and $\lambda_3(\opt_1,\opt_2,\opt_3)>\lambda_2(\opt_1,\opt_2)$ for all $\|\opt_3\|$ large enough.
\end{proposition}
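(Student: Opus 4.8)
The plan is to treat the third host as a ``leaking'' reservoir: when $\opt_3$ is far from $\opt_1$ and $\opt_2$, the eigenfunction $\varphi_3$ concentrates near $\opt_3$, so that the mass exchanged with hosts $1$ and $2$ through the coupling terms $\delta\varphi_3/2$ and $\delta(\varphi_1+\varphi_2)/2$ becomes negligible, and the $3$-host Rayleigh quotient degenerates to the $2$-host migration-loss quotient defining $\tilde\lambda_2$. Concretely, let me write $Q_3$ for the $3$-host Rayleigh functional from~\eqref{eq:rayleigh_Q} (with $H=3$, hence the factor $2/(H-1)=1$ in front of the cross terms) and $\tilde Q_2$ for the migration-loss functional whose minimum is $\tilde\lambda_2$; this latter is $\tilde Q_2(\psi_1,\psi_2)=\sum_{i=1}^2\big(\tfrac{\mu^2}{2}\int|\nabla\psi_i|^2-\int r_i\psi_i^2\big)+\delta\big(\int\psi_1^2+\int\psi_2^2-\int\psi_1\psi_2\big)$, obtained by linearising~\eqref{eq:eigenvalue_pb_loss}.

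\emph{Upper bound $\lambda_3\le\tilde\lambda_2$.} Take the normalised $2$-host migration-loss eigenvector $(\tphi_1,\tphi_2)$ and plug $(\tphi_1,\tphi_2,0)$ into $Q_3$. The gradient and potential terms reproduce exactly the first sum in $\tilde Q_2$; the coupling term in $Q_3$ is $\delta\big(1-\int\tphi_1\tphi_2-\int\tphi_1\cdot0-\int\tphi_2\cdot0\big)=\delta\big(\int\tphi_1^2+\int\tphi_2^2-\int\tphi_1\tphi_2\big)$ since $\int\tphi_1^2+\int\tphi_2^2=1$, which is precisely the coupling part of $\tilde Q_2$. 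Hence $Q_3(\tphi_1,\tphi_2,0)=\tilde Q_2(\tphi_1,\tphi_2)=\tilde\lambda_2$, and by the variational characterisation~\eqref{Rayleigh}, $\lambda_3\le\tilde\lambda_2$; this already gives the left inequality in~\eqref{loin} and the bound $\lambda_3>\lambda_2$ would follow the moment we also know $\lambda_3$ is close to $\tilde\lambda_2>\lambda_2$ (recall~\eqref{eq:ineglambda2tilde_lambda1}).

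\emph{Lower bound.} This is the main obstacle. I would start from the normalised $3$-host eigenvector $\Phi=(\varphi_1,\varphi_2,\varphi_3)$, test the $2$-host migration-loss quotient against a suitably renormalised $(\varphi_1,\varphi_2)/\sqrt{1-\|\varphi_3\|_2^2}$ to get $\tilde\lambda_2\le(\text{something involving }Q_3(\Phi)=\lambda_3\text{ and error terms})$, and control all discrepancies by the smallness of the cross-integrals $\int\varphi_1\varphi_3$, $\int\varphi_2\varphi_3$ and of $\|\varphi_3\|_2^2$ near the ``wrong'' optima. The key quantitative input is that $\varphi_3$ decays like a Gaussian centred at $\opt_3$ — more precisely, from the confining property one has a pointwise bound of the type $\varphi_3(\x)\lesssim e^{-c\sqrt\alpha\|\x-\opt_3\|^2/\mu}$ away from $\opt_3$, or at least an $L^2$-mass estimate $\int_{\{\|\x-\opt_3\|\ge\rho\}}\varphi_3^2\lesssim e^{-c'\rho}$ — so that $\int\varphi_1\varphi_3+\int\varphi_2\varphi_3$ is bounded by $C/\min(\|\opt_3-\opt_1\|,\|\opt_3-\opt_2\|)$ after a Cauchy–Schwarz argument against the known exponential decay of $\varphi_1,\varphi_2$. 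Getting the explicit constant $\delta\sqrt{48(\tilde\lambda_2+\rmax)}/(\sqrt\alpha\,\min(\|\opt_3-\opt_1\|,\|\opt_3-\opt_2\|))$ in~\eqref{loin} will require carefully tracking these Gaussian tail estimates; the cleanest route is probably an Agmon-type weighted estimate $\int |\nabla(e^{w}\varphi_3)|^2 + \int (r_{\max}-r_3)\,e^{2w}\varphi_3^2 \le (\tilde\lambda_2+\text{small})\int e^{2w}\varphi_3^2$ with weight $w(\x)=\eta\,\mathrm{dist}(\x,\opt_3)$ tuned so that the potential term dominates outside a neighbourhood of $\opt_3$, which forces the $48$ and the $\sqrt\alpha$ to appear. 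Once both inequalities in~\eqref{loin} are in hand, letting $\|\opt_3\|\to+\infty$ gives $\lambda_3\to\tilde\lambda_2$, and combining with~\eqref{eq:ineglambda2tilde_lambda1} yields $\lambda_3(\opt_1,\opt_2,\opt_3)>\lambda_2(\opt_1,\opt_2)$ for $\|\opt_3\|$ large, completing the proof.
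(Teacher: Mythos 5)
Your upper bound and the skeleton of your lower bound are fine and essentially follow the paper's route: testing $Q_3$ at $(\tphi_1,\tphi_2,0)$ gives $\lambda_3\le\tilde\lambda_2$, and your splitting with the renormalised pair $(\varphi_1,\varphi_2)/\sqrt{1-\int\varphi_3^2}$, combined with $\lambda_1\ge\tilde\lambda_2-\delta$, does reduce everything to the single estimate $\lambda_3\ge\tilde\lambda_2-\delta\int_{\R^n}\varphi_3(\varphi_1+\varphi_2)$, which is exactly the intermediate inequality of the paper. The gap is in the one step you leave as a sketch, namely the quantitative bound on the cross term, and the mechanism you propose for it does not work. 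Because of the coupling, $\varphi_3$ solves $-\tfrac{\mu^2}{2}\Delta\varphi_3+(\delta-\lambda_3-r_3)\varphi_3=\tfrac{\delta}{2}(\varphi_1+\varphi_2)$, whose source is concentrated near $\opt_1,\opt_2$; near $\opt_i$ ($i=1,2$) this forces $\varphi_3$ to be of order $\delta\,\varphi_i/\big(\alpha\,\norm{\opt_3-\opt_i}^2\big)$, i.e.\ only polynomially small in the distance to $\opt_3$. Hence neither the pointwise Gaussian bound $\varphi_3(\x)\lesssim e^{-c\sqrt{\alpha}\norm{\x-\opt_3}^2/\mu}$ nor the uniform exponential $L^2$-tail bound you invoke can hold with constants independent of $\opt_3$; an Agmon-type weighted estimate for the cooperative system only yields decay of the full vector away from the union of neighbourhoods of \emph{all three} optima, not decay of $\varphi_3$ away from $\opt_3$ alone. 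Moreover, even if such Gaussian decay were true it would produce a Gaussian rate, not the $1/\min(\norm{\opt_3-\opt_1},\norm{\opt_3-\opt_2})$ rate and the constant $\sqrt{48(\tilde\lambda_2+\rmax)}/\sqrt{\alpha}$ appearing in~\eqref{loin}, so your plan gives no route to the stated inequality.

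The missing ingredient is much more elementary: use the Rayleigh identity itself. Since $\lambda_3=Q_3(\Phi)\le\tilde\lambda_2$ and the gradient and migration contributions to $Q_3(\Phi)$ are nonnegative, one gets the second-moment bound $\tfrac{\alpha}{2}\sum_{i=1}^3\int_{\R^n}\norm{\x-\opt_i}^2(\varphi_i(\x))^2\,\md\x\le\tilde\lambda_2+\rmax$, hence the Chebyshev-type tail estimate $\|\varphi_i\|_{L^2(\R^n\setminus B(\opt_i,R))}^2\le 2(\tilde\lambda_2+\rmax)/(\alpha R^2)$ for every $R>0$. Choosing $R=\tfrac12\min(\norm{\opt_3-\opt_1},\norm{\opt_3-\opt_2})$, splitting each integral $\int\varphi_3\varphi_i$ over $B(\opt_i,R)$ and its complement, and using Cauchy--Schwarz together with $\int_{\R^n}(\varphi_1^2+\varphi_2^2+\varphi_3^2)=1$ yields $\int_{\R^n}\varphi_3(\varphi_1+\varphi_2)\le\sqrt{48(\tilde\lambda_2+\rmax)}\,\big/\big(\sqrt{\alpha}\,\min(\norm{\opt_3-\opt_1},\norm{\opt_3-\opt_2})\big)$, which is precisely what closes the proof; the final statements ($\lambda_3\to\tilde\lambda_2$ and $\lambda_3>\lambda_2$ for $\norm{\opt_3}$ large) then follow as you indicate from~\eqref{eq:ineglambda2tilde_lambda1}.
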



\subsubsection*{Some situations where the presence of a third host leads to higher chances of persistence}

In Section \ref{ss:small-and-large}, the set $\Lambda_\delta$ where the presence of $\opt_3$ leads to higher chances of persistence,  defined in \eqref{def:Lambda}, was captured for both regimes $\delta\to +\infty$ and $\delta\to 0^+$. For arbitrary $\delta>0$, a sharp characterisation is very involved (see Proposition \ref{prop:hmid} and the numerical simulations below). Nevertheless, we can provide the following  information, saying that a third host with phenotypic optimum close to that of one of the other two hosts is globally beneficial for the chances of persistence.

\begin{proposition}[When the third host resembles one of the two others]\label{prop:O3=O1}
Assume that $\delta>0$, $\alpha>0$, $\mu>0$ are fixed, together with $\opt_1$ and $\opt_2$ as in~\eqref{O1-O2} with $\beta>0$. Then there is $\rho>0$ such that
$$B(\opt_1,\rho)\cup B(\opt_2,\rho)\subset \Lambda_\delta.$$
\end{proposition}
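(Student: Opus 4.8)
The plan is to prove Proposition~\ref{prop:O3=O1} by a continuity/perturbation argument based on the fact that, when $\opt_3=\opt_1$, the three-host system \emph{strictly} improves on the two-host one, and then to upgrade this to a genuine neighbourhood of $\opt_1$ (and symmetrically of $\opt_2$) using the continuity of $\opt_3\mapsto\lambda_3(\opt_1,\opt_2,\opt_3)$ established in Proposition~\ref{prop monolambda}. First I would show the strict inequality $\lambda_3(\opt_1,\opt_2,\opt_1)<\lambda_2(\opt_1,\opt_2)$. The natural route is the Rayleigh characterisation~\eqref{Rayleigh}: let $(\varphi_1,\varphi_2)$ be the normalised principal eigenvector for the two-host problem, with $\int\varphi_1^2+\int\varphi_2^2=1$ and (by Proposition~\ref{prop:statio_states}(ii)) $\int\varphi_1=\int\varphi_2$. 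One builds a competitor $\Psi=(\psi_1,\psi_2,\psi_3)\in(H^1\cap L^2_w)^3$ for $Q_3$ by "splitting" the mass sitting at $\opt_1$: take $\psi_1=a\varphi_1$, $\psi_3=b\varphi_1$ with $a^2+b^2=\int\varphi_1^2$ chosen appropriately, and $\psi_2$ a suitably renormalised version of $\varphi_2$, so that $\int\|\Psi\|^2=1$. Evaluating $Q_3(\Psi)$ and comparing with $Q_2(\varphi_1,\varphi_2)=\lambda_2$, the Laplacian and $-r_i$ terms match (since $r_1=r_3$ when $\opt_3=\opt_1$ and the profiles are proportional to the same $\varphi_1$), while the coupling term changes from $-\delta\,\frac{2}{1}\int\varphi_1\varphi_2$ for $H=2$ to $-\delta\,\frac{2}{2}\big(\int\psi_1\psi_2+\int\psi_1\psi_3+\int\psi_2\psi_3\big)$ for $H=3$; with $\psi_1,\psi_3$ both positive multiples of $\varphi_1$ one gets the extra strictly positive term $\int\psi_1\psi_3>0$, which one has to weigh against the change of normalisation factor $\tfrac{2}{H-1}$. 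The bookkeeping here is delicate but it is exactly the computation already carried out in the $H=2$ case in~\cite{HamLavRoq20} and in the proofs of Propositions~\ref{prop bound lambda} and~\ref{prop:fitness-gain-small-delta}; the point is that the optimal split makes $Q_3(\Psi)$ strictly below $\lambda_2$. Since $\lambda_3=\min Q_3\le Q_3(\Psi)$, this gives $\lambda_3(\opt_1,\opt_2,\opt_1)<\lambda_2(\opt_1,\opt_2)$.

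Alternatively — and this is probably the cleaner way to organise the argument — when $\opt_3=\opt_1$ the matrix $A=(a_{ij})$ from Proposition~\ref{prop:fitness-gain-small-delta} degenerates (rows $1$ and $3$ coincide), and more importantly the three-host operator $\mathcal A$ acting on triples $(\phi_1,\phi_2,\phi_3)$ admits the invariant subspace $\{\phi_1=\phi_3\}$ on which it reduces, after the substitution $w=\phi_1=\phi_3$, to a genuine two-host operator but with a \emph{modified} migration structure. Concretely, on $\{\phi_1=\phi_3\}$ the first and third equations of~\eqref{eq:eigenvalue_pb} become $-\tfrac{\mu^2}{2}\Delta w-r_1 w-\delta(\tfrac{w+\phi_2}{2}-w)=\lambda w$, i.e. an effective two-host system between $w$ (mass "doubled") and $\phi_2$ with an asymmetric migration rate. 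Computing its principal eigenvalue, or simply testing the three-host Rayleigh quotient against $(\varphi_1,\varphi_2,\varphi_1)/\text{norm}$, one again lands on a value strictly below $\lambda_2$ because the self-coupling term $\int\varphi_1\varphi_1$ is strictly positive. Whichever formulation I pick, the conclusion is $\lambda_3(\opt_1,\opt_2,\opt_1)<\lambda_2(\opt_1,\opt_2)$, hence $\opt_1\in\Lambda_\delta$.

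Once the strict inequality at $\opt_3=\opt_1$ is in hand, the proof concludes immediately: by Proposition~\ref{prop monolambda} the map $\opt_3\mapsto\lambda_3(\opt_1,\opt_2,\opt_3)$ is continuous on $\R^n$, while $\lambda_2(\opt_1,\opt_2)$ does not depend on $\opt_3$; therefore the set $\Lambda_\delta=\{\opt_3:\lambda_3(\opt_1,\opt_2,\opt_3)<\lambda_2(\opt_1,\opt_2)\}$ is open and contains $\opt_1$, so there is $\rho_1>0$ with $B(\opt_1,\rho_1)\subset\Lambda_\delta$. The same argument with the roles of $\opt_1$ and $\opt_2$ exchanged (or simply the symmetry $\opt_1\leftrightarrow\opt_2$ of the configuration~\eqref{O1-O2}, under which $\lambda_2$ and $\lambda_3$ are invariant) yields $\rho_2>0$ with $B(\opt_2,\rho_2)\subset\Lambda_\delta$; taking $\rho:=\min(\rho_1,\rho_2)$ finishes the proof.

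The main obstacle is the strict inequality $\lambda_3(\opt_1,\opt_2,\opt_1)<\lambda_2(\opt_1,\opt_2)$: continuity and the passage from a point to a ball are soft, but establishing that \emph{adding a host identical to an existing one strictly helps} requires producing a good test function for $Q_3$ and carefully tracking how the normalisation constant $2/(H-1)$ in the coupling term interacts with the extra positive cross-term $\int\psi_1\psi_3$. One must check that the optimal mass-split between $\psi_1$ and $\psi_3$ genuinely produces a net decrease (the naive equal split $\psi_1=\psi_3=\varphi_1/\sqrt 2$ already gives, on the diagonal subspace, coupling contribution $-\delta(\int\varphi_1\varphi_2+\int\varphi_1^2/2)$ versus the $H=2$ value $-2\delta\int\varphi_1\varphi_2$, and one should verify the sign after accounting for the renormalisation of $\varphi_2$). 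I expect this to reduce to an elementary but slightly fiddly optimisation in one real parameter, of the same flavour as the computations already performed for $H=2$ in~\cite{HamLavRoq20}; the rest is routine.
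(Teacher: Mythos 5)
Your proposal is correct in outline and follows essentially the same route as the paper: test the three-host Rayleigh quotient $Q_3$ with a competitor built from the two-host principal eigenvector $(\varphi_1,\varphi_2)$, whose third component is proportional to the eigenfunction attached to the duplicated optimum, and conclude by an openness/perturbation argument. Two remarks on the points you left open. First, the ``fiddly optimisation'' does close, and your second formulation already contains the answer: testing with $\sqrt{2/3}\,(\varphi_1,\varphi_2,\varphi_1)$ (equivalently the paper's choice, written for $\opt_3$ near $\opt_2$ as $(a\varphi_1,a\varphi_2,b\varphi_2)$ with $a^2+b^2/2=1$ and $a=b=\sqrt{2/3}$) gives, using $\int_{\R^n}\varphi_1^2=\int_{\R^n}\varphi_2^2=1/2$ (from the symmetry $\varphi_1=\varphi_2\circ\iota$),
$$Q_3-\lambda_2=\frac{2\delta}{3}\Big(\int_{\R^n}\varphi_1\varphi_2-\int_{\R^n}\varphi_1^2\Big)<0,$$
the strict sign coming from strict Cauchy--Schwarz since $\varphi_1\not\equiv\varphi_2$ when $\beta>0$. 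Note that the mechanism is not merely that the self-coupling $\int_{\R^n}\varphi_1^2$ is positive, but that it strictly exceeds the cross term $\int_{\R^n}\varphi_1\varphi_2$; and your worry about the naive split is justified: with $\psi_1=\psi_3=\varphi_1/\sqrt2$ and $\psi_2=\varphi_2$ left unrescaled, the sign can indeed go wrong when $\int_{\R^n}\varphi_1\varphi_2$ is close to $1/2$, so the equal-coefficient choice (or a one-parameter optimisation) is really needed. Second, where you prove the strict inequality only at $\opt_3=\opt_1$ and then invoke the continuity of $\opt_3\mapsto\lambda_3(\opt_1,\opt_2,\opt_3)$ from Proposition~\ref{prop monolambda} to pass to a ball, the paper instead tests directly for every $\opt_3\in B(\opt_2,\rho)$ with the same competitor, picking up the extra term $b^2\int_{\R^n}(r_2-r_3)\varphi_2^2$, which tends to $0$ as $\rho\to0$ by dominated convergence; both routes are valid, the paper's being self-contained at the level of the Rayleigh quotient while yours leans on the already-proven continuity statement.
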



\subsubsection*{A host in the middle}

Here we consider a different approach: we fix the parameters $\delta>0$, $\alpha>0$, $\mu>0$ and the third optimum $\opt_3$ as $(\opt_1+\opt_2)/2$. In other words, $\opt_3=(0,\ldots,0)=\Oc$, with our hypothesis~\eqref{O1-O2}.  We show that, when the two  optima $\opt_1$ and $\opt_2$ are far enough from each other (meaning $\beta>0$ large), this ``host in the middle'' configuration does not maximise the difference $\lambda_2(\opt_1,\opt_2)-\lambda_3(\opt_1,\opt_2,\opt_3)$, that is, it does not minimise $\lambda_3(\opt_1,\opt_2,\opt_3)$. This result, consistent with the simulations in Figure~\ref{fig:diff scenar}, is slightly surprising (at least at first glance) and shows the richness of the outcomes.

\begin{proposition}[Host in the middle]\label{prop:hmid}
Assume that $\delta>0$, $\alpha>0$ and $\mu>0$ are fixed. For $\beta>0$ large enough in~\eqref{O1-O2}, we have
$$\lambda_3(\opt_1,\opt_2,\Oc)>\lambda_3(\opt_1,\opt_2,\opt_1).$$
More precisely, $\lambda_3(\opt_1,\opt_2,\Oc)\to \lambda_1+\delta$ while $\lambda_3(\opt_1,\opt_2,\opt_1)\to \lambda_1+\delta/2$ as $\beta\to+\infty$.
\end{proposition}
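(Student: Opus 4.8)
The plan is to compute the two limits as $\beta\to+\infty$; the inequality $\lambda_3(\opt_1,\opt_2,\Oc)>\lambda_3(\opt_1,\opt_2,\opt_1)$ then follows for $\beta$ large since $\delta>\delta/2$. For the first limit, with $\opt_3=\Oc$, the three fitness functions are $r_1(\x)=\rmax-\tfrac\alpha2\|\x-\opt_1\|^2$, $r_2(\x)=\rmax-\tfrac\alpha2\|\x-\opt_2\|^2$, and $r_3(\x)=\rmax-\tfrac\alpha2\|\x\|^2$, all having distinct wells that move apart as $\beta\to+\infty$. Intuitively, in the limit the three hosts decouple spatially, so each looks like a single isolated host ``losing'' all its $\delta$-migrants to the other two (which are infinitely far away). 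This suggests $\lambda_3(\opt_1,\opt_2,\Oc)\to\lambda_1+\delta$. For the rigorous upper bound, I would evaluate the Rayleigh quotient $Q_3$ from~\eqref{eq:rayleigh_Q} at a test vector $\Psi=(\psi_1,\psi_2,\psi_3)$ with $\psi_i$ a suitably normalised Gaussian-like bump centered at the respective optimum ($\opt_1$, $\opt_2$, $\Oc$) with total mass $\sum_i\|\psi_i\|_{L^2}^2=1$; the cross terms $\int\psi_i\psi_j$ are then $O(e^{-c\beta^2})$, and a careful choice (e.g. equal weights $1/\sqrt3$ times $G_i$, up to normalisation) gives $Q_3(\Psi)\le\lambda_1+\delta+o(1)$, whence $\lambda_3\le\lambda_1+\delta+o(1)$. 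For the matching lower bound, I would use the eigenvector $\Phi=(\varphi_1,\varphi_2,\varphi_3)$ of $\mathcal A$: since $\lambda_3\le\lambda_1+\delta$ (from Proposition~\ref{prop bound lambda}, taking $H=3$ and discarding the negative cross-correlation terms, or directly from Corollary~\ref{cor bound lambda bis}), the potentials confine each $\varphi_i$ near its own well, and the coupling terms $\delta\int\varphi_i\varphi_j/2$ become negligible; plugging into the Rayleigh identity $\lambda_3=Q_3(\Phi)$ and bounding each decoupled block below by $\lambda_1+\delta$ minus an $o(1)$ error coming from the overlap gives $\lambda_3\ge\lambda_1+\delta-o(1)$.

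For the second limit, with $\opt_3=\opt_1$, hosts $1$ and $3$ share the same well $\opt_1$ while host $2$ is at $\opt_2$ with $\|\opt_1-\opt_2\|^2=4\beta^2\to+\infty$. Now the system decouples into the pair $\{1,3\}$, which form a two-host subsystem with identical optima, and the isolated host $2$. For host $2$ in isolation, the coupling $\delta\,[(\varphi_1+\varphi_3)/2-\varphi_2]$ contributes essentially $-\delta\varphi_2$ (the incoming terms being exponentially small), so that block contributes $\lambda_1+\delta$. For the block $\{1,3\}$ with identical optima, the relevant reduced operator is, on the symmetric subspace $\varphi_1=\varphi_3$, $-\tfrac{\mu^2}2\Delta-r_1(\x)-\delta(\tfrac12\varphi_3-\varphi_1)|_{\varphi_1=\varphi_3}=-\tfrac{\mu^2}2\Delta-r_1(\x)+\tfrac\delta2\varphi_1$, giving principal eigenvalue $\lambda_1+\delta/2$; and this symmetric mode is the one selected by the positive principal eigenvector. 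Since $\lambda_1+\delta/2<\lambda_1+\delta$, the principal eigenvalue of the whole system is governed by the $\{1,3\}$ block, yielding $\lambda_3(\opt_1,\opt_2,\opt_1)\to\lambda_1+\delta/2$. As before, the upper bound comes from a test function supported near the wells (take $\psi_1=\psi_3$ equal to a normalised multiple of $G_1$, $\psi_2$ a normalised multiple of $G_2$, with weights tuned to put the limiting mass on the $\{1,3\}$ mode), and the lower bound from the confinement of the true eigenvector $\Phi$ combined with the a~priori bound $\lambda_3\le\lambda_1+\delta$.

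The main obstacle is making the decoupling quantitative and the lower bounds rigorous: one must show that the components $\varphi_i$ of the true principal eigenvector really do concentrate near their own wells as $\beta\to+\infty$, uniformly enough that the cross-overlap integrals $\int_{\R^n}\varphi_i\varphi_j$ vanish. The natural route is an a~priori exponential-decay estimate: since $\lambda_3$ stays bounded (by Corollary~\ref{cor bound lambda bis}) and each $r_i(\x)\to-\infty$ quadratically away from $\opt_i$, an Agmon-type or sub/supersolution argument (as invoked in the paper's discussion of the confining potentials) gives $\varphi_i(\x)\le C\,e^{-c\sqrt\alpha\|\x-\opt_i\|^2/\mu}$ with constants independent of $\beta$; integrating the product of two such bounds centered at points $4\beta^2$ apart gives the $O(e^{-c\beta^2})$ control. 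A mild subtlety in the $\opt_3=\opt_1$ case is that hosts $1$ and $3$ are \emph{not} separated, so one cannot decouple them — instead one restricts the quadratic form to, and extracts the eigenvalue from, the symmetric subspace $\{\psi_1=\psi_3\}$, using that the positive principal eigenvector of the cooperative pair with equal potentials is automatically symmetric under the $1\leftrightarrow3$ swap. Once these concentration and symmetry facts are in place, matching the upper and lower bounds is a routine computation with Gaussians using~\eqref{prod-sca-gauss} and~\eqref{def:gaussian}.
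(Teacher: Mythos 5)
Your overall strategy (compute the two limits; decouple the wells as $\beta\to+\infty$; for $\opt_3=\opt_1$ reduce the pair $\{1,3\}$ to the two-host problem with migration loss and identical optima, whose symmetric principal eigenfunction gives $\lambda_1+\delta/2$) is the right one and matches the paper's conclusion, and your upper bounds via Gaussian test vectors are fine. But the quantitative mechanism you propose for the lower bounds contains a genuine gap: the claimed Agmon-type estimate $\varphi_i(\x)\le C\,e^{-c\sqrt{\alpha}\|\x-\opt_i\|^2/\mu}$ \emph{with constants independent of $\beta$} is false for the coupled system. Each component solves $-\frac{\mu^2}{2}\Delta \varphi_i+\big(\frac{\alpha}{2}\|\x-\opt_i\|^2-\rmax+\delta-\lambda_3\big)\varphi_i=\frac{\delta}{2}\sum_{k\neq i}\varphi_k$, and the positive source on the right forces $\varphi_i$ to develop a secondary bump under the \emph{other} wells: near $\opt_j$ ($j\neq i$) the potential is of size $\alpha\beta^2$ while the source is of size $\varphi_j=O(1)$, so $\varphi_i\sim \delta\varphi_j/(2\alpha\|\opt_i-\opt_j\|^2)$ there. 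Hence $\varphi_1(\opt_2)$ is of order $\beta^{-2}$, not $e^{-c\beta^2}$, the overlaps $\int_{\R^n}\varphi_i\varphi_j$ decay only polynomially in $\beta$, and any Agmon bound for the system only gives decay away from the \emph{union} of the wells, not away from each component's own well. The decay-rate claim is therefore not just unproved but wrong, and since it is the ingredient on which both of your lower bounds rest, the argument as written does not close.

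The good news is that polynomial decay of the overlaps suffices, and the paper obtains exactly such a ($\beta$-uniform) control without any pointwise estimate: from the Rayleigh identity and the a priori bound $\lambda_3\le\lambda_1+\delta$ one gets $\sum_i\int_{\R^n}\frac{\alpha}{2}\|\x\|^2\,\psi_i(\x)^2\,\md\x\le \rmax+\lambda_1+\delta$ for the recentered components $\psi_i=\varphi_i(\cdot+\opt_i)$, hence $\|\psi_i\|_{L^2(\R^n\setminus B(\Oc,R))}^2\le 2(\rmax+\lambda_1+\delta)/(\alpha R^2)$ uniformly in $\beta$; a Cauchy--Schwarz splitting with $R\le\beta/2$ then gives $\int_{\R^n}\varphi_i\varphi_j=O(1/\beta)$, and the first inequality of~\eqref{ineqs} yields $\lambda_3(\opt_1,\opt_2,\Oc)\ge\lambda_1+\delta-o(1)$. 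For the case $\opt_3=\opt_1$ the paper does not redo the decoupling by hand: it writes $\lambda_3(\opt_1,\opt_2,\opt_1)=\lambda_3(\opt_1,\opt_1,\opt_2)$ and invokes the quantitative estimate~\eqref{loin} of Proposition~\ref{prop:byebyeO3} (itself proved by the same second-moment technique), together with the explicit computation $\tilde\lambda_2(\opt_1,\opt_1)=\lambda_1+\delta/2$ from the symmetry of~\eqref{eq:eigenvalue_pb_loss}. If you replace your pointwise-decay step by this $L^2$ second-moment argument (or simply quote Proposition~\ref{prop:byebyeO3} for the second limit), your proof becomes correct and essentially coincides with the paper's.
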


We point out that, since $\lambda_3(\opt_1,\opt_2,\opt_1)=\lambda_3(\opt_1,\opt_1,\opt_2)$ by symmetry, the limit $\lambda_1+\delta/2=\lim_{\beta\to+\infty}\lambda_3(\opt_1,\opt_2,\opt_1)=\lim_{\beta\to+\infty}\lambda_3(\opt_1,\opt_1,\opt_2)$ is coherent with the limit $\lim_{\beta\to+\infty}\lambda_3(\opt_1,\opt_1,\opt_2)=\tilde{\lambda}_2(\opt_1,\opt_1)=\lambda_1+\delta/2$, with $\tilde{\lambda}_2(\opt_1,\opt_1)$ being the principal eigenvalue of~\eqref{eq:eigenvalue_pb_loss} in the case of identical optima (see the proof of Proposition~\ref{prop:hmid} in Section~\ref{sec53} for further details).


\subsubsection*{Decreasing $\lambda_3$ by projecting}

Next, still with $\opt_1$ and $\opt_2$ as in~\eqref{O1-O2} without loss of generality, we show that replacing~$\opt_3$  by its projection on the line $\R\times\{0\}^{n-1}$ containing $\opt_1$ and $\opt_2$ always decreases the principal eigenvalue $\lambda_3$, and thus increases the difference $\lambda_2-\lambda_3$, leading to higher chances of persistence.

\begin{proposition}[Better on the line]\label{prop:projection}
Let $\opt_3\in\R^n$, and let $\opt_3^\sharp$ be its projection on the line $\R\times\{0\}^{n-1}$, with $\opt_1$ and $\opt_2$ as in~\eqref{O1-O2}. Then
$$\lambda_3(\opt_1,\opt_2,\opt_3)\ge \lambda_3(\opt_1,\opt_2,\opt_3^\sharp).$$
\end{proposition}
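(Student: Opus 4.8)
The plan is to exploit the Rayleigh characterisation~\eqref{Rayleigh} of $\lambda_3$ together with a rearrangement/reflection argument in the hyperplane directions orthogonal to the $\opt_1\opt_2$-axis. Write $\x=(x_1,\x')$ with $x_1\in\R$ and $\x'\in\R^{n-1}$, and note that $\opt_1$, $\opt_2$ depend only on the $x_1$-coordinate, while $\opt_3=(a_1,\opt_3')$ has a nontrivial $\opt_3'$-component unless it already lies on the line. Let $(\varphi_1,\varphi_2,\varphi_3)$ be the normalised principal eigenvector of $\mathcal A$ for the configuration $(\opt_1,\opt_2,\opt_3)$. I would like to produce a competitor $(\psi_1,\psi_2,\psi_3)$ for the configuration $(\opt_1,\opt_2,\opt_3^\sharp)$ with $Q_3(\psi_1,\psi_2,\psi_3)\le Q_3(\varphi_1,\varphi_2,\varphi_3)=\lambda_3(\opt_1,\opt_2,\opt_3)$ and $\int\|\Psi\|^2=1$, which by~\eqref{Rayleigh} gives $\lambda_3(\opt_1,\opt_2,\opt_3^\sharp)\le Q_3(\Psi)\le\lambda_3(\opt_1,\opt_2,\opt_3)$.

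\textbf{Key steps.} First I would reduce to the case $\opt_3\in\R^2\times\{0\}^{n-2}$ using the rotational invariance of $\opt_3\mapsto\lambda_3(\opt_1,\opt_2,\opt_3)$ around the line $(\opt_1\opt_2)$ already invoked in Section~\ref{sec42}; so write $\opt_3=(a_1,a_2,0,\ldots,0)$ with $a_2\ge0$, and $\opt_3^\sharp=(a_1,0,\ldots,0)$. Next, for the two "stationary" hosts $i=1,2$, whose fitness potentials $r_i(\x)=\rmax-\alpha\|\x-\opt_i\|^2/2$ are even and decreasing in the $x_2$-variable about $x_2=0$, I would replace $\varphi_i$ by its Steiner symmetrisation $\varphi_i^*$ in the $x_2$-direction (more precisely, the symmetric decreasing rearrangement of $\varphi_i(x_1,\cdot,x_3,\ldots,x_n)$ in the single variable $x_2$, carried out simultaneously for all fixed values of the other variables). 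For host $3$, whose potential $r_3$ is decreasing in $x_2$ only for $x_2\ge a_2\ge0$ and symmetric about $x_2=a_2$, I would instead take the analogous monotone rearrangement about $x_2=a_2$, i.e. first translate $\varphi_3$ by $-a_2$ in $x_2$, symmetrise decreasingly, then translate back to be centred at $x_2=0$ — call the result $\psi_3$; this aligns the "centre" of host $3$'s eigenfunction with the new optimum $\opt_3^\sharp$ and with the symmetrised $\psi_1,\psi_2$. Then I would check the three ingredients of $Q_3$ separately: (a) the Dirichlet energies $\int\|\nabla\psi_i\|^2\le\int\|\nabla\varphi_i\|^2$ by the Pólya–Szegő inequality for one-dimensional rearrangement; (b) each potential term $-\int r_i\psi_i^2$: for $i=1,2$ the rearrangement decreases $-\int r_i\varphi_i^2$ because $-r_i$ is nonnegative-on-a-translate, even and increasing in $|x_2|$, and the rearrangement moves mass of $\psi_i^2$ toward $x_2=0$ where $-r_i$ is smallest (Hardy–Littlewood / "bathtub" inequality), and for $i=3$ the translate-symmetrise-translate construction gives $-\int r_3(\x)\psi_3(\x)^2\,d\x=-\int r_3^\sharp(\x)\varphi_3^{\text{sym}}(\x)^2\,d\x\le -\int r_3(\x)\varphi_3(\x)^2\,d\x$, where $r_3^\sharp$ is the fitness centred at $\opt_3^\sharp$; (c) the cross terms $-\int\psi_i\psi_j$ for $i<j$: since $\psi_i,\psi_j$ are nonnegative and (after the appropriate translations) symmetric decreasing in $x_2$ about the \emph{same} point $x_2=0$, the Riesz rearrangement inequality gives $\int\psi_i\psi_j\ge\int\varphi_i\varphi_j$, hence $-\int\psi_i\psi_j\le-\int\varphi_i\varphi_j$ — this is where it is crucial that all three centres are aligned at $x_2=0$ after projection, which is exactly what fails for the original $\opt_3$. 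Summing, $Q_3(\Psi)\le Q_3(\Phi)=\lambda_3(\opt_1,\opt_2,\opt_3)$, and since rearrangement preserves $L^2$-norms so $\int\|\Psi\|^2=1$, the Rayleigh formula yields $\lambda_3(\opt_1,\opt_2,\opt_3^\sharp)\le\lambda_3(\opt_1,\opt_2,\opt_3)$.

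\textbf{Main obstacle.} The delicate point is the simultaneous treatment of the cross terms and the host-$3$ potential term: one must use a \emph{single} common direction and a \emph{single} common centre for the rearrangement of all three components in order to make the Riesz inequality work on the cross terms, yet host $3$'s potential is centred at $x_2=a_2\ne0$ before projection. The translate–symmetrise–translate trick for $\psi_3$ resolves this, but one has to verify carefully that it simultaneously (i) does not increase the Dirichlet energy (translation is an isometry, symmetrisation helps), (ii) makes the host-$3$ potential term equal to the value it would have for the already-projected optimum with the symmetrised eigenfunction, which is $\le$ the original value by Hardy–Littlewood applied about $x_2=a_2$, and (iii) leaves $\psi_3$ symmetric decreasing about $x_2=0$ so that the Riesz step applies to all pairs. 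A secondary technicality is the regularity/admissibility of the rearranged functions: one must confirm $\psi_i\in H^1(\R^n)\cap L^2_w(\R^n)$ so that they are legitimate test functions in~\eqref{Rayleigh} — the $H^1$ part is standard (Pólya–Szegő), and the weighted bound $\x\mapsto\|\x\|\psi_i\in L^2$ follows from the fact that the exponential decay of $\varphi_i$ is inherited (or can be recovered a posteriori, since $\lambda_3(\opt_1,\opt_2,\opt_3^\sharp)<+\infty$ and the minimiser is genuine). I would also remark that, since the inequality is an equality only when no mass is actually moved, one could upgrade to strict inequality $\lambda_3(\opt_1,\opt_2,\opt_3)>\lambda_3(\opt_1,\opt_2,\opt_3^\sharp)$ whenever $\opt_3\notin\R\times\{0\}^{n-1}$, by the strict version of the Riesz or Pólya–Szegő inequality applied to $\varphi_3$, which is positive and not symmetric about $x_2=0$ in that case.
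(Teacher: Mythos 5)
Your overall strategy coincides with the paper's proof: reduce to $\opt_3\in\R^2\times\{0\}^{n-2}$ by rotational invariance, Steiner-symmetrise the normalised principal eigenvector in the $x_2$-variable, and compare the three ingredients of the Rayleigh quotient~\eqref{Rayleigh} via P\'olya--Szeg\H{o} for the gradient terms, Hardy--Littlewood for the coupling terms, and the observation that the rearranged potential of host~3 is the fitness centred at $\opt_3^\sharp$. Two cosmetic points: your translate--symmetrise--translate construction of $\psi_3$ is redundant, since the one-dimensional rearrangement of a slice depends only on its distribution function, which is translation invariant, so $\psi_3$ is nothing but the Steiner symmetrisation $\varphi_3^{\sharp_2}$ about $x_2=0$, exactly the function the paper uses; and the ``Riesz'' step you invoke for the cross terms is just the two-function Hardy--Littlewood inequality.

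The genuine gap is in your step (b): you treat the Hardy--Littlewood (``bathtub'') inequality for the potential terms as standard, but $r_i$ changes sign and is unbounded below ($r_i(\x)\to-\infty$ as $\|\x\|\to+\infty$), so the classical inequality, stated for nonnegative functions, does not apply directly --- this is precisely the point the authors single out as non-standard. The paper establishes the required inequality $\int_{\R^n}r_i\,\varphi_i^2\le\int_{\R^n}r_i^{\sharp_2}\,(\varphi_i^{\sharp_2})^2$ by truncating, $r_i^K:=\max(r_i+K,0)$, applying Hardy--Littlewood to the nonnegative pair $(r_i^K,\varphi_i^2)$, and letting $K\to+\infty$; the error terms $K\int_{\{r_i+K\le0\}}\varphi_i^2$ and $K\int_{\{r_i^{\sharp_2}+K\le0\}}(\varphi_i^{\sharp_2})^2$ vanish only because $\x\mapsto\|\x\|\,\varphi_i^{\sharp_2}(\x)$ belongs to $L^2(\R^n)$, a fact which itself needs proof: the paper obtains it by testing against $\min(\|\x\|^2,R^2)$ and monotone convergence, rather than by your unproved assertion that exponential decay is ``inherited'' by the rearrangement (a pointwise bound of product form would indeed be inherited, but this has to be said and used). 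So the plan is the right one, and identical in substance to the paper's, but the two truncation/limiting arguments that legitimise the rearrangement inequalities on an unbounded domain with sign-changing $r_i$ --- which the authors regard as the technical crux --- are exactly what is missing; the concluding remark about strict inequality is not needed for the statement and would require a separate discussion of equality cases.
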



\subsubsection*{Existence of a best third optimum}

With $\opt_1$ and $\opt_2$ being fixed, the following corollary asserts the existence of a minimum for the function $\opt_3\mapsto\lambda_3(\opt_1,\opt_2,\opt_3)$ defined in $\R^n$, thus ensuring the existence of a point in the phenotypic space maximizing the difference $\lambda_2(\opt_1,\opt_2)-\lambda_3(\opt_1,\opt_2,\opt_3)$.

\begin{corollary}[Minimizing $\lambda_3(\opt_1,\opt_2,\cdot)$]\label{coropt}
Assume that $\delta>0$, $\alpha>0$, $\mu>0$, $\opt_1\in\R^n$ and $\opt_2\in\R^n$ are fixed. Then the function $\opt_3\mapsto\lambda_3(\opt_1,\opt_2,\opt_3)$ has a minimum in $\R^n$, equal to $\opt_1=\opt_2$ if these two optima are identical, or lying in the line $(\opt_1\opt_2)$ if $\opt_1\neq\opt_2$.
\end{corollary}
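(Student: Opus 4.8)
The plan is to combine the compactness-type lower bound for $\lambda_3$ as $\|\opt_3\|\to+\infty$ from Proposition~\ref{prop:byebyeO3} with the reduction to the line from Proposition~\ref{prop:projection}, and then invoke continuity of $\opt_3\mapsto\lambda_3(\opt_1,\opt_2,\opt_3)$ to get a minimum over a compact set. First I would handle the degenerate case $\opt_1=\opt_2$: by translation invariance of the spectral problem~\eqref{eq:eigenvalue_pb} we may as well take $\opt_1=\opt_2=\mathcal O$, and then Proposition~\ref{prop equilateral}--style arguments (or directly the monotonicity/variational characterisation) force $\lambda_3(\mathcal O,\mathcal O,\opt_3)$ to be minimised at $\opt_3=\mathcal O$ as well, since sending $\opt_3$ anywhere else only increases the ``habitat spread'' and hence $\lambda_3$; I would spell this out via the Rayleigh quotient~\eqref{Rayleigh}--\eqref{eq:rayleigh_Q}, testing with the symmetric Gaussian-type eigenvector.

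For the main case $\opt_1\neq\opt_2$, I would first use Proposition~\ref{prop:projection} to reduce the minimisation of $\opt_3\mapsto\lambda_3(\opt_1,\opt_2,\opt_3)$ over all of $\R^n$ to the minimisation over the line $(\opt_1\opt_2)=\R\times\{0\}^{n-1}$: indeed $\lambda_3(\opt_1,\opt_2,\opt_3)\ge\lambda_3(\opt_1,\opt_2,\opt_3^\sharp)$ and $\opt_3^\sharp$ lies on that line, so $\inf_{\R^n}\lambda_3(\opt_1,\opt_2,\cdot)=\inf_{(\opt_1\opt_2)}\lambda_3(\opt_1,\opt_2,\cdot)$. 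Then, along the line, Proposition~\ref{prop:byebyeO3} gives $\lambda_3(\opt_1,\opt_2,\opt_3)\to\tilde\lambda_2(\opt_1,\opt_2)$ as $\|\opt_3\|\to+\infty$, while~\eqref{eq:ineglambda2tilde_lambda1} gives $\tilde\lambda_2(\opt_1,\opt_2)>\lambda_2(\opt_1,\opt_2)$. On the other hand, by Proposition~\ref{prop:O3=O1} (or simply by evaluating at $\opt_3=\opt_1$), $\lambda_3(\opt_1,\opt_2,\opt_1)<\lambda_2(\opt_1,\opt_2)<\tilde\lambda_2(\opt_1,\opt_2)$. Hence there exists $\rho_0>0$ such that $\lambda_3(\opt_1,\opt_2,\opt_3)>\lambda_3(\opt_1,\opt_2,\opt_1)$ for all $\opt_3$ on the line with $\|\opt_3\|\ge\rho_0$, so the infimum over the line is actually the infimum over the \emph{compact} segment $[\,-\rho_0,\rho_0\,]\times\{0\}^{n-1}$.

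It remains to note that $\opt_3\mapsto\lambda_3(\opt_1,\opt_2,\opt_3)$ is continuous on $\R^n$ — this is contained in Proposition~\ref{prop monolambda}, which asserts continuity of $\lambda_H$ in all its arguments including the optima. A continuous function on a nonempty compact set attains its minimum, so the infimum over the segment is attained at some point, which is therefore a global minimiser of $\opt_3\mapsto\lambda_3(\opt_1,\opt_2,\opt_3)$ over $\R^n$; by construction it lies on the line $(\opt_1\opt_2)$. The main obstacle, such as it is, is purely bookkeeping: making sure the two auxiliary propositions are invoked with compatible normalisations of $\opt_1,\opt_2$ (the reduction~\eqref{O1-O2} is without loss of generality by rotation/translation invariance of the spectral problem, which I would state explicitly once at the start), and checking that the strict inequality $\tilde\lambda_2>\lambda_2$ together with the limit in Proposition~\ref{prop:byebyeO3} genuinely pushes the ``tail'' values of $\lambda_3$ above the value at $\opt_1$ — i.e. that the convergence rate estimate in~\eqref{loin} is not needed, only the limit itself. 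No deep new idea is required; everything is assembled from the preceding results plus elementary compactness.
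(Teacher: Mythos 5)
Your proposal is correct and follows essentially the paper's own route: after the normalisation~\eqref{O1-O2}, the case $\opt_1\neq\opt_2$ is handled exactly as in the paper by combining the continuity of $\opt_3\mapsto\lambda_3(\opt_1,\opt_2,\opt_3)$ (Proposition~\ref{prop monolambda}) with the far-field behaviour of Proposition~\ref{prop:byebyeO3} (whose conclusion $\lambda_3>\lambda_2$ for $\|\opt_3\|$ large, together with $\lambda_3(\opt_1,\opt_2,\opt_1)<\lambda_2$ from Proposition~\ref{prop:O3=O1}, confines the infimum to a compact segment) and the projection inequality of Proposition~\ref{prop:projection}, plus elementary compactness. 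The only point to tighten is the degenerate case $\opt_1=\opt_2$: the claim that moving $\opt_3$ away ``increases the habitat spread and hence $\lambda_3$'' is not a proved monotonicity for $H=3$, and a Rayleigh-quotient test with Gaussians only yields an upper bound; instead argue, as the paper does, via Proposition~\ref{prop monolambda} (if all optima coincide then $\lambda_3=\lambda_1$, while otherwise $\delta\mapsto\lambda_3(\delta)$ is increasing from $\lambda_3(0)=\lambda_1$, so $\lambda_3>\lambda_1$ for $\delta>0$), or via the first inequality in~\eqref{ineqs}, which immediately gives the minimum at $\opt_3=\opt_1=\opt_2$.
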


\begin{proof}
Without loss of generality, one can assume that $\opt_1$ and $\opt_2$ are given as in~\eqref{O1-O2}. On the one hand, if $\beta=0$, then $\opt_1=\opt_2=\mathcal{O}$ and $\lambda_3(\opt_1,\opt_2,\mathcal{O})=\lambda_2(\opt_1,\opt_2)=\lambda_1$, by Proposition~\ref{prop monolambda}, while $\lambda_3(\opt_1,\opt_2,\opt_3)>\lambda_1$ for all $\opt_3\neq\mathcal{O}$, by Proposition~\ref{prop monolambda} again. The conclusion of Corollary~\ref{coropt} is then immediate in this case. On the other hand, if $\beta>0$, the conclusion easily follows from the continuity property in Proposition~\ref{prop monolambda}, together with Propositions~\ref{prop:byebyeO3},~\ref{prop:O3=O1} and~\ref{prop:projection}.
\end{proof}

\begin{remark}{\rm From Proposition~\ref{prop:hmid}, it follows that the middle optimum $(\opt_1+\opt_2)/2$ is not a minimum of the map $\opt_3\mapsto\lambda_3(\opt_1,\opt_2,\opt_3)$ when $\|\opt_1-\opt_2\|$ is large enough. Hence, for all $\|\opt_1-\opt_2\|$ large enough, by symmetry, the map $\lambda_3(\opt_1,\opt_2,\cdot)$ has at least two different minima, of the type $(\pm a_\beta,0,\ldots,0)$, for a certain $a_\beta>0$, under the notation~\eqref{O1-O2}. For all~$\|\opt_1-\opt_2\|$ large enough, these optima maximise the difference $\lambda_2(\opt_1,\opt_2)-\lambda_3(\opt_1,\opt_2,\cdot)$ (which is positive, by Proposition~\ref{prop:O3=O1}).}
\end{remark}


\subsection{Numerical simulations}\label{sec43}

We present in Figure~\ref{fig:diff scenar} a map of the positions of the third optimum $\opt_3$ in the phenotypic plane $\R^2$ with a colorbar picturing the corresponding difference $\lambda_2(\opt_1,\opt_2)-\lambda_3(\opt_1,\opt_2,\opt_3)$, with $\opt_1$ and $\opt_2$ as in~\eqref{O1-O2}. We recall that the difference $\lambda_2(\opt_1,\opt_2)-\lambda_3(\opt_1,\opt_2,\opt_3)$ measures the gain (or loss, if negative) in the chances of persistence, when the third host is added to the system.  All Matlab source code used to generate the analyses performed here is available at \url{https://doi.org/10.17605/OSF.IO/QAV2M}.

\setmysize{height=4.8cm, width=4.8cm}
\begin{figure}
\centering	
 \setlength{\tabcolsep}{0pt}
\begin{tabular}{C{1.cm} C{0.31\textwidth} C{0.31\textwidth} C{0.31\textwidth} }
& $\delta = 0.01$  & $\delta=1$ & $\delta=10$ \\ \\
\rotatebox{90}{$\beta= 0.5$}         &  		 \includegraphics[mysize]{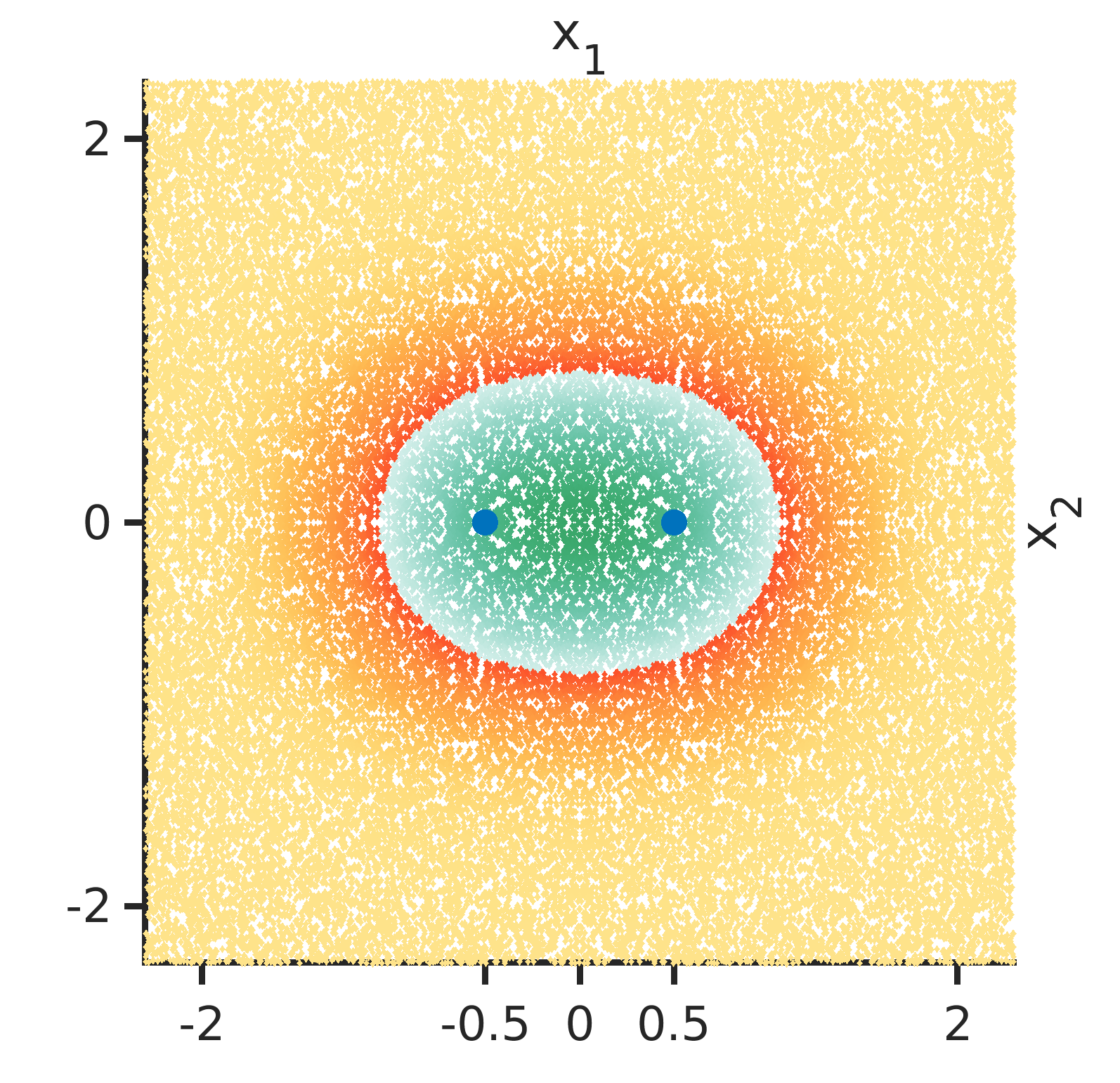}
    &\includegraphics[mysize]{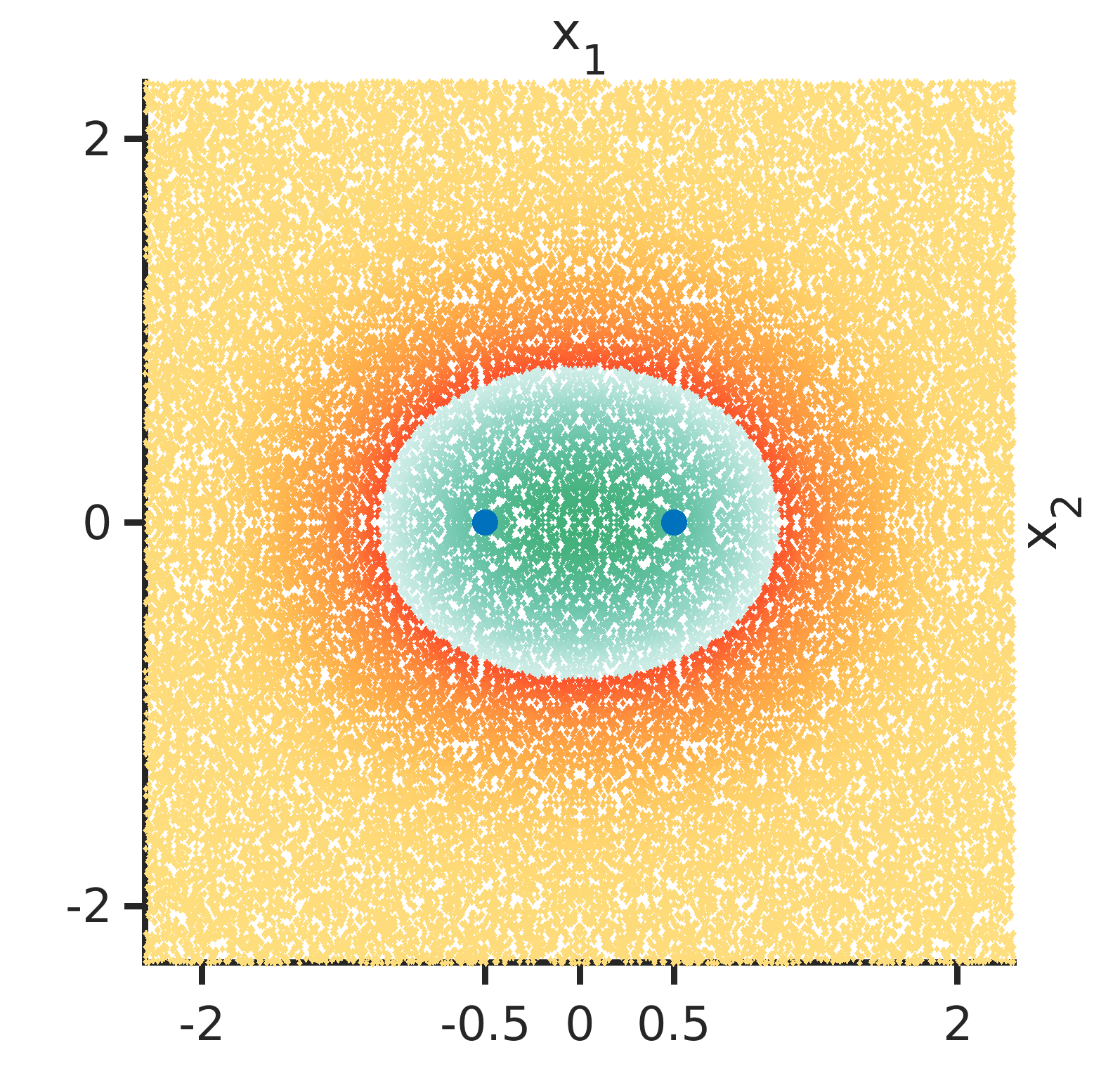} & \includegraphics[mysize]{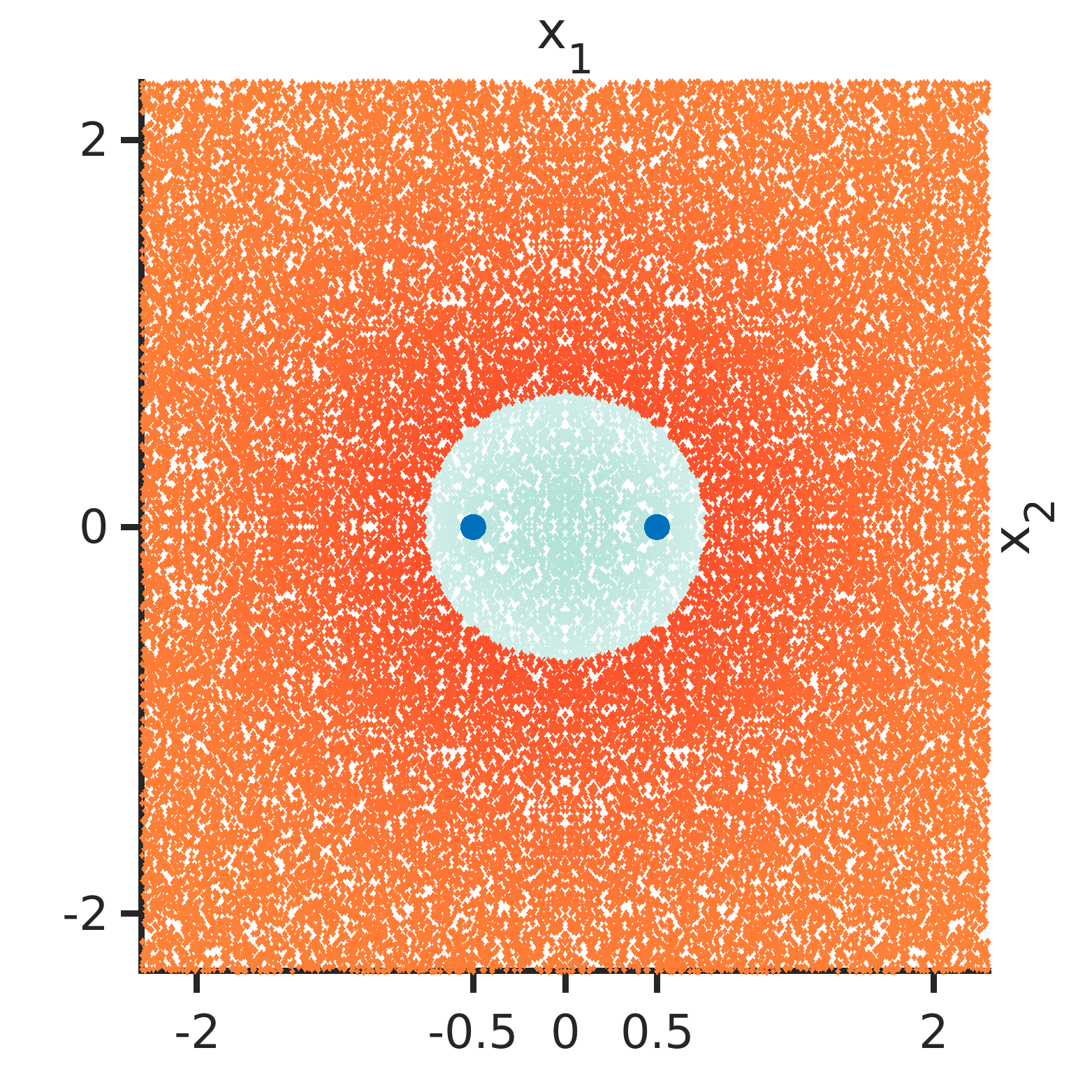}  \\
                            \rotatebox{90}{$\beta= 0.7$}                   &   \includegraphics[mysize]{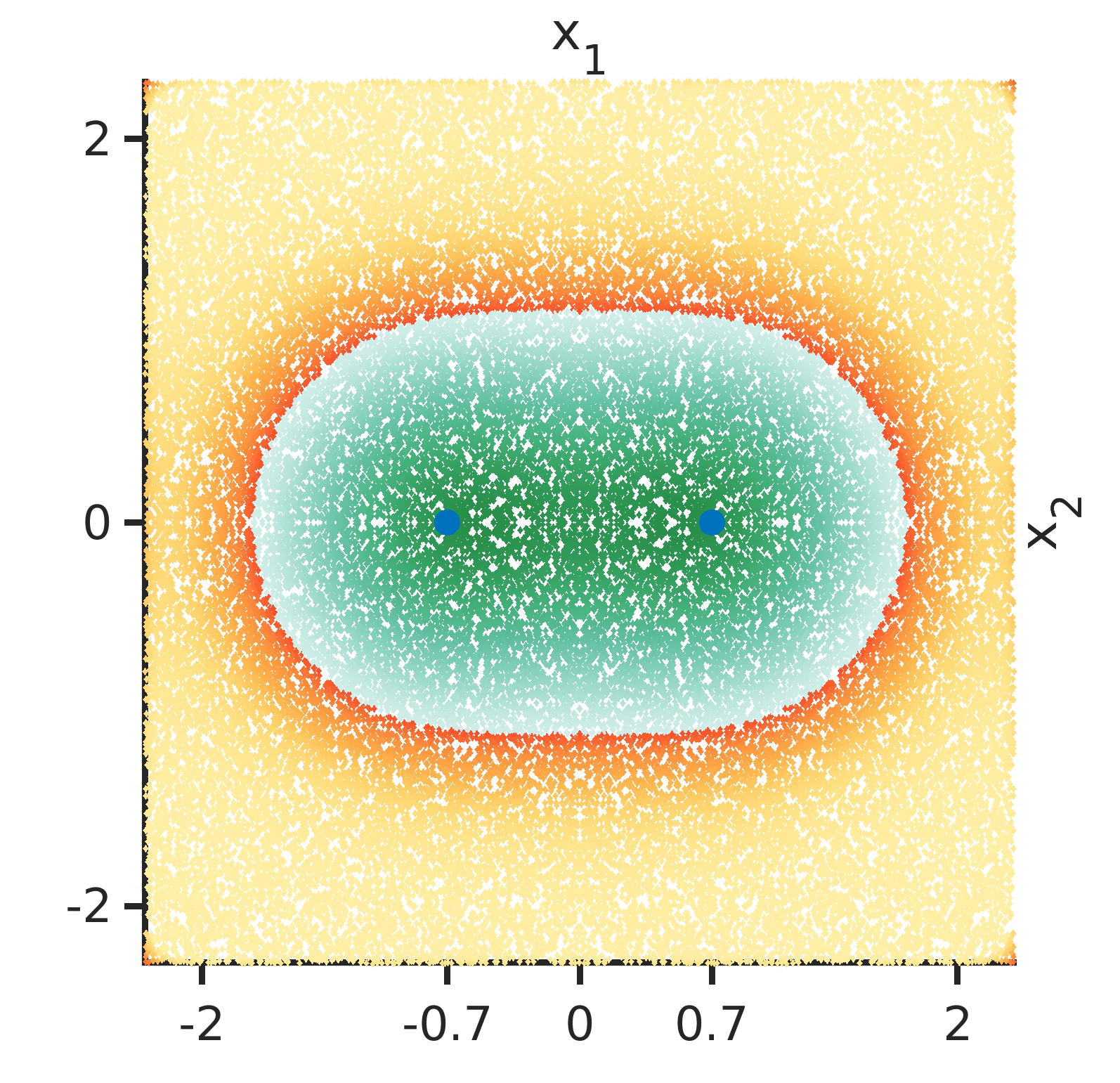}   &  \includegraphics[mysize]{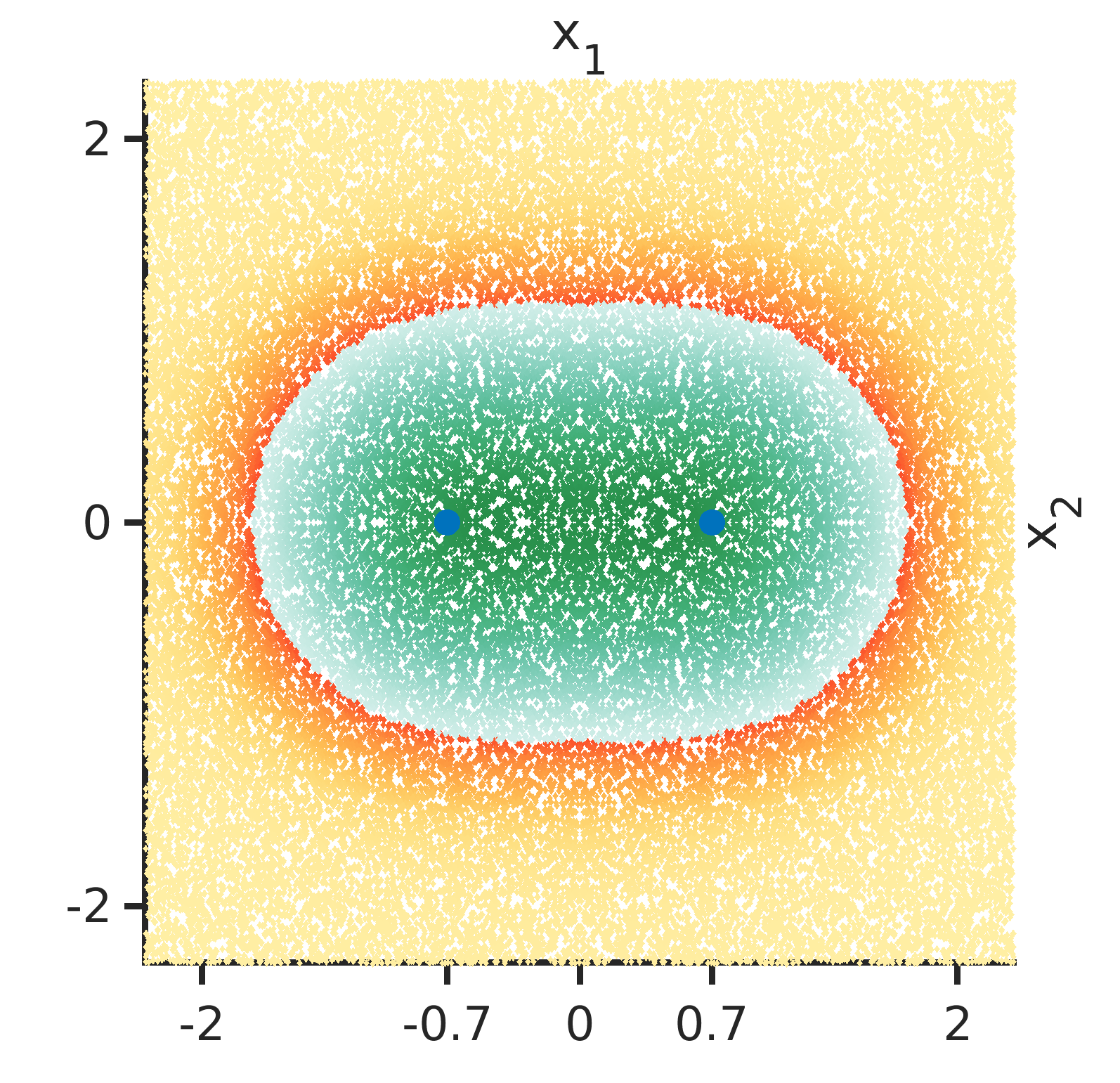} & \includegraphics[mysize]{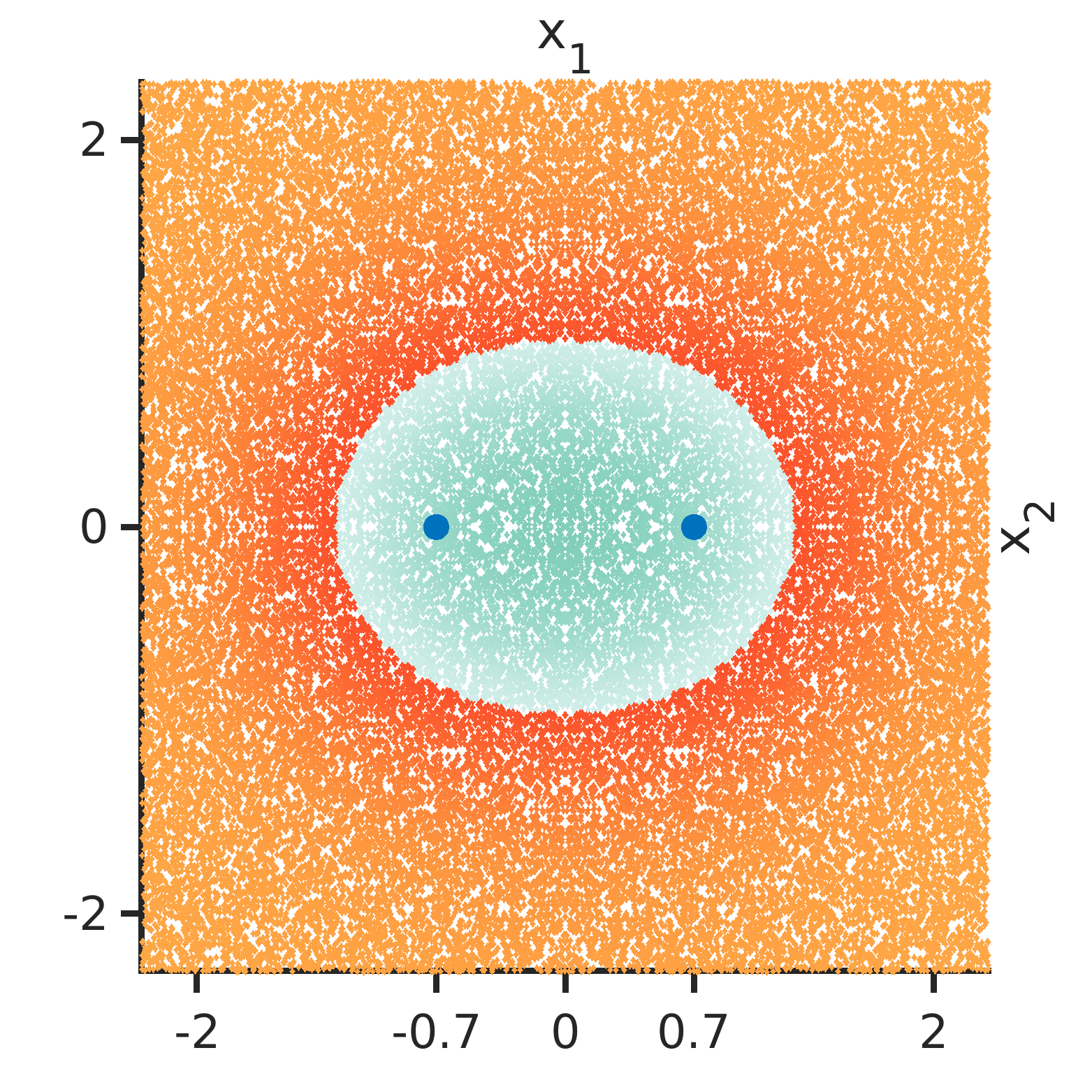}\\
                            \rotatebox{90}{$\beta= 1$}                     & \includegraphics[mysize]{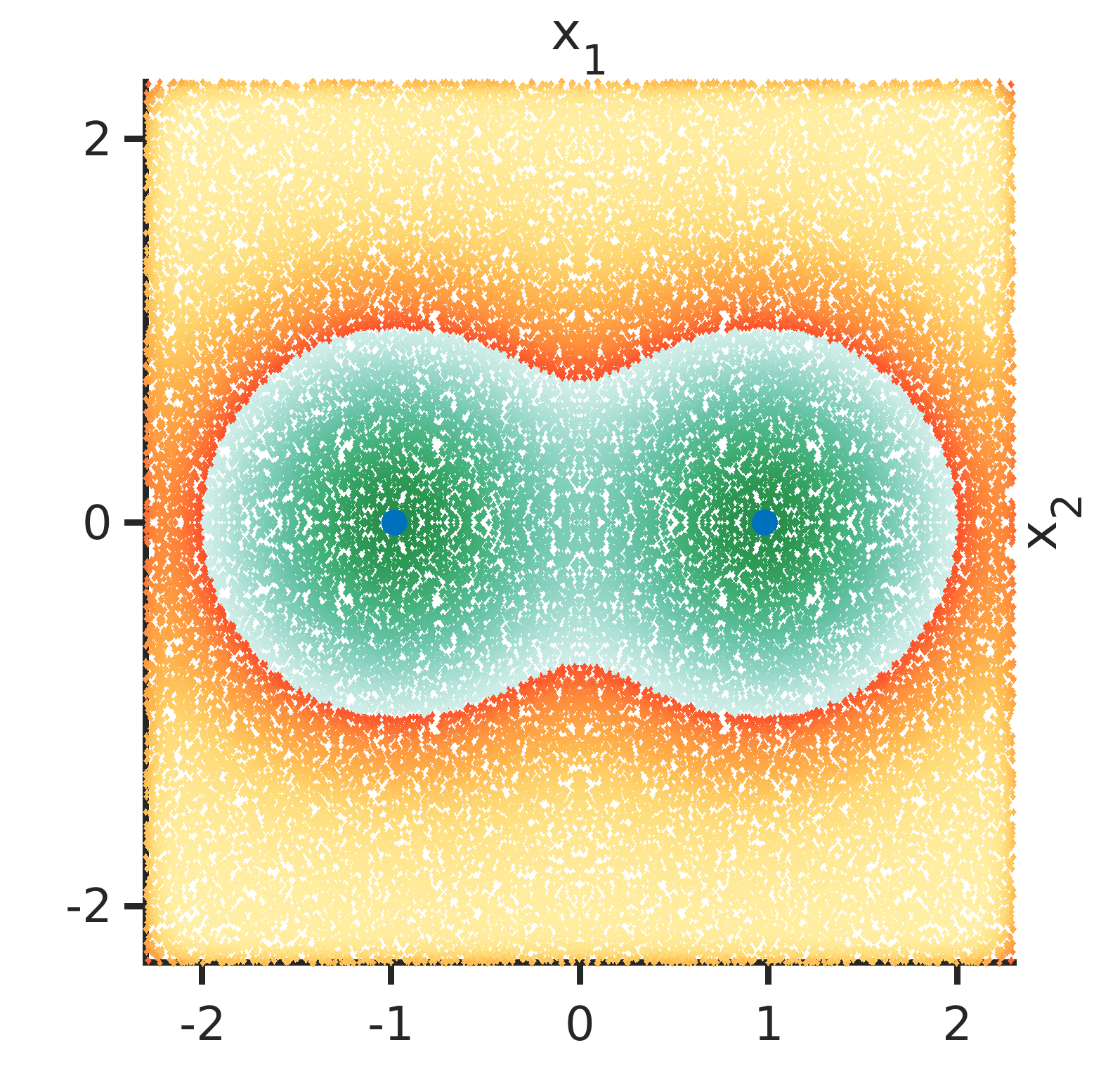}   &    \includegraphics[mysize]{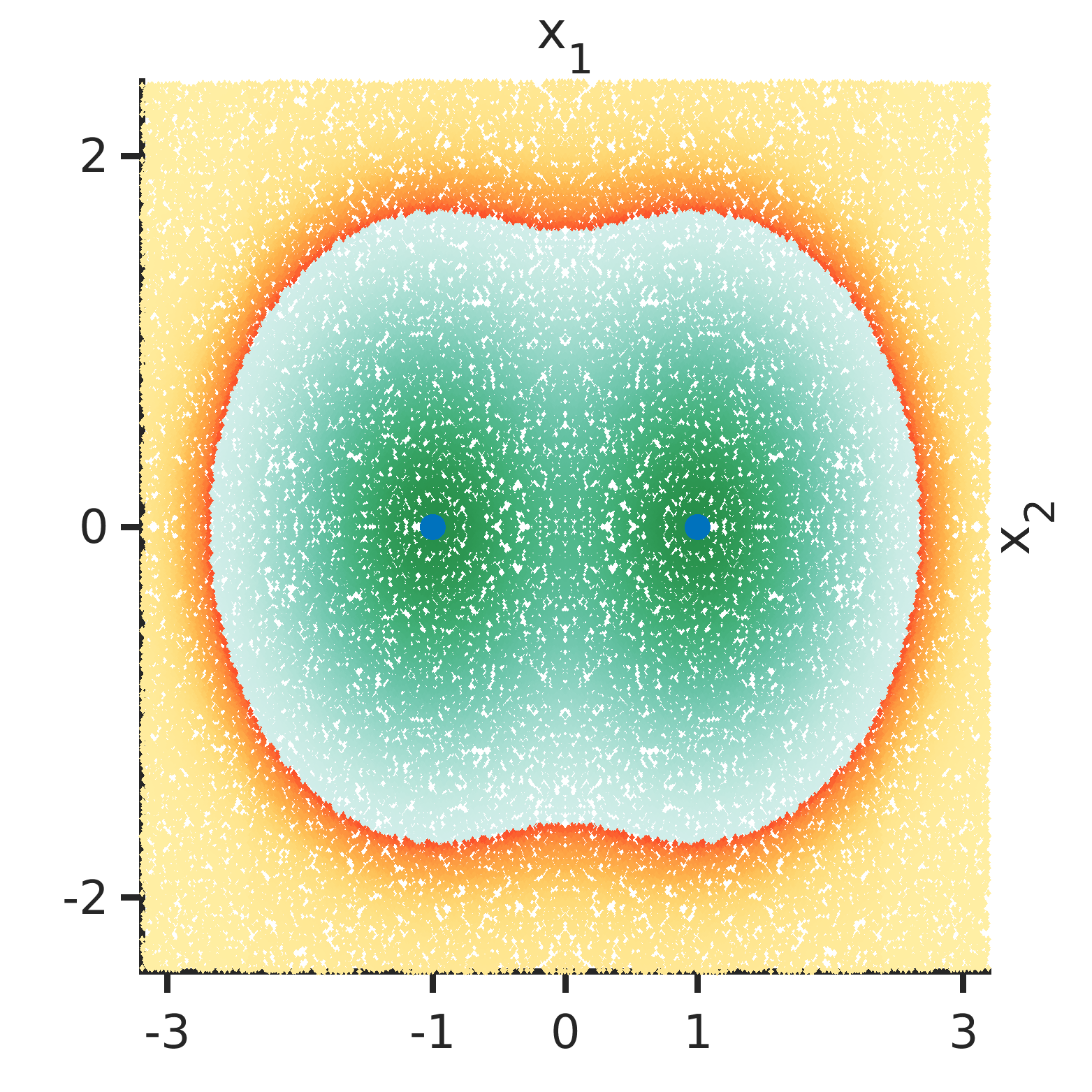}  & \includegraphics[mysize]{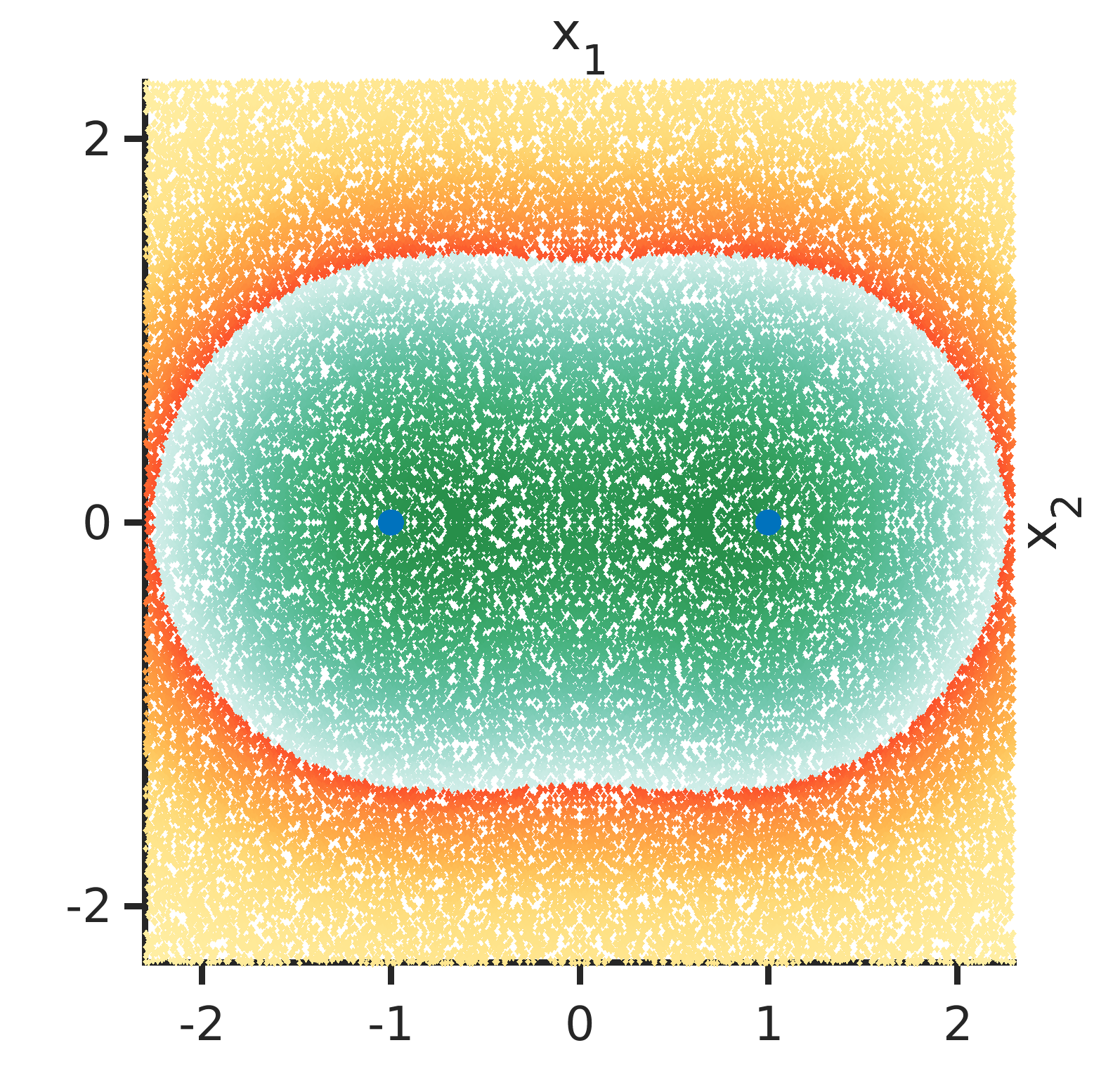} \\
                         \rotatebox{90}{$\beta= 2$}            &     \includegraphics[mysize]{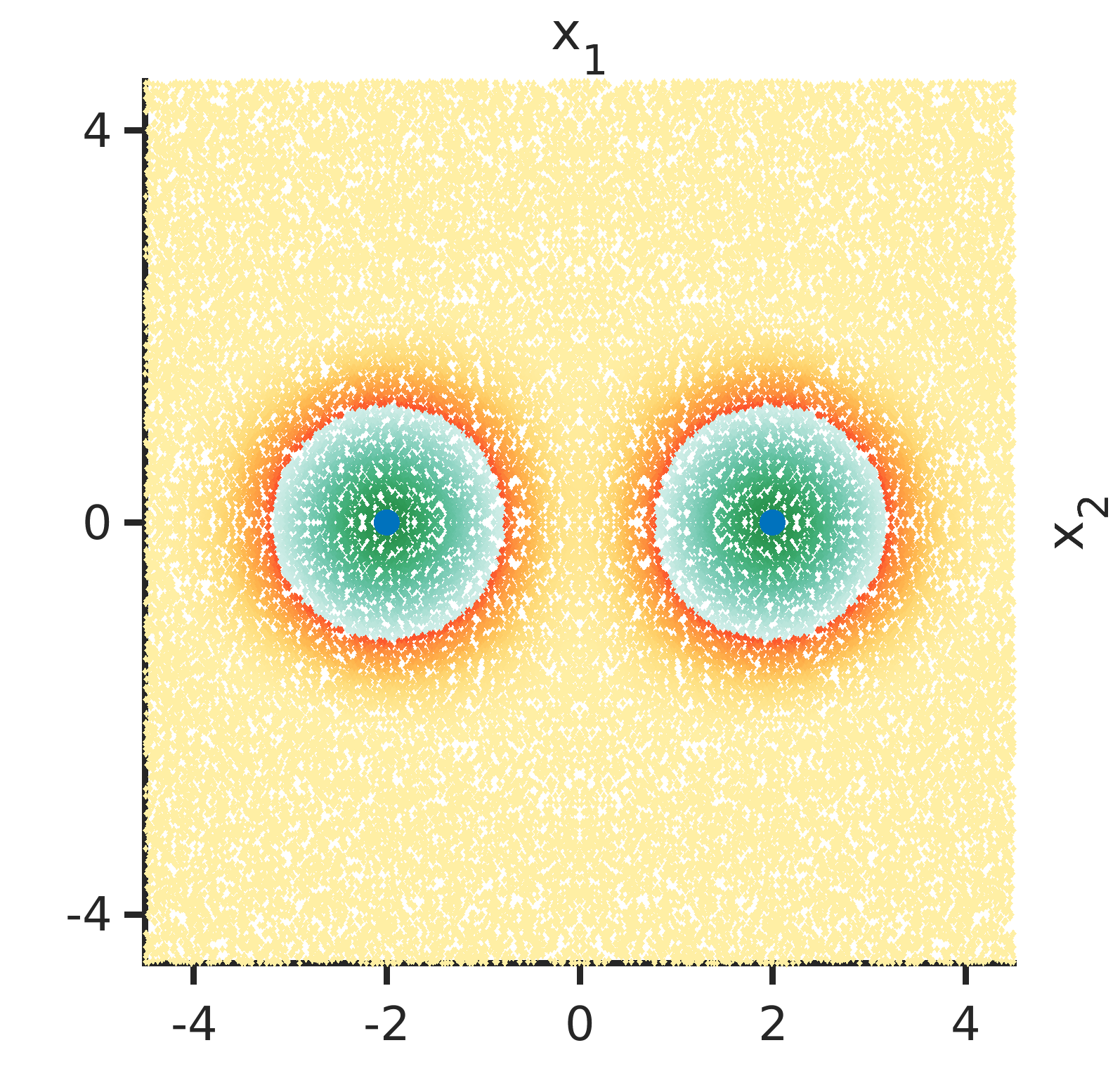} & \includegraphics[mysize]{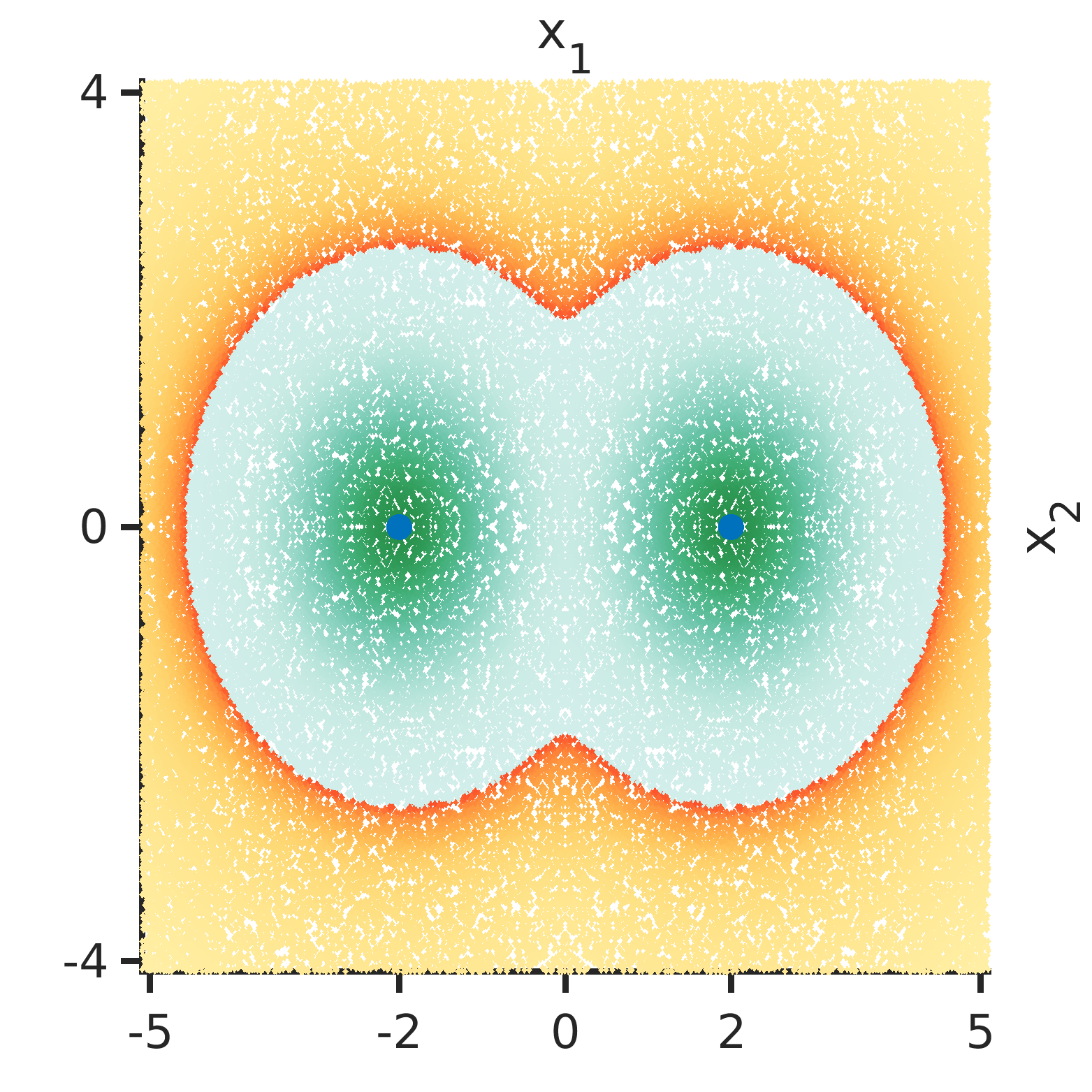} & \includegraphics[mysize]{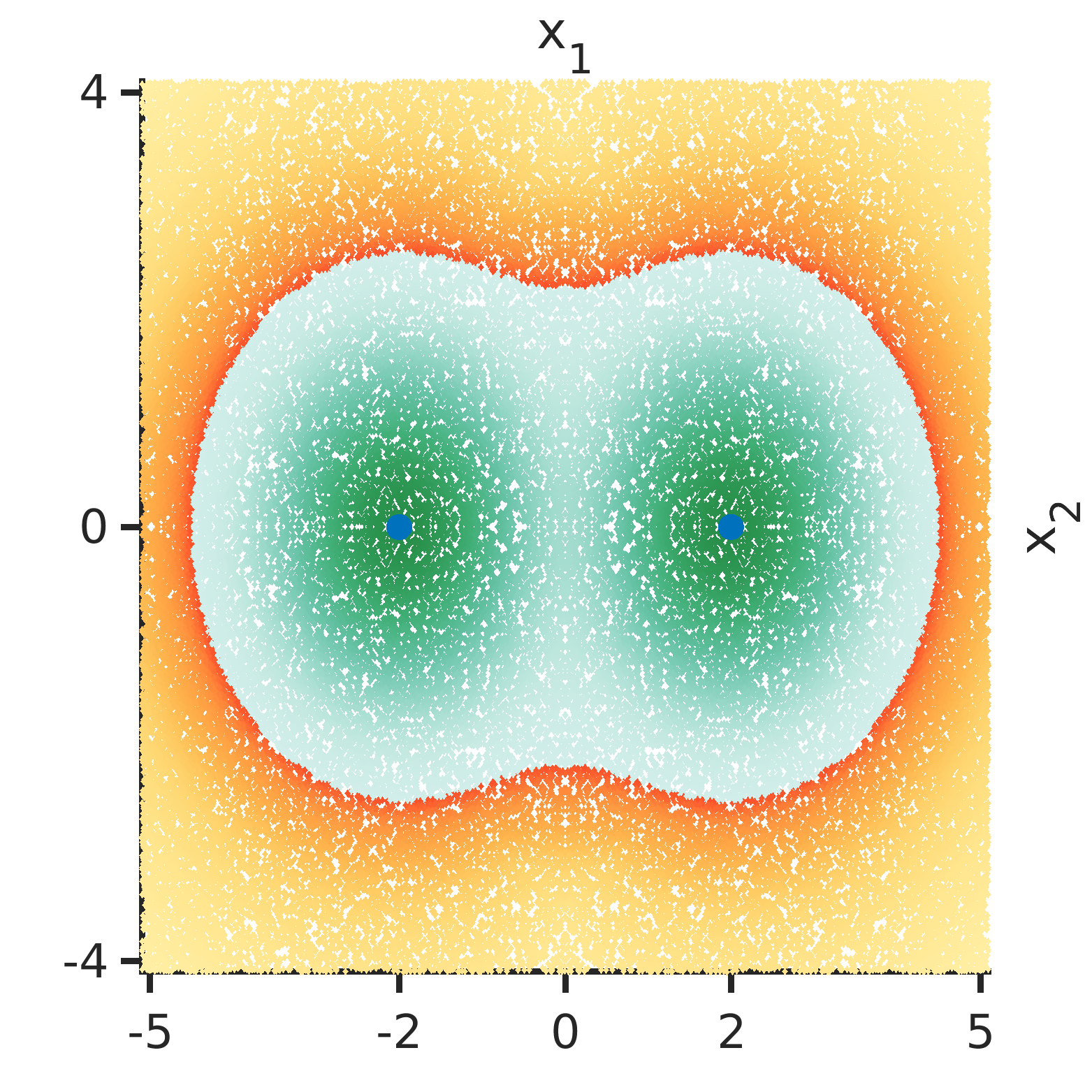}\\
& \multicolumn{3}{c}{\includegraphics[width=0.84\textwidth,height=0.8cm]{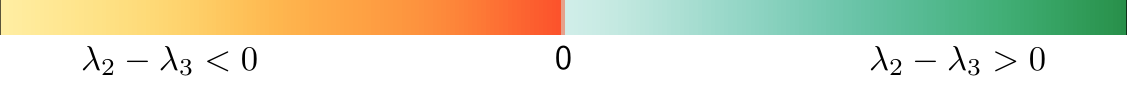}
}
\end{tabular}
\caption{\footnotesize{ \textbf{Effect of adding a third host on persistence.} Each point represent the position of a third optimum, while the two other optima are fixed, at the respective positions $(-\beta,0)$ and $(\beta,0)$, pictured in blue. The colormap corresponds to the value taken by the difference $\lambda_2-\lambda_3$, depending on the position of the third optimum. Each row corresponds to a different value of $\delta$ and each line to a different value of $\beta$. We assumed here that $\mu^2/2 =\alpha=1$. }}
\label{fig:diff scenar}
\end{figure}

This figure  serves as an illustration of the different results of this section. In each figure, the set $\Lambda_\delta$, defined in \eqref{def:Lambda}, corresponds to the green regions. The last column illustrates the behaviour $\delta\to+\infty$ of Proposition~\ref{Alfaro balls}.   In particular, on the first line, for $\beta=1/2$, there is a perfect match between the ball given in the asymptotic regime $\delta\to+\infty$ and the numerical simulations. For the other values of parameter $\beta$ pictured here,  $\delta=10$ seems not large enough to be considered infinite, but one can still observe on the last two lines, going from the left to the right a ``convexification'' of the set~$\Lambda_\delta$, for the lack of the better word, that points in the direction of  the  convergence towards the eventual ball, denoted $\Lambda_\infty$ in Proposition~\ref{Alfaro balls}.

On the other hand, the first column represents the regime $\delta \to 0^+$, detailed in Proposition~\ref{prop:fitness-gain-small-delta}.  In particular, it can be observed that, for small $\beta$, the set $\Lambda_\delta$ has the shape of an ``athletic stadium''. Then, for larger $\beta$, the set $\Lambda_\delta$ becomes ``oxygen molecule shaped'', and thus no longer convex. Interestingly, increasing $\beta$ leads to a loss of connectedness of the set $\Lambda_\delta$, see the picture for $\delta=0.01$ and $\beta=2$. In particular, in agreement with Proposition~\ref{prop:hmid}, this shows that the central point~$(0,0)$ is not optimal in terms of persistence.

Actually, this (numerically observed) loss of connectedness suggests that the introduction of the third host \lq\lq in the middle'' may lead to a decrease in pathogen fitness, compared to the case of only two hosts. One should be careful that this rather surprising result  is not implied by Proposition~\ref{prop:hmid}, which compares the introduction of a third host having an optimum in the middle of the first two hosts with that of a third host having the same optimum as one of the first two hosts. Notice that, as seen in the computations after Proposition~\ref{prop:fitness-gain-small-delta}, $\opt _3=(0,0)$ always belongs to $\mathcal P$ defined in \eqref{patatoide}. However, from the numerical simulations, it is natural to conjecture that $(0,0)\notin \Lambda_\delta$ should occur for some \lq\lq particular combinations of large $\beta$-small $\delta$''  that should depend on $\mu$ (mutations) and $\alpha$ (selection). This makes the proof of this conjecture very challenging.

Finally, we can verify that, as proved in Proposition~\ref{prop:O3=O1}, if the third optimum is very close (or indeed at the same position) than one of the other two optima, then the fitness of the system with three optima is better than the one with two. This corresponds  to the concept of ``evolutionary springboard'', as adaptation to one optimum is favoured by the presence of another one nearby. Incidentally, in accordance with Proposition~\ref{prop:projection}, the closer the third optimum to the axis $(\opt_1\opt_2)$, the larger the chances of persistence. This means that on the $x_2$-axis the third optimum is best positioned at the same phenotypic height than the two fixed optima. On the other axis (the $x_1$-axis), despite the symmetry of the two fixed optima, it is much harder to get qualitative information  as exemplified above by the case where the third host is in the middle.

\section{Discussion}\label{s:discussion}

We proposed a model to describe the adaptation dynamics of a phenotypically structured population in a $H$-patch environment, with each patch associated with a different phenotypic optimum, and performed a rigorous mathematical analysis of this model. This analysis sheds new light on the effect of increasing the number of hosts on the persistence of a (say, pathogenic) population. As is already known with two hosts, a pathogen population will always have more difficulty persisting on multiple hosts connected by migration than on a single host (Proposition~\ref{prop bound lambda}). Furthermore, increasing the migration rate $\delta>0$ further reduces the chances of population persistence (Proposition~\ref{prop monolambda}). However, in some configurations where pathogen persistence is not possible with two hosts, adding a third host achieves persistence. In such case, the presence of a third host causes a springboard effect, by modifying the geometric configuration of the phenotypic space. Thus, as we already know experimentally with the examples of zoonoses mentioned in the Introduction, the increase in the number of hosts does not necessarily mean a decrease in the chances of persistence as long as the initial number of hosts is at least equal to 2. However, the occurrence of this springboard effect, or on the contrary of a detrimental effect of increasing the number of hosts on the persistence of the pathogen, depends in a rather complex way on the respective positions of the optimal phenotypes associated with each host. Our results, which are based on a fixed point theorem,  comparison principles, integral estimates, variational arguments and rearrangement techniques,  provide a better understanding of these dependencies.

In the asymptotic regimes of small and large migration rates, we were able to characterise very precisely the situations where the addition of a third host increases or decreases the chances of persistence of the pathogen, compared to a situation with only two hosts. Proposition~\ref{prop:lambda-infini} shows that when the migration rate is large, the system with  $H$ host behaves as a single population, living in a single host and with ``effective'' fitness equal to the mean of the $H$ original fitness functions. In this case, where the population can be qualified as ``generalist'', we were able to show that the addition of a third host is favourable to persistence if and only if the optimum associated with this host is inside a certain ball containing the two other optima (Proposition~\ref{Alfaro balls}). When the migration rate is low, as shown in Proposition~\ref{prop:fitness-gain-small-delta} (and Figure~\ref{fig:diff scenar}), the shape of the region $\Lambda_\delta$ in the phenotypical space where the presence of a third host leads to higher chances of persistence of the pathogen is more complex and depends on the parameters. In particular, we see that this region is not necessarily convex especially if the two initial optima are far from each other. Moreover, while the addition of a third host always decreases the chances of persistence when the optima associated with the three hosts form an equilateral triangle in the phenotypic space (Proposition~\ref{prop equilateral}), we observe that when migration is low this equilateral configuration is very close to the $\Lambda_\delta$ region. Of course, some of these observations are only possible if the dimension of the phenotype space is at least 2.  This illustrates the importance of taking pleiotropy into account (in the sense that a single mutation can affect multiple phenotypes), as already noted in \cite{WaxPec98} in the case $H=1$ (but with other assumptions on the mutational operator).

In the general case (i.e., with an arbitrary migration rate $\delta$), and when the positions $\opt_1$ and $\opt_2$ of the two other hosts are fixed, we proved the existence of a position in the phenotypic space of the optimum associated with the third host maximizing the chances of pathogen persistence (Corollary~\ref{coropt}). This position is necessarily on the line $(\opt_1\, \opt_2)$ (Proposition~\ref{prop:projection}). However, contrary to a natural intuition, it is not necessarily located in the exact mean position between $\opt_1$ and $\opt_2$ (Proposition~\ref{prop:hmid}). In particular, if the two initial optima $\opt_1$ and $\opt_2$ are far apart, it is more beneficial for the persistence of the pathogen to introduce a third host associated with an optimum $\opt_3$ that is in a neighbourhood of $\opt_1$ or $\opt_2$. This result which is consistent with the simulations in Figure~\ref{fig:diff scenar} shows the richness of the outcomes.

From a mathematical point of view, an important part of the difficulty in dealing with $H\ge 3$ hosts, compared to $H=1$ or $H=2$ hosts, comes from the lack of symmetry. In the $H=2$ case, provided that the initial conditions are themselves symmetric about $(\opt_1+\opt_2)/2$, the solutions $u_1$ and $u_2$ remain symmetric about this point. This type of symmetry argument was used in \cite{HamLavRoq20} to reduce the study of the nonlinear system~\eqref{eq:sys H} to that of a linear scalar equation. Such arguments cannot be applied in the general case $H \ge 3$ where we had to treat the complete nonlinear and nonlocal system. This has led to very different proofs, starting with the well-posedness of the Cauchy problem (Theorem~\ref{thm:well-pos}). The stationary states of the system illustrate well the complexity induced by the absence of symmetry: when there is persistence and $H=1,\, 2$, the stationary states are proportional to the principal eigenfunctions, but this is no longer true in general in the case $H\ge 3$, because of a possible symmetry breaking (Proposition~\ref{prop:statio_states}).  Again because of the lack of  symmetry, the populations at equilibrium have no reason to be equal in each host. Even when $H=2$, the eigenfunctions are not radially symmetric (they are biased towards the other optimum, see formula 46 in \cite{HamLavRoq20}). When $H\ge 3$, the qualitative description of their profile is even more involved, leading to new difficulties in the comparison between different geometric configurations of the positions of the optima. Although we were able to show that the ``best position'' of $\opt_3$ when $\opt_1$ and $\opt_2$ are fixed should belong to the line $(\opt_1\, \opt_2)$, the rearrangement techniques
that we used for this result did not allow to determine the best position exactly. This is because the rearrangement result is only true in the directions where $\opt_1$ and $\opt_2$ are stable by rearrangement (and thus not in the direction $x_1$ with our conventions; otherwise the position $\opt_3=(\opt_1+\opt_2)/2$ would always be optimal, which would contradict the result of the Proposition~\ref{prop:hmid}). Note that the rearrangement inequalities used in the proof of Proposition~\ref{prop:projection} are not standard, because of the  unbounded domain and of the sign changes of $r_i$.

As mentioned in the Introduction, one of the fundamental principles in agroecology is to promote diversified agroecosystems rather than uniform cultures.  Our results show, with a rigorous mathematical analysis, that what is true when moving from a monoculture to a varietal mixture is not always true when moving from a varietal mixture with $H\ge 2$ species to a mixture with $H+1$ species. In such a situation, it is necessary to know the respective positions of the optima associated with the different hosts, from the point of view of the pathogen, to predict the positive or negative impact of diversification. From an experimental point of view, it would therefore be necessary to be able to estimate these positions, by comparing, for example, via cross-inoculation experiments, the effect of pre-adaptation of a pathogen on each host $i$ before its introduction on a host $j$, for each pair $i,j$. Of course, the vision here, where each host is described by a unique optimum (FGM model) and where the different parameters do not depend on the host $i$ is very schematic,  and the comparison with quantitative data would require more complex approaches. However, the model \eqref{eq:sys H} has the advantage of being mathematically tractable while capturing the antagonistic effects of host diversification.


\section{Proofs of the main results}\label{sec:proofs}

This section is devoted to the proofs of all main results of the paper which have not been shown along the way in the previous sections. The results of Section~\ref{sec:existence} are proved in Section~\ref{sec51}, while Section~\ref{sec52} contains the proofs of the results of Section~\ref{sec:effect}, and Section~\ref{sec53} those of Section~\ref{sec:H3vsH2}.


\subsection{Proofs of Theorem~\ref{thm:well-pos} and Propositions~\ref{theo pers log}-\ref{prop:statio_states}}\label{sec51}

\begin{proof}[Proof of Theorem~$\ref{thm:well-pos}$]
Our strategy involves firstly a fixed point argument to construct a unique local-in-time solution, and secondly the derivation of some estimates to conclude that the solution is defined for all time $t\ge0$. One recalls that $\ug^0=(u^0_1,\ldots,u^0_H)$ satis\-fies~\eqref{data}-\eqref{init bound}. Having in mind the assumption~\eqref{init bound} and the desired inequality~\eqref{ineqexp}, we can assume without loss of generality that
$$0<\theta\le\frac{1}{\mu}$$
in~\eqref{init bound}. Set $\rmax^+:=\max(\rmax,0)$, let $\omega_n$ be the $(n-1)$-dimensional Lebesgue measure of the unit Euclidean sphere of $\R^n$, let $B:=\max_{1\le i\le H}\|\opt_i\|$ and
\begin{equation}\label{defC1}
C_1\!:=\!\frac{e}{3}\,\theta^{-n}\omega_n(n-1)!\times\big[|\rmax|+\alpha B^2+\alpha\theta^{-2}n(n+1)+Ke^{\rmax^+}\theta^{-n}\omega_n(n-1)!(e+3)+2\delta\big].
\end{equation}
Denote
\begin{equation}\label{cond T}
T_1:=\Big(1+\frac{\theta^2\mu^2}{2}+\rmax^++3\,\theta^{-n}\,\omega_n\,(n-1)!\,K\,e^{\rmax^++1}+9\,C_1^2\Big)^{-1}\ \in(0,1),
\end{equation}
and
\begin{equation}\label{defE}\begin{array}{rcl}
E & \!\!:=\!\! & \Big\{\ug=(u_1,\ldots,u_H)\in C([0,T_1]\times\R^n,\R^H)\ \hbox{ such that }u_i(0,\cdot)=u_i^0\hbox{ in $\R^n$,}\vspace{3pt}\\
& & \ \ \ \ 0\le u_i(t,x)\le K\,e^{(\rmax+1)t-\theta\norm{\x}}\hbox{ for all }(t,\x)\in[0,T_1]\times\R^n,\vspace{3pt}\\
& & \ds\ \ \ \ \hbox{and }t\mapsto\int_{\R^n}u_i(t,\x)\,\md\x\ \in C^{0,1/2}([0,T_1]),\ \hbox{ for all $1\le i\le H$}\Big\}.\end{array}
\end{equation}
We equip $E$ with the distance
\begin{equation}\label{defdE}\begin{array}{rl}
d_E(\ug,\vg):=\ds\max\Big( & \!\!\!\ds \max_{1\le i\le H}\big\|(t,\x)\mapsto(u_i(t,\x)-v_i(t,\x))\,e^{\theta\|\x\|}\big\|_{L^\infty([0,T_1]\times\R^n)},\vspace{3pt}\\
& \!\!\!\ds\max_{1\le i\le H}\Big\|t\mapsto\int_{\R^n}(u_i(t,\x)-v_i(t,\x))\md\x\Big\|_{C^{0,1/2}([0,T_1])}\Big),\end{array}
\end{equation}
where $\|g\|_{C^{0,1/2}([0,T_1])}=\max_{t\in[0,T_1]}|g(t)|+\sup_{t\neq t'\in[0,T_1]}|g(t)-g(t')|/|t-t'|^{1/2}$ for $g\in C^{0,1/2}([0,T_1])$. The space $(E,d_E)$ is a non-empty complete metric space.

Define now a mapping $\Fm$ as follows:
$$\begin{array}{rcl}
\mathcal{F}:E & \to & E \\
\ug & \mapsto & \Fm(\ug):=\Ug,\end{array}$$
where, for a given $\ug\in E$, $\Ug= (U_1,\ldots,U_H)\in C([0,T_1]\times\R^n,\R^H)\cap C^{1;2}_{t;\x}((0,T_1]\times\R^n,\R^H)$ is the classical bounded nonnegative solution of the linear system
$$\p_t U_i (t,\x)=\frac{\mu^2}{2} \Delta U_i(t,\x)+\!\left(\!r_i(\x)\!-\!\!\int_{\R^n}\!\!u_i(t,\y)\,\md\y\!\right) U_i(t,\x)+\frac{\delta}{H\!-\!1}\sum \limits_{\substack{k=1 \\k \neq i}}^H(U_k(t,\x)\!-\!U_i(t,\x))$$
in $(0,T_1]\times\R^n$, for $1\le i\le H$, with initial condition $U_i(0,\cdot)=u_i^0$ in $\R^n$. Using the H\"older continuity of the maps $t\mapsto\int_{\R^n}u_i(t,\x)\,\md\x$ in $[0,T_1]$, such a solution $\Ug$ exists and is unique, by~\cite{Bes79}.

To show that $\Fm(E)\subset E$, let us first check that, for each $\ug\in E$, the image $\Ug=\Fm(\ug)$ satisfies the pointwise bounds of $E$. Notice that
\begin{equation}\label{ineqU}
\p_t U_i(t,\x)  \leq  \frac{\mu^2}2 \Delta U_i(t,\x)+\rmax U_i(t,\x)  + \frac{\delta}{H-1} \sum \limits_{\substack{k=1 \\k \neq i}}^H (U_k(t,\x)-U_i(t,\x))
\end{equation}
for every $1\le i\le H$ and $(t,\x)\in(0,T_1]\times\R^n$. Take any vector $\xi\in\R^n$ such that $\|\xi\|=1$ and define
\begin{equation}\label{defolUi}
\overline{U}_i(t,\x):=K\,e^{(\rmax+1)t-\theta\xi\cdot\x},\ \ 1\le i\le H,\ \ t\in\R,\ \ \x\in\R^n.
\end{equation}
Notice that the positive functions $\overline{U}_i$ actually do not depend on $i$, and satisfy
\begin{equation}\label{eqolUi}
\p_t\overline{U}_i(t,\x)=\frac{\mu^2}{2}\Delta\overline{U}_i(t,\x)+(\rmax+1)\overline{U}_i(t,\x)-\frac{\theta^2\mu^2}{2}\,\overline{U}_i(t,\x)
\end{equation}
in $\R\times\R^n$. Since we had assumed that $0<\theta\le1/\mu$ without loss of generality, we get that
\begin{equation}\label{ineqU+}
\p_t \overline{U}_i(t,\x)\geq\frac{\mu^2}2 \Delta\overline{U}_i(t,\x)+\rmax\overline{U}_i(t,\x)+\frac{\delta}{H-1} \sum \limits_{\substack{k=1 \\k \neq i}}^H (\overline{U}_k(t,\x)-\overline{U}_i(t,\x))
\end{equation}
for every $1\le i\le H$ and $(t,\x)\in\R\times\R^n$ (recall that $\overline{U}_k-\overline{U}_i\equiv 0$). Moreover, we know from \eqref{init bound} that $U_i(0,\cdot) = u^0_i\leq\overline{U}_i(0,\cdot)$ in $\R^n$. Therefore, by~\eqref{ineqU} and~\eqref{ineqU+}, $\Ug$ and $\overline{\Ug}:=(\overline{U}_1,\ldots,\overline{U}_H)$ are sub- and super-solutions of a linear cooperative system, and the parabolic maximum principle implies that $U_i(t,\x)\leq\overline{U}_i(t,\x)$ for all $1\le i\le H$ and $(t,\x)\in[0,T_1]\times\R^n$, that is, $U_i(t,\x) \leq K\,e^{(\rmax+1) t-\theta\xi\cdot\x}$. Since $\xi$ was arbitrary in the Euclidean unit sphere of~$\R^n$, and remembering the nonnegativity of each $U_i$, we get that
\begin{equation}\label{ineqU2}
0\le U_i(t,\x)\le Ke^{(\rmax+1)t-\theta\|\x\|}
\end{equation}
for all $1\le i\le H$ and $(t,\x)\in[0,T_1]\times\R^n$. The Lebesgue dominated convergence theorem then implies that each map $t\mapsto\int_{\R^n}U_i(t,\x)\md\x$ is well defined and continuous in $[0,T_1]$. It remains to show that each such map is in $C^{0,1/2}([0,T_1])$. Since the functions~$U_i$ and~$r_iU_i$ decay exponentially to $0$ as $\|\x\|\to+\infty$ uniformly in $t\in[0,T_1]$, it follows from standard parabolic estimates that, for each $0<a\le b\le T_1$, the functions $\p_{x_j}U_i$ decay exponentially to $0$ as $\|\x\|\to+\infty$ uniformly in $t\in[a,b]$, for every $1\le i\le H$ and $1\le j\le n$. Therefore, for every $0<t\le t'\le T_1$, by integrating the equation satisfied by~$U_i$ over $[t,t']\times B(\mathcal{O},R)$ and passing to the limit as $R\to+\infty$, one infers that
\begin{equation}\label{integrals}\begin{array}{rcl}
\ds\int_{\R^n}\!\!U_i(t',\x)\md\x-\!\int_{\R^n}\!\!U_i(t,\x)\md\x & \!\!\!=\!\!\! & \ds\int_t^{t'}\Big(\int_{\R^n}r_i(\x)U_i(s,\x)\md\x\Big)\md s\vspace{3pt}\\
& & \ds-\int_t^{t'}\Big(\int_{\R^n}u_i(s,\x)\md\x\Big)\Big(\int_{\R^n}U_i(s,\x)\md\x\Big)\md s\vspace{3pt}\\
& & \ds+\frac{\delta}{H-1}\sum \limits_{\substack{k=1 \\k \neq i}}^H\int_t^{t'}\!\!\!\!\int_{\R^n}\!(U_k(s,\x)-U_i(s,\x))\md\x\,\md s.\end{array}
\end{equation}
By~\eqref{ineqU2} and the definitions of $r_i$ and $E$, the integrals $\int_{\R^n}r_i(\x)U_i(s,\x)\md\x$, $\int_{\R^n}u_i(s,\x)\md\x$ and $\int_{\R^n}U_i(s,\x)\md\x$ are all bounded in absolute value, say by a constant $C$, uniformly in $s\in[0,T_1]$ and $1\le i\le H$. Thus,
\begin{equation}\label{uLip}
\Big|\int_{\R^n}U_i(t',\x)\md\x-\int_{\R^n}U_i(t,\x)\md\x\Big|\le (C+C^2+2\delta C)\,|t-t'|
\end{equation}
for every $1\le i\le H$ and $0<t<t'\le T_1$ (and also with $t=0$ by continuity). As a consequence, the maps $t\mapsto\int_{\R^n}U_i(t,\x)\md\x$ are of class $C^{0,1}([0,T_1])\subset C^{0,1/2}([0,T_1])$. Finally, $\Ug\in\E$.

Next, we show that $\Fm$ is a contraction mapping, that is, there exists $0<\kappa<1$ such that
\begin{align}\label{eq:contractionmapping}
\forall\,(\ug,\vg)\in E\times E,\ \ d_E(\Fm(\ug),\Fm(\vg)) \leq \kappa\,d_E(\ug,\vg).
\end{align}
Take any $\ug,\vg \in E$, and denote $\Ug:= \Fm(\ug)$, $\Vg:=\Fm(\vg)$, $\wg:=\ug-\vg$ and $\Wg:=\Ug-\Vg$. With these notations, for each $1\le i\le H$, the function $W_i$ belongs to $C([0,T_1]\times\R^n,\R^H)\cap C^{1;2}_{t;\x}((0,T_1]\times\R^n,\R^H)$ and solves
\begin{multline}\label{sys W+}
\p_t W_i (t,\x)   =  \ds\frac{\mu^2}2 \Delta W_i (t,\x) +  \left( r_i(\x)  -   \int_{\R^n} u_i(t,\y)\md\y \right) W_i (t,\x)  \\
 \ds-V_i (t,\x) \int_{\R^n} w_i(t,\y)\md\y + \frac{\delta}{H-1} \sum \limits_{\substack{k=1 \\k \neq i}}^H ( W_k (t,\x) -W_i (t,\x))
 \end{multline}
in $(0,T_1]\times\R^n$, together with $W_i(0,\cdot)=0$ in $\R^n$. Consider any $\xi\in\R^n$ with $\|\xi\|=1$. Our supersolution candidate for this system will be $\overline{\Wg}=(\overline{W}_1,\ldots,\overline{W}_H)$ defined by
$$\overline{W}_i(t,\x):= \frac 13\,d_E(\ug,\vg)\,e^{t/T_1-\theta\xi\cdot\x}, \quad 1\le i\le H,\ 0 <t\leq T_1,\  \x\in\R^n.$$
Notice that the functions $\overline{W}_i$ actually do not depend on $i$. First of all, we recall that~$\omega_n$ is the $(n-1)$-dimensional Lebesgue measure of the unit Euclidean sphere of $\R^n$. Now, since $\Vg=\Fm(\vg)\in E$, one has, for every $1\leq i\leq H$ and $(t,\x)\in[0,T_1]\times\R^n$,
\begin{equation}\label{ineqViwi}\begin{array}{rcl}
\ds\Big|V_i(t,\x)\int_{\R^n}w_i(t,\y)\md\y\Big| & \le & \ds V_i(t,\x)\int_{\R^n}|u_i(t,\y)-v_i(t,\y)|\,\md\y\vspace{3pt}\\
& \le & \ds d_E(\ug,\vg)\,V_i(t,\x)\int_{\R^n}e^{-\theta\|\y\|}\md\y\vspace{3pt}\\
& = & \ds \theta^{-n}\,\omega_n\,(n-1)!\,d_E(\ug,\vg)\,V_i(t,\x)\vspace{3pt}\\
& \le & \ds \theta^{-n}\,\omega_n\,(n-1)!\,d_E(\ug,\vg)\,K\,e^{(\rmax+1)t-\theta\norm{\x}}\vspace{3pt}\\
& \le & \ds \theta^{-n}\,\omega_n\,(n-1)!\,d_E(\ug,\vg)\,K\,e^{(\rmax^++1)T_1-\theta\xi\cdot\x}.\end{array}
\end{equation}
On the other hand, the definition~\eqref{cond T} of $T_1$ yields
\begin{equation}\label{ineqT1}
\frac{1}{T_1}\ge\frac{\theta^2\mu^2}{2}+\rmax^++3\,\theta^{-n}\,\omega_n\,(n-1)!\,K\,e^{(\rmax^++1)T_1-t/T_1}
\end{equation}
for all $t\in[0,T_1]$. Since $r_i\le\rmax\le\rmax^+$ in $\R^n$ and $u_i\ge0$ in $[0,T_1]\times\R^n$, it then follows from~\eqref{ineqViwi}-\eqref{ineqT1} that each nonnegative function $\overline{W}_i$ satisfies
$$\begin{array}{rcl}
\p_t\overline{W}_i(t,\x) & \ge & \ds\frac{\mu^2}{2}\Delta\overline{W}_i(t,\x)+\rmax^+\overline{W}_i(t,\x)-V_i(t,\x)\int_{\R^n}w_i(t,\y)\,\md\y\vspace{3pt}\\
& \ge & \ds\frac{\mu^2}{2}\Delta\overline{W}_i(t,\x)+\Big(r_i(\x)-\int_{\R^n}u_i(t,\y)\,\md\y\Big)\,\overline{W}_i(t,\x)\vspace{3pt}\\
& & \ds-V_i(t,\x)\int_{\R^n}w_i(t,\y)\,\md\y+\frac{\delta}{H-1} \sum \limits_{\substack{k=1 \\k \neq i}}^H (\overline{W}_k (t,\x)-\overline{W}_i (t,\x))\end{array}$$
for all $(t,\x)\in(0,T_1]\times\R^n$ (recall that $\overline{W}_k-\overline{W}_i\equiv 0$). Therefore, the functions $Z_i:=\overline{W}_i-W_i$ are continuous in $[0,T_1]\times\R^n$, nonnegative at $t=0$, and satisfy
$$\p_t Z_i (t,\x)\ge\frac{\mu^2}2 \Delta Z_i (t,\x)\!+\!\Big(r_i(\x)\!-\!\!\int_{\R^n}\!u_i(t,\y)\md\y\!\Big)Z_i (t,\x)+\frac{\delta}{H\!-\!1}\!\sum \limits_{\substack{k=1 \\k \neq i}}^H(Z_k (t,\x)\!-\!Z_i (t,\x))$$
in $(0,T_1]\times\R^n$. The comparison principle for this linear cooperative system implies that $Z_i(t,\x)\ge0$, that is, $W_i(t,\x)\le\overline{W}_i(t,\x)$, for every $1\le i\le H$ and $(t,\x)\in[0,T_1]\times\R^n$. By changing $W_i$ into $-W_i$ in~\eqref{sys W+} (and the sign of the first integral in the second line) and arguing similarly, one gets that $|W_i(t,\x)|\le\overline{W}_i(t,\x)=(d_E(\ug,\vg)/3)\times e^{t/T_1-\theta\xi\cdot\x}$ for every $1\le i\le H$ and $(t,\x)\in[0,T_1]\times\R^n$. Since $\xi$ was arbitrary in the unit Euclidean sphere of~$\R^n$, one concludes that
\begin{equation}\label{ineqWi}
|U_i(t,\x)-V_i(t,\x)|=|W_i(t,\x)|\le\frac13\,d_E(\ug,\vg)\,e^{t/T_1-\theta\|\x\|}\le\frac e3\,d_E(\ug,\vg)\,e^{-\theta\|\x\|}
\end{equation}
for every $1\le i\le H$ and $(t,\x)\in[0,T_1]\times\R^n$.

In order to establish~\eqref{eq:contractionmapping}, it remains to estimate the $C^{0,1/2}([0,T_1])$ norm of the functions $t\mapsto\int_{\R^n}(U_i(t,\x)-V_i(t,\x))\,\md\x$. For every $1\le i\le H$ and $0<t<t'\le T_1$, by using~\eqref{integrals} for both~$U_i$ and~$V_i$, one infers that
\begin{equation}\label{ineqWitt'}\begin{array}{rcl}
\ds\int_{\R^n}\!\!\!W_i(t',\x)\md\x-\!\!\int_{\R^n}\!\!\!W_i(t,\x)\md\x & \!\!\!\!=\!\!\!\! & \ds\int_t^{t'}\Big(\int_{\R^n}r_i(\x)W_i(s,\x)\md\x\Big)\md s\vspace{3pt}\\
& & \ds-\int_t^{t'}\Big(\int_{\R^n}u_i(s,\x)\md\x\Big)\Big(\int_{\R^n}W_i(s,\x)\md\x\Big)\md s\vspace{3pt}\\
& & \ds-\int_t^{t'}\Big(\int_{\R^n}w_i(s,\x)\md\x\Big)\Big(\int_{\R^n}V_i(s,\x)\md\x\Big)\md s\vspace{3pt}\\
& & \ds+\frac{\delta}{H-1}\sum \limits_{\substack{k=1 \\k \neq i}}^H\int_t^{t'}\!\!\!\!\int_{\R^n}\!(W_k(s,\x)\!-\!W_i(s,\x))\md\x\,\md s.\end{array}
\end{equation}
Let us now bound in absolute value each term of the right-hand side of~\eqref{ineqWitt'}. Since $|r_i(\x)|\le|\rmax|+\alpha\|\x\|^2+\alpha B^2$ by~\eqref{def ri} (we recall that $B=\max_{1\le i\le H}\|\opt_i\|$), it follows from~\eqref{ineqWi} that
$$\begin{array}{l}
\ds\Big|\int_t^{t'}\Big(\int_{\R^n}r_i(\x)W_i(s,\x)\md\x\Big)\md s\Big|\vspace{3pt}\\
\qquad\ds\le\frac{e}{3}\,d_E(\ug,\vg)\,\big[(|\rmax|+\alpha B^2)\theta^{-n}\omega_n(n-1)!+\alpha\,\theta^{-n-2}\omega_n(n+1)!\big]\times|t-t'|
\end{array}$$
for all $0<t<t'\le T_1$. Similarly, using~\eqref{ineqWi} and $\ug\in E$, together with $T_1\le1$, the second term of the right-hand side of~\eqref{ineqWitt'} can be bounded by
$$\Big|\!\int_t^{t'}\!\!\!\!\Big(\int_{\R^n}\!\!u_i(s,\x)\md\x\Big)\Big(\int_{\R^n}\!\!W_i(s,\x)\md\x\Big)\md s\Big|\!\le\!\frac{1}{3}\,d_E(\ug,\vg)\,K\,e^{\rmax^++2}\theta^{-2n}\omega_n^2(n-1)!^2\times|t\!-\!t'|,$$
for all $0<t<t'\le T_1$. Furthermore, since $|w_i(s,\x)|\le d_E(\ug,\vg)\,e^{-\theta\|\x\|}$ and $|V_i(s,\x)|\le K\,e^{(\rmax+1)s-\theta\|\x\|}\le K\,e^{\rmax^++1-\theta\|\x\|}$ for every $1\le i\le H$ and $(s,\x)\in[0,T_1]\times\R^n$ (because $\Vg=\Fm(\vg)\in E$ and $T_1\le1$), the integral in the third line of the right-hand side of~\eqref{ineqWitt'} can be estimated as
$$\Big|\int_t^{t'}\!\!\!\Big(\!\int_{\R^n}\!\!w_i(s,\x)\md\x\Big)\Big(\!\int_{\R^n}\!\!V_i(s,\x)\md\x\Big)\md s\Big|\le d_E(\ug,\vg)\,K\,e^{\rmax^++1}\theta^{-2n}\omega_n^2(n-1)!^2\times|t-t'|.$$
Lastly,~\eqref{ineqWi} implies that
$$\Big|\frac{\delta}{H-1}\sum \limits_{\substack{k=1 \\k \neq i}}^H\int_t^{t'}\!\!\!\!\int_{\R^n}\!(W_k(s,\x)\!-\!W_i(s,\x))\md\x\,\md s\Big|\le\frac23\,\delta\,e\,d_E(\ug,\vg)\,\theta^{-n}\omega_n(n-1)!\times|t-t'|$$
for every $1\le i\le H$ and $0<t<t'\le T_1$. Finally, putting all the above inequalities into~\eqref{ineqWitt'} and remembering the definition~\eqref{defC1} of~$C_1$, it follows that
$$\ds\Big|\int_{\R^n}\!\!\!W_i(t',\x)\md\x-\!\!\int_{\R^n}\!\!\!W_i(t,\x)\md\x\Big|\le C_1\,d_E(\ug,\vg)\times|t-t'|$$
for all $0<t<t'\le T_1$, and then for all $0\le t\le t'\le T_1$ by continuity of $s\mapsto\int_{\R^n}W_i(s,\x)\,\md\x$ in $[0,T_1]$. Remember now that $C_1T_1\le C_1\sqrt{T_1}\le 1/3$ by~\eqref{cond T}, and that $W_i(0,\cdot)=0$ in $\R^N$ for every $1\le i\le H$. As a consequence,
$$\Big\|t\mapsto\int_{\R^n}W_i(t,\x)\,\md\x\Big\|_{C^{0,1/2}([0,T_1])}\le\frac23\,d_E(\ug,\vg)$$
for all $1\le i\le H$. Together with~\eqref{ineqWi}, and using the definition~\eqref{defdE} of $d_E$, one obtains $d_E(\Ug,\Vg)\le\kappa\,d_E(\ug,\vg)$, with $\kappa:=\max(e/3,2/3)=e/3<1$, that is,~\eqref{eq:contractionmapping} is proved.

The Banach-Picard fixed point theorem then implies the existence and uniqueness of~$\ug\in E$ such that $\Fm(\ug)=\ug$. Therefore, $\ug$ belongs to $C([0,T_1]\times\R^n,\R^H)\cap C^{1;2}_{t;\x}((0,T_1]\times\R^n,\R^H)$ and it solves~\eqref{eq:sys H} in $(0,T_1]\times\R^n$ with initial condition $\ug^0$. Furthermore, owing to the definition of $E$ in~\eqref{defE} and the inequality $\theta\le1/\mu$, $\ug$ satisfies~\eqref{ineqexp} in $[0,T_1]\times\R^n$ and, by~\eqref{uLip}, the functions $t\mapsto\int_{\R^n}u_i(t,\x)\,\md\x$ are Lipschitz-continuous in $[0,T_1]$.

Let us now show that $\ug$ can be extended to a global solution, that is, defined for all $t\ge0$. First of all, observe that $T_1$ in~\eqref{cond T} can be written as
$$T_1=\big(A_1+A_2K+A_3K^2\big)^{-1},$$
for some positive constants $A_1$, $A_2$ and $A_3$ that are independent of $K$. Consider then any $T'_1\in(0,T_1)$. From~\eqref{ineqexp} satisfied by~$\ug$ at time $T'_1$, one has $0\le u_i(T'_1,\x)\le K\,e^{(\rmax+1)T'_1}\,e^{-\theta\|\x\|}$ for all $1\le i\le H$ and $\x\in\R^n$. In other words, the continuous functions $u_i(T'_1,\cdot)$ satisfy properties similar to~\eqref{data}-\eqref{init bound}, with $K$ replaced by $K\,e^{(\rmax+1)T'_1}$. From the arguments of the previous paragraphs, there exists a unique solution $\tilde{\ug}=(\tilde{u}_1,\ldots,\tilde{u}_H)\in C([T'_1,T'_2]\times\R^n,\R^H)\cap C^{1;2}_{t,\x}((T'_1,T'_2]\times\R^n,\R^H)$ such that, for every $1\le i\le H$, $\tilde{u}_i(T'_1,\cdot)=u_i(T'_1,\cdot)$ in~$\R^n$,
$$0\le\tilde{u}_i(t,x)\le K\,e^{(\rmax+1)T'_1}\,e^{(\rmax+1)(t-T'_1)-\theta\norm{\x}}=K\,e^{(\rmax+1)t-\theta\norm{\x}}$$
for all $(t,\x)\in[T'_1,T'_2]\times\R^n$ and the functions $t\mapsto\int_{\R^n}\tilde{u}_i(t,\x)\md\x$ are in $C^{0,1}([T'_1,T'_2])$, with
$$T'_2-T'_1=\big(A_1+A_2Ke^{(\rmax+1)T'_1}+A_3K^2e^{2(\rmax+1)T'_1}\big)^{-1}.$$
Since the uniqueness also holds in any interval $[T'_1,T''_2]$ with $T'_1<T''_2\le T'_2$ and since~$\ug$ restricted to $[T'_1,\min(T_1,T'_2)]\times\R^n$ satisfies the same properties as the restriction of~$\tilde{\ug}$ to this set, one gets that these two restrictions are equal. Therefore, the function $\ug=(u_1,\ldots,u_H)$ extended by $\tilde{\ug}$ in $(T_1,T'_2]\times\R^n$ (provided this set is not empty), is then of class $C([0,T'_2]\times\R^n,\R^H)\cap C^{1;2}_{t;\x}((0,T'_2]\times\R^n,\R^H)$. Furthermore, $0\le u_i(t,x)\le K\,e^{(\rmax+1)t-\theta\norm{\x}}$ for all $(t,\x)\in[0,T'_2]\times\R^n$ and $1\le i\le H$, and the functions $t\mapsto\int_{\R^n}u_i(t,\x)\md\,\x$ are in~$C^{0,1}([0,T'_2])$. Since this argument holds for every $T'_1\in(0,T_1)$, it follows that $\ug$ can then be extended in $[0,T_2)\times\R^n$, with
$$T_2-T_1=\big(A_1+A_2Ke^{(\rmax+1)T_1}+A_3K^2e^{2(\rmax+1)T_1}\big)^{-1},$$
and the above properties hold with $[0,T'_2]$ and $(0,T'_2]$ replaced by $[0,T_2)$ and $(0,T_2)$, respectively, with the functions $t\mapsto\int_{\R^n}u_i(t,\x)\md\x$ belonging to $C^{0,1}_{loc}([0,T_2))$.

By an immediate induction, there exists a sequence $(T_m)_{m\ge1}$ of positive real numbers such that, for every $m\ge1$,
\begin{equation}\label{Tm}
T_{m+1}-T_m=\big(A_1+A_2Ke^{(\rmax+1)T_m}+A_3K^2e^{2(\rmax+1)T_m}\big)^{-1}
\end{equation}
and $\ug$ can be extended as a $C([0,T_m)\times\R^n,\R^H)\cap C^{1;2}_{t;\x}((0,T_m)\times\R^n,\R^H)$ function satisfying $0\le u_i(t,x)\le K\,e^{(\rmax+1)t-\theta\norm{\x}}$ for all $(t,\x)\in[0,T_m)\times\R^n$ and $1\le i\le H$, and the functions $t\mapsto\int_{\R^n}u_i(t,\x)\md\,\x$ are in $C^{0,1}_{loc}([0,T_m))$. The sequence $(T_m)_{m\ge1}$ is increasing and is not bounded (otherwise, it would converge to a positive real number while $T_{m+1}-T_m$ would converge to $0$, which would contradict~\eqref{Tm}). Therefore, $T_m\to+\infty$ and the function $\ug$ is then defined in $[0,+\infty)\times\R^n$, and it satisfies all the desired properties of the conclusion of Theorem~\ref{thm:well-pos}.

Finally, if $\vg$ is another solution satisfying all the properties of $\ug$ listed in Theorem~\ref{thm:well-pos}, it follows that the restrictions of $\vg$ and $\ug$ to $[0,T_1]\times\R^n$ belong to $E$ defined in~\eqref{defE}, and that $\Fm(\vg)=\vg$, from the equations satisfied by the functions $v_i$. Therefore, by uniqueness, the restrictions of $\vg$ and $\ug$ to $[0,T_1]\times\R^n$ are equal. By an immediate induction, using the construction of the above sequence $(T_m)_{m\ge1}$, one gets that $\vg$ and $\ug$ are equal in~$[0,T_m)\times\R^n$ for every $m\ge1$, hence $\ug=\vg$ in $[0,+\infty)\times\R^n$. The proof of Theorem~\ref{thm:well-pos} is thereby complete.
\end{proof}

\begin{remark}{\rm It follows from the above proof that, for any $\varepsilon\in(0,1)$, the quantity $\rmax+1$ could be replaced by $\rmax+\varepsilon$ in~\eqref{ineqexp}, at the expense of replacing $\min(\theta,1/\mu)$ by $\min(\theta,\sqrt{\varepsilon}/\mu)$. Indeed, in the above proof, by assuming that $0<\theta\le\sqrt{\varepsilon}/\mu$ without loss of generality, the functions $\overline{U}_i$ in~\eqref{defolUi} would now be defined as $\overline{U}_i(t,\x):=K\;e^{(\rmax+\varepsilon)t-\theta\xi\cdot\x}$ and would satisfy~\eqref{eqolUi}, with $\rmax+\varepsilon$ instead of $\rmax+1$, but they would still fulfill~\eqref{ineqU+}. The rest of the proof is unchanged, with just $(\rmax+1,\rmax^++1,\rmax^++2)$ replaced by $(\rmax+\varepsilon,\rmax^++\varepsilon,\rmax^++\varepsilon+1)$ in the definitions or calculations. As a consequence, if~$\rmax<0$, the solution $\ug$ constructed in Theorem~$\ref{thm:well-pos}$ is such that, for every $1\le i\le H$, $\|u_i(t,\cdot)\|_{L^\infty(\R^n)}\to0$ as $t\to+\infty$ exponentially fast.}
\end{remark}

\begin{proof}[Proof of Proposition~$\ref{theo pers log}$]
$(i)$ We first assume $\lambda_H >0$. Consider the following linear system, for $1 \leq i \leq H$:
\begin{equation}\label{eq:sys H lin}
\partial_t v_i(t,\x)=\frac{\mu^2}{2}\Delta v_i(t,\x)+r_i(\x) v_i(t,\x)+\frac{\delta}{H-1}\sum \limits_{\substack{k=1 \\k \neq i}}^H(v_k(t,\x)-v_i(t,\x)).
\end{equation}
It corresponds to a Malthusian growth of the population. Since $N_i(t)=\int_{\R^n}u_i(t,\x)\,\md\x\ge0$ for all $t\ge0$ by~\eqref{ineqexp}, the solution $\ug$ of~\eqref{eq:sys H}-\eqref{init bound} is a subsolution of~\eqref{eq:sys H lin}, while the function $(t,\x)\mapsto\Phi(\x)\,e^{-\lambda_Ht}$ is a solution, by definition of the principal eigenpair $(\lambda_H,\Phi)$ of~\eqref{eq:eigenvalue_pb}. Since $\ug^0$ is assumed to be compactly supported and $\Phi$ is continuous and positive componentwise, there is a constant $A>0$ such that $\ug^0\le A\,\Phi$ componentwise in $\R^n$. The compa\-rison principle applied to the cooperative system~\eqref{eq:sys H lin} implies that $\ug(t,\x)\le A\,\Phi(\x)\,e^{-\lambda_Ht}$ componentwise for all $(t,\x)\in[0,+\infty)\times\R^n$, hence $N_i(t)\to0$ as $t\to+\infty$ for every $1\le i\le H$.
\vskip 0.3cm
\noindent $(ii)$ We now assume that $\lambda_H=0$. Notice that the previous arguments imply that the continuous functions $N_i$ are bounded in $[0,+\infty)$. More precisely, as in the previous paragraph, since $\ug^0$ is assumed to be compactly supported, there is a constant $A>0$ such that $\ug^0\le A\,\Phi$ componentwise in $\R^n$, hence $\ug(t,\x)\le A\,\Phi(\x)$ componentwise for all $(t,\x)\in[0,+\infty)\times\R^n$. Assume now by way of contradiction that $\liminf_{t\to+\infty}\min_{1\le i\le H}N_i(t)>0$. Then there are $T>0$ and $\varepsilon>0$ such that $N_i(t)=\int_{\R^n}u_i(t,\x)\,\md\x\ge\varepsilon$ for every $1\le i\le H$ and $t\ge T$. For $t\ge T$, the solution $\ug$ is a subsolution of the linear system
$$\partial_t v_i(t,\x)=\frac{\mu^2}{2}\Delta v_i(t,\x)+(r_i(\x)-\varepsilon)v_i(t,\x)+\frac{\delta}{H-1}\sum \limits_{\substack{k=1 \\k \neq i}}^H(v_k(t,\x)-v_i(t,\x)),$$
whereas $(t,\x)\mapsto A\,\Phi(\x)\,e^{-\varepsilon(t-T)}$ is a solution (since $\lambda_H=0$). The maximum principle applied to this cooperative system implies that $\ug(t,\x)\le A\,\Phi(\x)\,e^{-\varepsilon(t-T)}$ for all $(t,\x)\in[T,+\infty)\times\R^n$. Hence, $N_i(t)\to0$ as $t\to+\infty$, for every $1\le i\le H$, contradicting our assumption. As a consequence, $\liminf_{t\to+\infty}\min_{1\le i\le H}N_i(t)=0$.
\vskip 0.3cm
\noindent $(iii)$ Assume finally that $\lambda_H<0$ and assume that the desired conclusion~\eqref{persistence} is not satisfied. Then there are $T>0$ and $\varepsilon\in(0,-\lambda_H)$ such that $N_i(t)=\int_{\R^n}u_i(t,\x)\,\md\x\le-\lambda_H-\varepsilon$ for every $1\le i\le H$ and $t\ge T$. Since $\lambda_H=\lim_{R\to+\infty}\lambda_H^R$, where $\lambda_H^R$ denotes the principal eigenvalue of the operator $\mathcal{A}$ acting on $(C^\infty_0(\overline{B(\mathcal{O},R)}))^H$, there is $R>0$ such that $\lambda_H^R\le\lambda_H+\varepsilon/2$, hence $N_i(t)=\int_{\R^n}u_i(t,\x)\,\md\x\le-\lambda_H^R-\varepsilon/2$ for every $1\le i\le H$ and $t\ge T$. Now, each function $u_i(T,\cdot)$ is continuous and positive in $\R^N$, hence there is $\eta>0$ such that $\ug(T,\x)\ge\eta\,\Phi^R(\x)$ componentwise for all $\x\in\overline{B(\mathcal{O},R)}$, where $\Phi^R\in (C^\infty_0(\overline{B(\mathcal{O},R)}))^H$ denotes a principal eigenvector of $\mathcal{A}$, associated to the principal eigenvalue $\lambda_H^R$ (that is, $\Phi^R$ is positive componentwise in $B(\mathcal{O},R)$, it vanishes on $\partial B(\mathcal{O},R)$ and it satisfies $\mathcal{A}\Phi^R=\lambda^R_H\Phi^R$ in $\overline{B(\mathcal{O},R)}$). The function $(t,\x)\mapsto\eta\,\Phi^R(\x)\,e^{(\varepsilon/2)(t-T)}$ is a solution of the linear system
$$\partial_t v_i(t,\x)=\frac{\mu^2}{2}\Delta v_i(t,\x)+(r_i(\x)+\lambda_H^R+\varepsilon/2)v_i(t,\x)+\frac{\delta}{H-1}\sum \limits_{\substack{k=1 \\k \neq i}}^H(v_k(t,\x)-v_i(t,\x))$$
in $\R\times\overline{B(\mathcal{O},R)}$, with Dirichlet boundary conditions on $\R\times\partial B(\mathcal{O},R)$, whereas $\ug$ is a supersolution in $[T,+\infty)\times\overline{B(\mathcal{O},R)}$, and the functions are compared at time $T$. The maximum principle applied to this cooperative system implies that $\ug(t,\x)\ge\eta\,\Phi^R(\x)\,e^{(\varepsilon/2)(t-T)}$ for all $(t,\x)\in[T,+\infty)\times\overline{B(\mathcal{O},R)}$. Hence, $N_i(t)\ge\int_{B(\mathcal{O},R)}u_i(t,\x)\,\md\x\to+\infty$ as $t\to+\infty$, for every $1\le i\le H$, contradicting our assumption. As a consequence,~\eqref{persistence} is proved, and the proof of Proposition~\ref{theo pers log} is complete.
\end{proof}

\begin{remark}\label{remstat}{\rm We here show that, when $\lambda_H\ge0$, the system~\eqref{eq:sys H} does not admit any positive bounded stationary state. Assume by way of contradiction that such a stationary state $\pg=(p_1,\ldots,p_H)$ exists. It necessarily converges to $0$ exponentially as $\|\x\|\to+\infty$, from the confining properties of the fitnesses $r_i$. Let $R>0$ be such that $r_i(\x)<0$ for all $1\le i\le H$ and $\|\x\|\ge R$. Let also $A>0$ be such that $\pg(\x)\le A\,\Phi(\x)$ componentwise for all $\|\x\|\le R$, where $\Phi=(\varphi_1,\ldots,\varphi_H)$ denotes as usual the, positive, principal eigenvector of~\eqref{eq:eigenvalue_pb}. Notice that, from its positivity, the stationary state $\pg$ of~\eqref{eq:sys H} is a subsolution of the following linear system
$$\frac{\mu^2}{2}\Delta p_i(\x)+r_i(\x)p_i(\x)+\frac{\delta}{H-1}\sum \limits_{\substack{k=1 \\k \neq i}}^H(p_k(\x)-p_i(\x))\ge0,$$
while $\Phi$ is a supersolution of the same linear system, because $\lambda_H\ge0$. The weak maximum principle for this system then yields $\pg(\x)\le A\,\Phi(\x)$ componentwise for all $\|\x\|\ge R$. Hence, $\pg(\x)\le A\,\Phi(\x)$ componentwise for all $\x\in\R^n$. Let finally $\ug$ be the solution of~\eqref{eq:sys H} given by Theorem~\ref{thm:well-pos}, with initial condition $\ug^0:=\pg$. On the one hand, by the uniqueness property of that theorem, there holds $\ug(t,\x)\equiv\pg(\x)$ for all $t\ge0$ and $\x\in\R^n$. On the other hand, the same arguments as in the proof of Proposition~\ref{theo pers log} in the cases $\lambda_H>0$ or $\lambda_H=0$ then imply that, in both cases, one has at least $\liminf_{t\to+\infty}\big(\min_{1\le i\le H}\int_{\R^n}u_i(t,\x)\,\md\x\big)=0$. This leads to a contradiction, since $\int_{\R^n}u_i(t,\x)\,\md\x=\int_{\R^n}p_i(\x)\,\md\x>0$ is independent of $t\ge0$, for every $1\le i\le H$. As a conclusion,~\eqref{eq:sys H} does not admit any positive bounded stationary state when $\lambda_H\ge0$.
}
\end{remark}

\begin{proof}[Proof of Proposition~$\ref{prop:statio_states}$]
\noindent $(i)$ Assume that $H=1$ and $\lambda_1<0$. In this case, any positive bounded stationary state $p_1$ necessarily decays to $0$ at least exponentially as $\|\x\|\to+\infty$, and then belongs to $H^1(\R^n)\cap L^2_w(\R^n)$ from standard elliptic estimates. It is also a principal eigenfunction of the operator $\A$ defined in~\eqref{eq:defA}, with associated principal eigenvalue $-\int_{\R^n}p_1$. By uniqueness of the eigenpair $(\lambda_1,\varphi_1 )$, up to a multiplication of $\varphi_1$ by positive constants, we deduce that there exists $\kappa>0$ such that $p_1=\kappa\,\varphi_1$ in $\R^n$, and that $\int_{\R^n}p_1=-\lambda_1$. We get that $\kappa=-\lambda_1/\lp \int_{\R^n}\varphi_1\rp$. As it is immediate to check that $\kappa\,\varphi_1$ is a positive bounded stationary state of~\eqref{eq:sys H}, this shows~$(i)$.
\vskip 0.3cm
\noindent $(ii)$ Assume that $H=2$ and $\lambda_2<0.$ Without loss of generality, up to rotation and translation of the phenotypic space, we can fix the optima $\opt_1$ and $\opt_2$ at $\opt _1:=(-\beta,0,\cdots,0),$ $\opt _2:=(\beta,0,\cdots,0),$ for some $\beta\geq 0$. First, we recall from~\cite{HamLavRoq20} that, by uniqueness of the principal eigenfunctions (up to multiplication) and by symmetry of the problem~\eqref{eq:sys H} with respect to the origin, we have $\varphi_2(\x)=\varphi_1(\iota(\x))>0$ for all $\x\in\R^n$, with
$$\iota(\x):=\iota(x_1, x_2,\ldots,x_n)=(-x_1, x_2,\ldots,x_n).$$
It follows that $\int_{\R^n}\varphi_1=\int_{\R^n}\varphi_2>0$. Furthermore, the principal eigenfunctions satisfy:
\begin{equation} \label{eq:eigenpb_H2}
-\ds \frac{\mu^2} 2\,\Delta \varphi_i(\x)-r_i(\x)\varphi_i(\x)+\delta(\varphi_i(\x)-\varphi_j(\x))=\lambda_2\,\varphi_i,\ \ \x\in\R^n,
\end{equation}
for $(i,j)=(1,2)$ and $(i,j)=(2,1)$. Define $(p_1,p_2)$ as in~\eqref{eq:2hosts_statio_state} and $\rb_i$ as in~\eqref{eq:def_rbar}, with $u_i(t,\y)$ replaced by $p_i(\y)$. Owing to the definition~\eqref{eq:2hosts_statio_state}, integrating~\eqref{eq:eigenpb_H2} over $\R^n$ and using that the $\varphi_i$'s and its first order derivatives decay at least exponentially as $\|\x\|\to+\infty$, we infer that $\rb_1=\rb_2=-\lambda_2$. We also note that $\int_{\R^n}p_1=\int_{\R^n}p_2=-\lambda_2$. Dividing \eqref{eq:eigenpb_H2} by $\int_{\R^n}\varphi_i=\int_{\R^n}\varphi_j$, and multiplying by $-\lambda_2$ we then get:
$$ -\ds \frac{\mu^2} 2\,\Delta p_i(\x)-r_i(\x)p_i (\x)+\delta (p_i(\x)-p_j(\x)) = -\Big(\int_{\R^n}p_i(\y)\,\md\y\Big)\,p_i(\x),\ \ \x\in \R^n,$$
for $(i,j)=(1,2)$ and $(i,j)=(2,1)$. This proves $(ii)$.
\vskip 0.3cm
\noindent $(iii)$ Assume that $H\ge3$ and consider the optima $\opt_j$'s located at $\opt_1:=(-\beta,0,\ldots,0)$ and $\opt_i:=(\beta,0,\cdots,0)$ for all $2\le i\le H$, with $\beta>0$. Let $(\lambda_H,\Phi)$ be the principal eigenpair of~\eqref{eq:eigenvalue_pb}, with $\Phi=(\varphi_1,\ldots,\varphi_H)$ here normalised so that $\int_{\R^n}\varphi_1=1$. First of all, by uniqueness, we readily check that $\varphi_i=\varphi_2$ for all $2\le i\le H$, hence the eigenfunctions satisfy:
\begin{equation}\label{eq:phi1phi2}
\left\{\begin{array}{l}
-\ds\frac{\mu^2} 2\,\Delta \varphi_1(\x) - r_1(\x) \varphi_1(\x)+\delta(\varphi_1(\x)-\varphi_2(\x))=\lambda_H \, \varphi_1(\x), \vspace{3mm} \\
-\ds \frac{\mu^2} 2 \ \Delta \varphi_2(\x) - r_2(\x) \varphi_2 (\x) +\frac{\delta}{H-1}(\varphi_2(\x)-\varphi_1(\x))  = \lambda_H \, \varphi_2(\x).\end{array}\right.
\end{equation}

We now claim that, for all $\beta>0$ large enough, no positive multiple of $\Phi$ is a statio\-nary state of the system~\eqref{eq:sys H}. Assume by way of contradiction that there exist a sequence $(\beta_m)_{m\in\N}$ diverging to $+\infty$ and a sequence $(\kappa_m)_{m\in\N}$ of positive real numbers such that $(\kappa_m\varphi_{1,m},\kappa_m\varphi_{2,m},\ldots,\kappa_m\varphi_{2,m})$ is a positive bounded stationary state of~\eqref{eq:sys H}, associated with the optima $\opt_{1,m}:=(-\beta_m,0,\ldots,0)$ and $\opt_{2,m}=\cdots=\opt_{H,m}:=(\beta_m,0,\ldots,0)$. Call~$(\lambda_{H,m})_{m\in\N}$ the sequence of associated principal eigenvalues, and normalise the principal eigenvectors so that $\int_{\R^n}\varphi_{1,m}(\x)\md\x=1$. For each $m\in\N$, plugging the solution $(\kappa_m\varphi_{1,m},\kappa_m\varphi_{2,m},\ldots,\kappa_m\varphi_{2,m})$ into~\eqref{eq:sys H} yields $\kappa_m\int_{\R^n}\varphi_{i,m}(\x)\,\md\x=-\lambda_H$ for every $1\le i\le H$, hence $\int_{\R^n} \varphi_{2,m}=\int_{\R^n} \varphi_{1,m}=1$ in~\eqref{eq:phi1phi2} (with $\varphi_1$ and $\varphi_2$ replaced by~$\varphi_{1,m}$ and~$\varphi_{2,m}$). From the conclusion of Proposition~\ref{prop bound lambda} (whose proof is independent of this one), each $\lambda_{H,m}$ satisfies
$$\lambda_1\le\lambda_{H,m}\le\lambda_1+\delta=\frac{\mu n\sqrt{\alpha}}{2}-\rmax+\delta.$$
Therefore, up to extraction of a subsequence one can assume that
$$\lambda_{H,m}\to\lambda_\infty\in\R\ \hbox{ as $m\to+\infty$}.$$

Call now $\phi_m:=\varphi_{1,m}(\cdot+\opt_{1,m})$ and $\psi_m:=\varphi_{2,m}(\cdot+\opt_{1,m})=\varphi_{2,m}(\cdot-\opt_{2,m})$. From~\eqref{eq:phi1phi2}, one has
\begin{equation}\label{eqphim}
-\frac{\mu^2}2\,\Delta \phi_m(\x)-\Big(\rmax-\alpha\,\frac{\|\x\|^2}{2}\Big)\phi_m(\x)+\delta(\phi_m(\x)-\psi_m(\x))=\lambda_{H,m}\,\phi_m(\x)
\end{equation}
for all $m\in\N$ and $\x\in\R^n$. Since $\int_{\R^n}\phi_m=\int_{\R^n}\psi_m=1$ for all $m\in\N$, it follows that the sequence $(\Delta\phi_m)_{m\in\N}$ is bounded in $L^1_{loc}(\R^n)$ and then (see~\cite{Pon16}\footnote{The authors are grateful to L.~Dupaigne and F.~Murat for helpful discussions on this point.}) that the sequence $(\phi_m)_{m\in\N}$ is bounded in $W^{1,p}_{loc}(\R^n)$ for every $1\le p<n/(n-1)$ if $n\ge2$ (respectively bounded in $W^{1,\infty}_{loc}(\R)$ if $n=1$). Therefore, up to extraction of a subsequence, there is a nonnegative function $\phi_\infty$ in $W^{1,p}_{loc}(\R^n)$ for every $1\le p<n/(n-1)$ if $n\ge2$ (respectively for every $1\le p\le+\infty$ if $n=1$) such that $\phi_m\to\phi_\infty$ almost everywhere in $\R^n$ and in~$L^q_{loc}(\R^n)$ for every $1\le q<n/(n-2)$ if $n\ge3$ (respectively for every $1\le q<+\infty$ if $n=2$, respectively in $L^\infty_{loc}(\R)$ if $n=1$), and $\p_{x_j}\phi_m\rightharpoonup\p_{x_j}\phi_\infty$ weakly in $L^p_{loc}(\R^n)$ for every $1\le p<n/(n-1)$ if $n\ge2$ (respectively for every $1\le p<+\infty$ if $n=1$). Since $\int_{B(\mathcal{O},R)}\phi_\infty=\lim_{m\to+\infty}\int_{B(\mathcal{O},R)}\phi_m\le1$ for every $R>0$, it follows from the monotone convergence theorem that $\int_{\R^n}\phi_\infty\le1$. On the other hand, by integrating~\eqref{eqphim} over $\R^n$ and using that $\int_{\R^n}\phi_m=\int_{\R^n}\psi_m=1$, one gets that
\begin{equation}\label{phim}
\frac{\alpha R^2}{2}\int_{\R^n\setminus B(\mathcal{O},R)}\phi_m(\x)\,\md\x\le\frac{\alpha}{2}\int_{\R^n}\|\x\|^2\phi_m(\x)\,\md\x=\lambda_{H,m}+\rmax\le\frac{\mu n\sqrt{\alpha}}{2}+\delta
\end{equation}
for all $m\in\N$ and $R>0$. Since the nonnegative functions $\phi_m$ converge to $\phi_\infty$ in $L^1_{loc}(\R^n)$ and have unit $L^1(\R^n)$ norm, it follows that $\int_{\R^n}\phi_\infty(\x)\,\md\x=1$. As in~\eqref{phim}, by using now $\psi_m(\cdot+2\opt_{2,m})=\varphi_{2,m}(\cdot+\opt_{2,m})$ instead of $\phi_m=\varphi_{1,m}(\cdot+\opt_{1,m})$, one also has
$$\frac{\alpha R^2}{2}\!\!\int_{\R^n\setminus B(2\opt_{2,m},R)}\!\psi_m(\x)\,\md\x\le\frac{\alpha}{2}\!\int_{\R^n}\!\|\x\|^2\psi_m(\x\!+\!2\opt_{2,m})\,\md\x=\lambda_{H,m}\!+\!\rmax\le\frac{\mu n\sqrt{\alpha}}{2}\!+\!\delta$$
for all $m\in\N$ and $R>0$. Since $\psi_m\ge0$ in $\R^n$ and since $\opt_{2,m}=(\beta_m,0,\ldots,0)$ with $\lim_{m\to+\infty}\beta_m=+\infty$, it follows that $\psi_m\to0$ as $m\to+\infty$ in $L^1_{loc}(\R^n)$. Passing to the limit as $m\to+\infty$ in~\eqref{eqphim} after multiplying against any $C^\infty(\R^n)$ function with compact support, it follows that $\phi_\infty$ is a distributional solution of
\begin{equation}\label{eqphiinfty}
-\frac{\mu^2}2\,\Delta \phi_\infty-\Big(\rmax-\alpha\,\frac{\|\x\|^2}{2}\Big)\phi_\infty(\x)+\delta\phi_\infty=\lambda_\infty\,\phi_\infty
\end{equation}
in $\R^n$. Since $\phi_\infty\in W^{1,p}_{loc}(\R^n)$ for every $1\le p<n/(n-1)$ if $n\ge2$ (respectively for every $1\le p\le+\infty$ if $n=1$), one infers from the weak formulation of~\eqref{eqphiinfty} that $\partial_{x_j}\phi_\infty$ actually belongs to $W^{1,p}_{loc}(\R^n)$ for every $1\le j\le n$ and then $\phi_\infty\in W^{2,p}_{loc}(\R^n)$, for every $1\le p<n/(n-1)$ if $n\ge2$ (respectively for every $1\le p\le+\infty$ if $n=1$). Therefore, by bootstrapping and differentiating, it follows that $\phi_\infty$ is a $C^\infty(\R^n)$ solution of~\eqref{eqphiinfty}. Since it is nonnegative with unit $L^1(\R^n)$ mass, the strong elliptic maximum principle implies that~$\phi_\infty>0$ in $\R^n$. In particular, there is $R_0>0$ such that the function~$\phi_\infty$ is subharmonic in~$\R^n\setminus B(\mathcal{O},R_0)$. Hence,
$$0<\phi_\infty(\x)\le n\,\omega_n^{-1}\int_{B(\x,1)}\phi_\infty(\y)\,\md\y\le n\,\omega_n^{-1}\int_{\R^n\setminus B(\mathcal{O},\|\x\|-1)}\phi_\infty(\y)\,\md\y$$
for all $\|\x\|\ge R_0+1$, where $\omega_n/n$ is the Lebesgue measure of the Euclidean unit ball of $\R^n$. Since $\phi_\infty\in L^1(\R^n)$, one then deduces that $\phi_\infty(\x)\to0$ as $\|\x\|\to+\infty$. On the other hand, for every $\sigma>0$, there is $R_1>0$ such that $\alpha\|\x\|^2/2-\rmax+\delta\ge\lambda_\infty$ for all $\|\x\|\ge R_1$ and such that the function $\x\mapsto e^{-\sigma\|\x\|}$ is a supersolution of~\eqref{eqphiinfty} in $\R^n\setminus B(\mathcal{O},R_1)$. Therefore, by choosing $A>0$ such that $\max_{\partial B(\mathcal{O},R_1)}\phi_\infty\le A\,e^{-\sigma R_1}$, the weak maximum principle implies that $0<\phi_\infty(\x)\le A\,e^{-\sigma\|\x\|}$ for all $\|\x\|\ge R_1$. As a consequence, $\phi_\infty$ converges to $0$ as $\|\x\|\to+\infty$ faster than any exponentially decaying function. So does then the function $\x\mapsto\|\x\|^2\phi_\infty(\x)$, hence standard elliptic estimates imply that $\|\nabla\phi_\infty\|$ also converges to $0$ as $\|\x\|\to+\infty$ faster than any exponentially decaying function. Finally, $\phi_\infty\in H^1(\R^n)\cap L^2_w(\R^n)$ and, since it is positive, it is therefore the principal eigenfunction of the operator $-(\mu^2/2)\Delta-(\rmax-\alpha\|\x\|^2/2)+\delta$, with principal eigenvalue $\lambda_\infty$. Hence,
\begin{equation}\label{lambdainfty}
\lambda_\infty=\lambda_1+\delta=\frac{\mu n\sqrt{\alpha}}{2}-\rmax+\delta.
\end{equation}

Finally, by arguing similarly with the functions $\tilde\phi_m:=\varphi_{1,m}(\cdot+\opt_{2,m})$ and $\tilde\psi_m:=\varphi_{2,m}(\cdot+\opt_{2,m})$, and passing to the limit as $m\to+\infty$ in the equation
$$-\frac{\mu^2}2\,\Delta\tilde\psi_m(\x)-\Big(\rmax-\alpha\,\frac{\|\x\|^2}{2}\Big)\tilde\psi_m(\x)+\frac{\delta}{H-1}(\tilde\psi_m(\x)-\tilde\phi_m(\x))=\lambda_{H,m}\,\tilde\psi_m(\x),$$
one similarly gets that
\begin{equation}\label{conclusion}
\lambda_\infty=\lambda_1+\frac{\delta}{H-1}=\frac{\mu n\sqrt{\alpha}}{2}-\rmax+\frac{\delta}{H-1},
\end{equation}
contradicting~\eqref{lambdainfty}, since $\delta>0$ and $H\ge 3$. The proof of Proposition~\ref{prop:statio_states} is thereby complete.
\end{proof}


\subsection{Proofs of Propositions~\ref{prop monolambda}-\ref{prop bound lambda} and~\ref{prop:lambda-infini}}\label{sec52}

We actually start with the proof of Proposition~\ref{prop bound lambda}, since its conclusion is used in the proof of Proposition~\ref{prop monolambda}.

\begin{proof}[Proof of Proposition~$\ref{prop bound lambda}$] First of all, since
\begin{equation}\label{young}
2\int_{\R^n}\phi(\x)\,\psi(\x)\,\md\x\le\int_{\R^n}(\phi(\x))^2\,\md\x+\int_{\R^n}(\psi(\x))^2\,\md\x
\end{equation}
for any two $L^2(\R^n)$ functions $\phi$ and $\psi$, and since the principal eigenvector $\Phi=(\varphi_1,\ldots,\varphi_H)$ of $\mathcal{A}$ is normalised in $L^2(\R^n)^H$, the first inequality in~\eqref{ineqs} follows immediately. The second inequality follows from $\lambda_H(\delta,\alpha,\mu,\opt_1,\ldots,\opt_H)=Q_H(\Phi)$ and the fact that $\lambda_1$ is the minimum of $Q_H$ when $\delta=0$:
$$\sum_{i=1}^H\lp\!\frac{\mu^2}{2\!\!}\int_{\R^n}\!\!\|\nabla\varphi_i(\x)\|^2\,\md\x-\!\int_{\R^n}r_i(\x)\,(\varphi_i(\x))^2\,\md\x\!\rp\!\!\ge\sum_{i=1}^H\lambda_1(\alpha,\mu)\!\int_{\R^n}\!\!(\varphi_i(\x))^2\,\md\x=\lambda_1(\alpha,\mu).$$
Now, recalling that $G_i=G(\cdot-\opt_i)$ and testing the Rayleigh quotient $Q_H$ at $\Psi:=(G_1/\sqrt{H},\ldots,G_H/\sqrt{H})$, we get that
$$\begin{array}{rcl}
\lambda_H(\delta,\alpha,\mu,\opt_1,\ldots,\opt_H) & \leq & Q_H(\Psi)\vspace{3pt}\\
& = & \ds\lambda _1(\alpha,\mu)+\delta\left(1-\sum_{1\le i< j\le H}\frac{2}{H(H-1)}\int_{\R^n}G_i(\x)\,G_j(\x)\,\md\x\right)\vspace{3pt}\\
& \le & \lambda _1(\alpha,\mu)+\delta.\end{array}$$
Lastly, for any $\Gamma \in H^1(\R^n)\cap L^{2}_w(\R^n)$ such that $\int_{\R^n} \Gamma^2=1/H$, we observe that the quantity $Q_H(\Gamma,\ldots,\Gamma)$ is independent of $\delta$ and therefore the Rayleigh formula \eqref{Rayleigh}-\eqref{eq:rayleigh_Q} (together with the lower bound $\lambda_H(\delta,\alpha,\mu,\opt_1,\ldots,\opt_H)\ge\mu\,n\,\sqrt{\alpha}/2-\rmax$) implies that $\lambda_H(\delta,\alpha,\mu,\opt_1,\ldots,\opt_H)$ is bounded independently of $\delta$ (for every given $\alpha>0$, $\mu>0$ and $(\opt_i)_{1\le i\le H}\in(\R^n)^H$).
\end{proof}

\begin{proof}[Proof of Proposition~$\ref{prop monolambda}$]
Let us begin with the proof of the continuity of the map
$$(\delta,\alpha,\mu,\opt_1,\ldots,\opt_H)\mapsto\lambda_H(\delta,\alpha,\mu,\opt_1,\ldots,\opt_H)$$
in $(0,+\infty)^3\times(\R^n)^H$. Fix $(\delta,\alpha,\mu,\opt_1,\ldots,\opt_H)\in(0,+\infty)^3\times(\R^n)^H$ and consider any sequence $(\delta^m,\alpha^m,\mu^m,\opt^m_1,\ldots,\opt^m_H)_{m\in\N}$ in $(0,+\infty)^3\times(\R^n)^H$ and converging to~$(\delta,\alpha,\mu,\opt_1,\ldots,\opt_H)$. Since the sequence $(\lambda_H(\delta^m,\alpha^m,\mu^m,\opt^m_1,\ldots,\opt^m_H))_{m\in\N}$ is bounded by Proposition~\ref{prop bound lambda}, one can assume that it converges to some real number $\lambda$, up to extraction of a subsequence. For each $m\in\N$, call $\Phi^m:=(\varphi_1^m,\ldots,\varphi_H^m)$ the normalised principal eigenvector associated to the principal eigenvalue $\lambda_H(\delta^m,\alpha^m,\mu^m,\opt^m_1,\ldots,\opt^m_H)$. As at the beginning of the proof of Proposition~\ref{prop bound lambda}, the inequality~\eqref{young} applied to $\int_{\R^n}\varphi_i^m\varphi_j^m$ implies that
$$\begin{array}{r}
\ds\sum _{i=1}^{H}\lp\frac{(\mu^m)^2}{2}\int_{\R^{n}} \|\nabla\varphi_i^m(\x)\|^2\,\md\x+\frac{\alpha^m}{2}\int_{\R^{n}}\|\x-\opt_i^m\|^2(\varphi_i^m(\x))^{2}(\x)\,\md\x\rp\vspace{3pt}\\
\le Q_H(\Phi^m)+\rmax=\lambda_H(\delta^m,\alpha^m,\mu^m,\opt^m_1,\ldots,\opt^m_H)+\rmax\end{array}$$
for all $m\in\N$. Together with $\int_{\R^n}\|\Phi^m(\x)\|^2\,\md\x=1$, the sequence $(\Phi^m)_{m\in\N}=((\varphi_1^m,\ldots,\varphi_H^m))_{m\in\N}$ is bounded in $(H^1(\R^n)\cap L^2_w(\R^n))^H$. Hence, up to extraction of a subsequence, it converges weakly in $H^1(\R^n)^H$, strongly in $L^2_{loc}(\R^n)^H$, and almost everywhere in~$\R^n$, to a certain limit $\Phi^\infty:=(\varphi_{1}^\infty,\ldots,\varphi_{H}^\infty)\in(H^1(\R^n)\cap L^2_w(\R^n))^H$, which is non\-negative componentwise. The boundedness of the sequences $(\int_{\R^n}\|\x\|^2(\varphi_i^m(\x))^2\,\md\x)_{m\in\N}$ also implies (as for the function $\phi_\infty$ after~\eqref{phim}) that the sequence $(\Phi^m)_{m\in\N}$ converges to~$\Phi^\infty$ in~$L^2(\R^n)^H$, and that
$$\int_{\R^n}\|\Phi^\infty(\x)\|^2\md\x=1.$$
By passing to the limit weakly in~\eqref{eq:eigenvalue_pb} (with parameters $(\delta^m,\alpha^m,\mu^m,\opt^m_1,\ldots,\opt^m_H)$), it follows that the pair $(\lambda,\Phi^\infty)$ is a weak solution of~\eqref{eq:eigenvalue_pb}, with $\Phi^\infty$ being norma\-lised in~$L^2(\R^n)^H$ and nonnegative componentwise. From elliptic regularity, $\Phi^\infty$ is actually a~$C^\infty(\R^n)^H$ solution of this problem and, from the strong elliptic maximum principle, it is positive componentwise in $\R^n$. By uniqueness of the principal eigenpair of~\eqref{eq:eigenvalue_pb}, one infers that $\Phi^\infty=\Phi$ and $\lambda=\lambda_H(\delta,\alpha,\mu,\opt_1,\ldots,\opt_H)$. By uniqueness of the limit, it also follows that $\Phi^m\to\Phi$ in $L^2(\R^n)^H$ as $m\to+\infty$. One has then shown the continuity of the map $(\delta,\alpha,\mu,\opt_1,\ldots,\opt_H)\mapsto\lambda_H(\delta,\alpha,\mu,\opt_1,\ldots,\opt_H)$ in $(0,+\infty)^3\times(\R^n)^H$. Together with the inequalities~\eqref{ineqs} and the definition~\eqref{deflambdaH0}, one concludes that the map~$(\delta,\alpha,\mu,\opt_1,\ldots,\opt_H)\mapsto\lambda_H(\delta,\alpha,\mu,\opt_1,\ldots,\opt_H)$ is continuous in~$[0,+\infty)\times(0,+\infty)^2\times(\R^n)^H$.

Next, since the quantity $Q_H(\Psi)$ in~\eqref{eq:rayleigh_Q} is affine with respect to $(\delta,\alpha,\mu)\in(0,+\infty)^3$ for each $\Psi\in(H^1(\R^n)\cap L^2_w(\R^n))^H$, one gets from~\eqref{Rayleigh} that the map $(\delta,\alpha,\mu)\mapsto\lambda_H(\delta,\alpha,\mu,\opt_1,\ldots,\opt_H)$ is concave in $(0,+\infty)^3$, for each $(\opt_i)_{1\le i\le H}\in(\R^n)^H$. Together with the continuity proved in the previous paragraph, one infers that the map $(\delta,\alpha,\mu)\mapsto\lambda_H(\delta,\alpha,\mu,\opt_1,\ldots,\opt_H)$ is concave in $[0,+\infty)\times(0,+\infty)^2$.

Consider now $\delta\ge0$, $\alpha>0$, $0<\mu_1<\mu_2$, $(\opt_i)_{1\le i\le H}\in(\R^n)^H$, and let us show that $\lambda_H(\delta,\alpha,\mu_1,\opt_1,\ldots,\opt_H)<\lambda_H(\delta,\alpha,\mu_2,\opt_1,\ldots,\opt_H)$. The case $\delta=0$ is obvious by~\eqref{deflambdaH0}, so let us assume that $\delta>0$. Let $\Phi_1$ and $\Phi_2$ be the principal eigenvectors associated to the principal eigenvalues $\lambda_H(\delta,\alpha,\mu_1,\opt_1,\ldots,\opt_H)$ and $\lambda_H(\delta,\alpha,\mu_2,\opt_1,\ldots,\opt_H)$, respectively, and let $Q_{H,1}$ and $Q_{H,2}$ be the functionals defined in~\eqref{eq:rayleigh_Q}, associated with $\mu_1$ and $\mu_2$, respectively. Since no component of the vector $(\varphi_1,\ldots,\varphi_H):=\Phi_2$ is constant, one has $\int_{\R^n}\|\nabla\varphi_i(\x)\|^2\,\md\x>0$ for each $1\le i\le H$, hence
$$\lambda_H(\delta,\alpha,\mu_2,\opt_1,\ldots,\opt_H)=Q_{H,2}(\Phi_2)>Q_{H,1}(\Phi_2)\ge\lambda_H(\delta,\alpha,\mu_1,\opt_1,\ldots,\opt_H).$$

The strict increasing monotonicity of $\lambda_H(\delta,\alpha,\mu,\opt_1,\ldots,\opt_H)$ with respect to $\alpha>0$ can be shown similarly. Lastly, the monotonicity of $\lambda_H(\delta,\alpha,\mu,\opt_1,\ldots,\opt_H)$ with respect to $\delta\ge0$ will actually be shown independently in Proposition~\ref{prop:fitness-gain-small-delta} in Section~\ref{sec53}.
\end{proof}

\begin{proof}[Proof of Proposition~$\ref{prop:lambda-infini}$]
The fist part leading to~\eqref{eq-large-delta} is actually a straightforward adaptation of the proof of \cite[Proposition~4~$(i)$]{HamLavRoq20} performed in the case $H=2$. Details are omitted. Next, since $R_H(\x)= \rmax -\alpha\|\x-\mathbf{M}\|^2/2+R_H(\mathbf{M})-\rmax$, we get~\eqref{lambda-delta-gd-centre-masse}.
\end{proof}


\subsection{Proofs of Propositions~\ref{prop:fitness-gain-small-delta}-\ref{prop:projection}}\label{sec53}

\begin{proof}[Proof of Proposition~$\ref{prop:fitness-gain-small-delta}$]
The parameters $\alpha>0$ and $\mu>0$ are fixed throughout the proof, as are the optima $(\opt_i)_{1\le i\le H}\in(\R^n)^H$. Let us first show that the map $\delta\mapsto\lambda_H(\delta)$ is differentiable in $(0,+\infty)$. For $\delta>0$ and $\Psi=(\psi_1,\ldots,\psi_H)\in(H^1(\R^n)\cap L^2_{w}(\R^n))^H,$ we denote by $Q_H(\Psi,\delta)$ the Rayleigh quotient~\eqref{eq:rayleigh_Q}. We also denote $\Phi^{\delta}=(\varphi_{1}^\delta,\ldots,\varphi_{H}^\delta)$ the principal (normalised) eigenvector associated with $\lambda_H(\delta)$, so that $\lambda_H(\delta)=Q_H(\Phi^{\delta},\delta)=\min_{\Psi\in(H^1(\R^n)\cap L^2_{w}(\R^n))^H}Q_H(\Psi,\delta)$.

Fix any $\delta>0$. We notice that, for $\varepsilon>0$,
\begin{equation}\label{notice}
\frac{Q_H(\Phi^{\delta\!+\!\varepsilon},\delta\!+\!\varepsilon)\!-\!Q_H(\Phi^{\delta\!+\!\varepsilon},\delta)}{\varepsilon}\le\frac{\lambda_H(\delta\!+\!\varepsilon)\!-\!\lambda_H(\delta)}\varepsilon \le \frac{Q_H(\Phi^{\delta},\delta\!+\!\varepsilon)\!-\!Q_H(\Phi^{\delta},\delta)}{\varepsilon},
\end{equation}
together with
\begin{equation}\label{notice2}
\frac{Q_H(\Phi^{\delta+\varepsilon},\delta+\varepsilon)-Q_H(\Phi^{\delta+\varepsilon},\delta)}{\varepsilon}=1-\!\sum_{1\le i< j\le H}\!\frac{2}{H-1}\int_{\R^n}\varphi_{i}^{\delta+\varepsilon}(\x)\,\varphi_{j}^{\delta+\varepsilon}(\x)\,\md\x
\end{equation}
and
\begin{equation}\label{notice3}
\frac{Q_H(\Phi^{\delta},\delta+\varepsilon)-Q_H(\Phi^{\delta},\delta)}{\varepsilon}=1-\!\sum_{1\le i< j\le H}\!\frac{2}{H-1}\int_{\R^n}\varphi_{i}^{\delta}(\x)\,\varphi_{j}^{\delta}(\x)\,\md\x.
\end{equation}
From the proof of the continuity property in Proposition~\ref{prop monolambda}, one knows that $\Phi^{\delta+\varepsilon}\to\Phi^\delta$ in $L^2(\R^n)^H$ as $\varepsilon\to0$. Thus, inserting~\eqref{notice2} and~\eqref{notice3} into~\eqref{notice} and passing to the limit as~$\varepsilon\to0^+$, we get
$$\frac{\lambda_H(\delta+\varepsilon)-\lambda_H(\delta)}{\varepsilon}\mathop{\longrightarrow}_{\varepsilon\to0^+}1-\sum _{1\le i<j\le h}\frac{2}{H-1}\int_{\R^n}\varphi_{i}^{\delta}(\x)\,\varphi_{j}^{\delta}(\x)\,\md\x.$$
By reverting the inequalities in~\eqref{notice} when $\varepsilon\in(-\delta,0)$, and using the arbitrariness of $\delta>0$, one then concludes that the map $\delta\mapsto\lambda_H(\delta)$ is differentiable in $(0,+\infty)$, with
\begin{equation}
\label{der-lambda-H}
\lambda_H'(\delta)=1-\sum _{1\le i<j\le H}\frac{2}{H-1}\int_{\R^n}\varphi_{i}^{\delta}(\x)\,\varphi_{j}^{\delta}(\x)\,\md\x\,<\,1\ \hbox{ for all }\delta>0.
\end{equation}

Notice also that, if the optima $\opt_i$ are all identical, then by uniqueness the functions~$\varphi^\delta_{i}$ are also all identical, hence $\int_{\R^n}\varphi_{i}^{\delta}(\x)\,\varphi_{j}^{\delta}(\x)\,\md\x=1/H$ for all $1\le i\le H$ (remember that $\Phi^{\delta}=(\varphi_{1}^\delta,\ldots,\varphi_{H}^\delta)$ has unit $L^2(\R^n)^H$ norm) and $\lambda'_H(\delta)=0$ for all $\delta>0$. On the other hand, if the optima $\opt_i$ are not all identical, then, for any $\delta>0$, the functions $\varphi^\delta_{i}$ are not all identical: otherwise, each $\varphi^\delta_{i}\in H^1(\R^n)\cap L^2_w(\R^n)$ would be a classical positive solution of $-(\mu^2/2)\Delta\varphi^\delta_{i}(\x)-r_i(\x)\,\varphi^\delta_{i}(\x)=\lambda_H(\delta)\,\varphi^\delta_{i}(\x)$ in $\R^n$ and then $r_i(\x)\equiv r_j(\x)$ in~$\R^n$ and $\opt_i=\opt_j$ for all $1\le i,j\le H$, a contradiction. Thus, for every $\delta>0$, there are $1\le i_0<j_0\le H$ such that $\varphi^\delta_{i_0}\not\equiv\varphi^\delta_{j_0}$ in $\R^n$, hence
$$\int_{\R^n}\varphi_{i_0}^{\delta}(\x)\,\varphi_{j_0}^{\delta}(\x)\,\md\x<\frac12\int_{\R^n}(\varphi_{i_0}^{\delta}(\x))^2\,\md\x+\frac12\int_{\R^n}(\varphi_{j_0}^{\delta}(\x))^2\,\md\x,$$
while the large inequality always holds for every $1\le i,j\le H$. Therefore,~\eqref{der-lambda-H} and the normalisation of  $\Phi^{\delta}=(\varphi_{1}^\delta,\ldots,\varphi_{H}^\delta)$ imply that $\lambda'_H(\delta)>0$ for all $\delta>0$.

Let us now show the differentiability of $\delta\mapsto\lambda_H(\delta)$ at $\delta=0$. We recall that
$$\lambda_H(0)=\lambda_1=\frac{\mu n\sqrt{\alpha}}{2}-\rmax=\frac{\mu^2}{2}\int_{\R^n}\|\nabla G_i(\x)\|^2\,\md\x-\int_{\R^n}r_i(\x)\,(G_i(\x))^2\,\md\x$$
for every $1\le i\le H$, with $G_i(\x)=G(\x-\opt_i)$ and $G$ defined in~\eqref{def:gaussian}. We also observe that Proposition~\ref{prop bound lambda} implies that the function $\delta\mapsto\lambda_H(\delta)$ is continuous at $0$, and that $0\le\liminf_{\delta\to0^+}(\lambda_H(\delta)-\lambda_H(0))/\delta\le\limsup_{\delta\to0^+}(\lambda_H(\delta)-\lambda_H(0))/\delta<1$ (the last strict inequality comes from the third inequality in~\eqref{ineqs}).

Consider now any sequence $(\varepsilon_m)_{m\in\N}$ of positive real numbers converging to $0$. The same arguments as in the proof of the continuity property in Proposition~\ref{prop monolambda} imply that, up to extraction of a subsequence, the sequence $(\Phi^{\varepsilon_m})_{m\in\N}=((\varphi_{1}^{\varepsilon_m},\ldots,\varphi_{H}^{\varepsilon_m}))_{m\in\N}$ converges weakly in $H^1(\R^n)^H$, strongly in $L^2(\R^n)^H$, and almost everywhere in $\R^n$, to a certain limit $\Phi:=(\varphi_{1},\ldots,\varphi_{H})\in(H^1(\R^n)\cap L^2_w(\R^n))^H$, which is nonnegative componentwise and satisfies $\int_{\R^n}\|\Phi(\x)\|^2\,\md\x=1$. Furthermore, each function $\varphi_i$ is a $C^\infty(\R^n)\cap H^1(\R^n)\cap L^2_w(\R^n)$ nonnegative solution of
$$-\frac{\mu^2}{2}\Delta\varphi_i(\x)-r_i(\x)\,\varphi_i(\x)=\lambda_1\,\varphi_i(\x)\ \hbox{ in }\R^n.$$
Therefore, by uniqueness of the principal eigenfunction for this equation, it follows that there is $p_i\in[0,+\infty)$ such that
$$\varphi_i(\x)=p_i\,G_i(\x)=p_i\,G(\x-\opt_i)\ \hbox{ for all $\x\in\R^n$},$$
hence
\begin{equation}\label{gaussianvector}
(\varphi_{1}^{\varepsilon_m},\ldots,\varphi_{H}^{\varepsilon_m})\to(p_1\,G_1,\ldots,p_H\,G_H)\ \hbox{ in $L^2(\R^n)^H$ as $m\to+\infty$}.
\end{equation}Since $\Phi$ is normalised in $L^2(\R^n)^H$, one also has
$$p_1^2+\cdots+p_H^2=1.$$
Now, consider any $\qg=(q_1,\ldots,q_H)\in\R^H$ with $\|\qg\|=1$, and call $\Psi:=(q_1G_1,\ldots,q_HG_H)\in(H^1(\R^n)\cap L^2_w(\R^n))^H$. Notice that $\int_{\R^n}\|\Psi(\x)\|^2\,\md\x=1$. Observe also that each $\varphi^{\varepsilon_m}_{H,i}$ belongs to $H^1(\R^n)\cap L^2_w(\R^n)$, hence
$$\begin{array}{l}
\ds\lambda_1\!\!\int_{\R^n}\!\!(\varphi^{\varepsilon_m}_{i}(\x))^2\md\x\vspace{3pt}\\
\qquad\ds\le\underbrace{\frac{\mu ^2}{2}\!\!\int_{\R^{n}}\!\!\|\nabla\varphi_{i}^{\varepsilon_m}(\x)\|^2\md\x\!-\!\rmax\int_{\R^n}(\varphi^{\varepsilon_m}_i(\x))^2\md\x+\frac{\alpha}{2}\!\int_{\R^{n}}\!\!\|\x-\opt_i\|^2(\varphi_{i}^{\varepsilon_m}(\x))^{2}(\x)\md\x}_{=:Q_1(\varphi^{\varepsilon_m}_{i},0)}\end{array}$$
and $\lambda_H(0)=\lambda_1\le Q_H(\Phi^{\varepsilon_m},0)$ by summing the inequalities from $i=1$ to $H$. The inequality $\lambda_H(\varepsilon_m)=Q_H(\Phi^{\varepsilon_m},\varepsilon_m)\le Q_H(\Psi,\varepsilon_m)$ and the equality $Q_H(\Psi,0)=\lambda_1=\lambda_H(0)$ then imply that
$$\frac{Q_H(\Phi^{\varepsilon_m},\varepsilon_m)-Q_H(\Phi^{\varepsilon_m},0)}{\varepsilon_m}\le\frac{\lambda_H(\varepsilon_m)-\lambda_H(0)}{\varepsilon_m}\le\frac{Q_H(\Psi,\varepsilon_m)-Q_H(\Psi,0)}{\varepsilon_m}$$
for every $m\in\N$, hence, as in~\eqref{notice2}-\eqref{notice3},
\begin{equation}\label{varepsm}\begin{array}{l}
\ds1-\!\sum_{1\le i< j\le H}\!\frac{2}{H-1}\int_{\R^n}\varphi_{i}^{\varepsilon_m}(\x)\,\varphi_{j}^{\varepsilon_m}(\x)\,\md\x\vspace{3pt}\\
\qquad\qquad\ds\le\frac{\lambda_H(\varepsilon_m)-\lambda_H(0)}{\varepsilon_m}\le1-\!\sum_{1\le i< j\le H}\!\frac{2}{H-1}\int_{\R^n}q_i\,q_j\,G_i(\x)\,G_j(\x)\,\md\x.\end{array}
\end{equation}
Using~\eqref{gaussianvector} and passing to the limit as $m\to+\infty$ in the left-hand side of the previous inequality leads to
$$\sum_{1\le i< j\le H}\int_{\R^n}q_i\,q_j\,G_i(\x)\,G_j(\x)\,\md\x\le\sum_{1\le i< j\le H}\int_{\R^n}p_i\,p_j\,G_i(\x)\,G_j(\x)\,\md\x,$$
hence
$$\sum_{1\le i, j\le H}\int_{\R^n}q_i\,q_j\,G_i(\x)\,G_j(\x)\,\md\x\le\sum_{1\le i, j\le H}\int_{\R^n}p_i\,p_j\,G_i(\x)\,G_j(\x)\,\md\x,$$
since each $G_i=G(\cdot-\opt_i)$ has unit $L^2(\R^n)$ norm and since both vectors $\pg:=(p_1,\ldots,p_H)$ and $\qg=(q_1,\ldots,q_H)$ of $\R^H$ have unit Euclidean norm. Denote $A=(a_{ij})_{1\le i,j\le H}$ the symmetric matrix whose entries are
\begin{equation}\label{entries-A}
a_{ij}:=\int_{\R^n}G_i(\x)\,G_j(\x)\,\md\x=e^{-\sqrt{\alpha}\,\|\opt_i-\opt_j\|^2/(4\mu)}>0,
\end{equation}
by~\eqref{prod-sca-gauss}. One has then obtained that $\qg A\qg\le\pg A\pg$ for all $\qg\in\R^H$ with unit Euclidean norm (namely the quadratic form associated to $A$ is maximal on the unit Euclidean sphere of~$\R^H$ at the normalised vector $\pg$). From Perron-Frobenius theorem, since $a_{ij}>0$ and~$p_i\ge0$ for all $1\le i,j\le H$, it follows that $\pg A\pg$ is the largest eigenvalue $\mu_A$ of $A$, that~$\pg$ is the unique normalised principal eigenvector of the matrix $A$, and that $p_i>0$ for all $1\le i\le H$. By uniqueness of the limit, one concludes that $\Phi^\varepsilon\to(p_1G_1,\ldots,p_HG_H)$ in~$L^2(\R^n)^H$ as~$\varepsilon\to0^+$. Furthermore, by using $\qg=\pg$ and $\varepsilon$ instead of $\varepsilon_m$ in~\eqref{varepsm}, and by passing to the limit as~$\varepsilon\to0^+$, one gets that $\delta\mapsto\lambda_H(\delta)$ is differentiable at $0$, with
\begin{equation}\label{deflambda'}
\lambda_H'(0)=1-\!\sum_{1\le i< j\le H}\!\frac{2}{H-1}\int_{\R^n}p_i\,p_j\,G_i(\x)\,G_j(\x)\,\md\x=1-\!\frac{1}{H-1}\sum_{1\le i\neq j\le H}\!p_ip_ja_{ij}.
\end{equation}
But $\sum_{1\le i\neq j\le H}\!p_ip_ja_{ij}=\pg A\pg-\sum_{1\le i\le H}p_i^2=\mu_A-1$. Finally,
\begin{equation}\label{deflambda'2}
\lambda_H'(0)=1-\frac{\mu_A-1}{H-1}=\frac{H-\mu_A}{H-1}.
\end{equation}

Observe that~\eqref{deflambda'} confirms that $\lambda'_H(0)<1$ (since $a_{ij}>0$ and $p_i>0$ for all $1\le i,j\le H$). Furthermore, if the optima $\opt_i$ are all identical, then we already know that $\lambda'_H(\delta)=0$ for all $\delta>0$, hence the function $\delta\mapsto\lambda_H(\delta)$, which is continuous in~$[0,+\infty)$ is constant in~$[0,+\infty)$, thus $\lambda'_H(0)=0$ too and $\lambda_H(\delta)=\lambda_H(0)=\lambda_1=\mu\,n\,\sqrt{\alpha}/2-\rmax$ for all $\delta\ge0$. On the other hand, if the optima $\opt_i$ are not all identical, then the functions $G_i$ are not all identical and there are $1\le i_0<j_0\le H$ such that $G_{i_0}\not\equiv G_{j_0}$, hence $p_{i_0}\,G_{i_0}\not\equiv p_{j_0}\,G_{j_0}$ (remember that $p_i>0$ for all $1\le i\le H$ and that the functions $G_i$ have all unit $L^2(\R^n)$ norm) and
$$\int_{\R^n}p_{i_0}\,p_{j_0}\,G_{i_0}(\x)\,G_{j_0}(\x)\,\md\x<\frac12\int_{\R^n}(p_{i_0}\,G_{i_0}(\x))^2\,\md\x+\frac12\int_{\R^n}(p_{j_0}\,G_{j_0}(\x))^2\,\md\x=\frac{p_{i_0}^2+p_{j_0}^2}{2},$$
while the large inequality holds for all $1\le i,j\le H$. Therefore,~\eqref{deflambda'} and the equality $p_1^2+\cdots+p_H^2=1$ imply that $\lambda'_H(0)>0$.

Lastly, with $\opt_1,\opt_2$ as in~\eqref{O1-O2}, formula~\eqref{deflambda'2} implies that
$$\lambda_2'(0)=1-e^{-\sqrt{\alpha}\beta^2/\mu}$$
and
$$\lambda'_3(0)=1-\sqrt{\frac{a_{12}^2+a_{13}^2+a_{23}^2}{3}}\times\cos\left[\frac13\,\arccos\left(a_{12}a_{13}a_{23}\sqrt{\frac{27}{(a_{12}^2+a_{13}^2+a_{23}^2)^3}}\right)\right],$$
from the del Ferro-Cardan formula for the roots of the characteristic polynomial of the matrix $A$ (notice that the argument inside the $\arccos$ ranges in $(0,1]$ from the positivity of the $a_{ij}$'s and the arithmetic-geometric mean inequality). After some algebraic transformations based on \eqref{entries-A}, one infers that $\mathcal{P}:=\{\opt_3=(a_1,\ldots,a_n)\in\R^n:\lambda'_3(0)<\lambda'_2(0)\}$ is given by formula~\eqref{patatoide}, and that $\{\opt_3=(a_1,\ldots,a_n)\in\R^n:\lambda'_3(0)>\lambda'_2(0)\}=\R^n\setminus\overline{\mathcal{P}}$. The last conclusion of Proposition~\ref{prop:fitness-gain-small-delta} immediately follows, and the proof is thereby complete.
\end{proof}

\begin{proof}[Proof of Proposition~$\ref{prop equilateral}$] Denote $\lambda_3^e=\lambda_3(\opt _1,\opt _2, \opt _3)$, with $\opt_1=(-\beta,0,\ldots,0)$, $\opt_2=(\beta,0,\ldots,0)$, $\opt _3=(0,\sqrt 3 \beta,0, \ldots,0)$, and $\beta>0$. We first show some symmetry properties. Let $(\varphi_1,\varphi_2,\varphi_3)$ be the normalised principal eigenvector associated with the principal eigenvalue $\lambda_3^e$. Define the symmetry
\begin{equation}\label{defiota}
\iota(\x)=\iota(x_1, x_2,\ldots,x_n):=(-x_1, x_2,\ldots,x_n).
\end{equation}
As $r_2\circ \iota=r_1$ and  $r_3 \circ\iota=r_3$, we get that $(\varphi_2 \circ \iota,\varphi_1 \circ \iota,\varphi_3 \circ \iota)$ is also a normalised positive eigenvector. By uniqueness, it follows that
$$\varphi_2=\varphi_1 \circ \iota\  \hbox{ and }\ \varphi_3=\varphi_3 \circ \iota.$$
Now, define
$$\kappa(\x)=\kappa(x_1, x_2,\ldots,x_n):=\lp \frac{1}{2}(x_1-\beta)+\frac{\sqrt{3}}{2}x_2,\frac{\sqrt{3}}{2}(x_1+\beta)-\frac{1}{2}x_2 ,x_3,\ldots,x_n\rp,$$
which is the orthogonal affine reflection with respect to the hyperplane parallel to the directions $x_3,\ldots,x_n$ and containing the line joining $\opt_1$ and $(\opt_2+\opt_3)/2$. We have
$$r_1\circ \kappa=r_1, \ r_2\circ \kappa=r_3, \ r_3\circ \kappa=r_2.$$
Additionally, one has $\Delta (\varphi \circ \kappa)(\x)=(\Delta\varphi)(\kappa(\x))$ for all $\varphi\in C^2(\R^n)$ and $\x\in\R^n$. Thus, $(\varphi_1\circ \kappa, \varphi_3\circ \kappa, \varphi_2\circ \kappa)$ is also a normalised positive eigenvector. Again, by uniqueness, one infers that
$$\varphi_1=\varphi_1 \circ \kappa, \  \varphi_2=\varphi_3 \circ \kappa, \hbox{ and }\varphi_3=\varphi_2 \circ \kappa.$$

Consider now the equation satisfied by $\varphi_1$, namely
\begin{equation}\label{eq:phi1_H3}
-\frac{\mu^2}{2}\Delta \varphi_1(\x) - r_1(\x) \varphi_1(\x)- \delta \left(   \frac {\varphi_2(\x)+\varphi_3(\x)}{2} -   \varphi_1(\x) \right) = \lambda_3^e\,\varphi_1(\x).
\end{equation}
Multiplying by $\varphi_1$ and integrating by parts, we get
$$\begin{array}{rcl}
\ds\int_{\R^n}\left(\frac{\mu^2}{2}\|\nabla\varphi_1(\x)\|^2-r_1(\x)\,(\varphi_1(\x))^2\right)\md\x+ \delta \, \int_{\R^n} (\varphi_1(\x))^2\,\md\x & & \vspace{3pt}\\
\ds- \frac{\delta}{2}  \, \int_{\R^n} \varphi_1(\x)\, \varphi_2(\x)\,\md\x- \frac{\delta}{2}  \, \int_{\R^n} \varphi_1(\x)\,\varphi_3(\x)\,\md\x  & = & \ds\lambda_3^e \int_{\R^n}(\varphi_1(\x))^2\,\md\x.\end{array}$$
Moreover,
$$\int_{\R^n} \varphi_1(\x)\, \varphi_3(\x)\,\md\x=\int_{\R^n} (\varphi_1\circ\kappa )(\x)\, (\varphi_2\circ\kappa)(\x)\,\md\x=\int_{\R^n} \varphi_1(\x)\, \varphi_2(\x)\,\md\x.$$
Finally, we have
\begin{equation}\label{eq:phi1_iota}\begin{array}{rcl}
\ds\int_{\R^n}\!\!\!\left(\frac{\mu^2}{2}\|\nabla \varphi_1(\x)\|^2 - r_1(\x)(\varphi_1(\x))^2 \right)\md\x+ \delta\! \int_{\R^n}\!(\varphi_1(\x))^2 \,\md\x & \!\!\!\! & \vspace{3pt}\\
\ds- \delta\!\int_{\R^n} \varphi_1(\x)\, \varphi_2 (\x) \,\md\x & \!\!=\!\! & \ds\lambda_3^e\!\int_{\R^n}\!(\varphi_1(\x))^2\, \md\x,\end{array}
\end{equation}
and, by symmetry (using a composition with the function $\iota$),
\begin{equation}\label{eq:phi2_iota}\begin{array}{rcl}
\ds\int_{\R^n}\!\!\!\left(\frac{\mu^2}{2}\|\nabla \varphi_2(\x)\|^2\!-\!r_2(\x)(\varphi_2(\x))^2\right)\,\md\x\!+\!\delta\! \int_{\R^n}\!(\varphi_2(\x))^2\,\md\x & \!\!\!\! & \vspace{3pt}\\
\ds- \delta\,\int_{\R^n} \varphi_1(\x)\,\varphi_2(\x)\,\md\x  & \!\!=\!\! & \ds\lambda_3^e\int_{\R^n}(\varphi_2(\x))^2\, \md\x.\end{array}
\end{equation}
Adding \eqref{eq:phi1_iota} and \eqref{eq:phi2_iota}, we observe that
\begin{equation}\label{Q2}
Q_2\lp \frac{\varphi_1}{\sqrt{\int_{\R^n} (\varphi_1^2+\varphi_2^2)(\x) \,\md\x} }, \frac{\varphi_2}{\sqrt{\int_{\R^n} (\varphi_1^2+\varphi_2^2)(\x) \,\md\x}}\rp= \lambda_3^e,
\end{equation}
where $Q_2$ is the Rayleigh quotient defined by \eqref{eq:rayleigh_Q} with $H=2$. Since $(\varphi_1,\varphi_2)\in(H^1(\R^n)\cap L^2_w(\R^n))^2$, it follows from~\eqref{Rayleigh} that $\lambda_3^e \ge \lambda_2(\opt_1,\opt_2)$.

Finally, if $\lambda_3^e$ were equal to $\lambda_2(\opt_1,\opt_2)$, since the principal eigenvector associated with the principal eigenvalue $\lambda_2(\opt_1,\opt_2)$ with $H=2$ is the unique positive normalised minimum of $Q_2$ in $(H^1(\R^n)\cap L^2_w(\R^n))^2$, it would follow from~\eqref{Q2} that $(\varphi_1,\varphi_2)$ would be (up to normalisation) a principal eigenvector of~\eqref{eq:eigenvalue_pb} with $H=2$. Together with~\eqref{eq:phi1_H3} and using the positivity of $\delta$, one would infer that $\varphi_3\equiv\varphi_2$ in $\R^n$. Similarly, by using the equation similar to~\eqref{eq:phi1_H3} satisfied by $\varphi_2$, one would get that $\varphi_3\equiv\varphi_1$ in $\R^n$. Finally, $\varphi_1\equiv\varphi_2\equiv\varphi_3$ in $\R^n$ and~\eqref{eq:phi1_H3} implies that $\varphi_1$ would be a positive multiple of the normalised principal eigenfunction $G_1=G(\cdot-\opt_1)$ of the case of only one host. But, similarly, $\varphi_2$ would be a positive multiple of $G_2=G(\cdot-\opt_2)$. This leads to a contradiction, since $\opt_1\neq\opt_2$ (because $\beta>0$). Therefore, $\lambda_3(\opt_1,\opt_2,\opt_3)=\lambda_3^e>\lambda_2(\opt_1,\opt_2)$ and the proof of Proposition~\ref{prop equilateral} is thereby complete.
\end{proof}

\begin{proof}[Proof of Proposition~$\ref{prop:byebyeO3}$]
With the notation~\eqref{eq:eigenvalue_pb_loss}, evaluating the Rayleigh quotient asso\-ciated with $\lambda_3=\lambda_3(\opt_1,\opt_2,\opt_3)$ at $\tilde\Phi:=(\tphi_1,\tphi_2,0)$, we get
\begin{equation}\label{eq:lambdatilde2a}
\lambda_3\leq Q_3(\tilde\Phi)=\tilde{\lambda}_2=\tilde{\lambda}_2(\opt_1,\opt_2).
\end{equation}
On the other hand, as the minimum in the Rayleigh quotient $Q_3$ is reached at $\Phi=(\varphi_1,\varphi_2,\varphi_3)$, the normalised eigenvector associated with $\lambda_3$, we have
$$\begin{array}{rcl}
\lambda_3  & \!\!\ge\!\! & \ds\min_{\substack{(\psi_1,\psi_2)\in\mathcal{E}^2 \\ \int_{\R^n}\psi_1^2+\psi_2^2=1-\int_{\R^n}\varphi_3^2}} \left\{ \frac{\mu ^2}{2}\int_{\R^{n}}\|\nabla \psi_1\|^2+\|\nabla \psi_2\|^2- \int_{\R^{n}}(r_1 \, \psi_1^2+r_2 \, \psi_2^2) -\delta \int_{\R^{n}} \psi_1 \, \psi_2  \right\}\\
& \!\!\!\! & \ds+\min_{\substack{\psi\in\mathcal{E} \\ \int_{\R^n}\psi^2=\int_{\R^n}\varphi_3^2}} \left\{ \frac{\mu ^2}{2}\int_{\R^{n}}\|\nabla\psi\|^2- \int_{\R^{n}} r_3 \, \psi ^2 \right\}+\delta-\delta \int_{\R^{n}} \varphi_3 \, (\varphi_1+\varphi_2),\end{array}$$
where $\mathcal{E}=H^1(\R^n)\cap L^2_w(\R^n)$. As the first minimum in the above formula is precisely $(\tilde{\lambda}_2-\delta)\lp1-\int_{\R^n}\varphi_3^2 \rp,$ and the second minimum is $\lambda_1 \, \int_{\R^n}\varphi_3^2$, we get
\begin{equation}\label{eq:za}
\lambda_3\ge  (\tilde{\lambda}_2-\delta)\lp1-\int_{\R^n}\varphi_3^2 \rp + \lambda_1 \, \int_{\R^n}\varphi_3^2    +\delta-\delta \int_{\R^{n}} \varphi_3 \, (\varphi_1+\varphi_2).
\end{equation}

Our goal is now to show that $\int_{\R^{n}} \varphi_3 \, (\varphi_1+\varphi_2)$ becomes small as $\|\opt_3\|\to +\infty$. Since $\lambda_3=Q_3(\Phi),$ we have
\begin{equation*}
\lambda_3 > -\sum_{i=1}^3 \int_{\R^n}r_i\varphi_i^2+ \delta - \delta \, \sum_{1\le i<j\le3}\int_{\R^n} \varphi_i \, \varphi_j\geq -\sum_{i=1}^3 \int_{\R^n}r_i\varphi_i^2
\end{equation*}
since $\int_{\R^n}\|\Phi\|^2=1$. Using \eqref{eq:lambdatilde2a} and  the definition \eqref{def ri} of $r_i(\x)$, we get
\begin{equation*}
\alpha\, \sum_{i=1}^3\int_{\R^{n}}\frac{ \norm{\x-\opt_i}^2}{2} \, (\varphi_i(\x))^2\,\md\x< \tilde{\lambda}_2+\rmax.
\end{equation*}
This implies that, for any radius $R>0$, and $i=1,2,3$,
\begin{equation*}
\|\varphi_i\|^2_{L^2(\R^n\backslash B(\opt_i,R))}<\frac{2\,(\tilde{\lambda}_2+\rmax)}{\alpha \, R^2}.
\end{equation*}
Next, we have, for $i=1,2$,
\begin{align*}
\int_{\R^{n}} \varphi_3 \, \varphi_i & = \int_{ B(\opt_i,R)} \varphi_3 \, \varphi_i + \int_{\R^{n}\backslash B(\opt_i,R)} \varphi_3 \, \varphi_i \\
& \le \|\varphi_3\|_{L^2(B(\opt_i,R))}  \|\varphi_i\|_{L^2(B(\opt_i,R))}+ \|\varphi_3\|_{L^2(\R^n\backslash B(\opt_i,R))}\|\varphi_i\|_{L^2(\R^n\backslash B(\opt_i,R))}.
\end{align*}
Taking $R=\min(\norm{\opt_3-\opt_1}/2,\norm{\opt_3-\opt_2}/2)$, we have $B(\opt_i,R)\subset \R^n\backslash B(\opt_3,R)$ for $i=1,2$. Thus, we get
\begin{align}
\int_{\R^{n}} \varphi_3 \, (\varphi_1+\varphi_2)\le &  \ \|\varphi_3\|_{L^2(\R^n\backslash B(\opt_3,R))}(\|\varphi_1\|_{L^2(\R^n)}+\|\varphi_2\|_{L^2(\R^n)}) \nonumber \\
& \ + \|\varphi_3\|_{L^2(\R^n)}\,(\|\varphi_1\|_{L^2(\R^n\backslash B(\opt_1,R))}+\|\varphi_2\|_{L^2(\R^n\backslash B(\opt_2,R))})\nonumber\\
\le & \ \sqrt{\frac{2(\tilde{\lambda}_2+\rmax)}{\alpha \, R^2}}\,(\|\varphi_1\|_{L^2(\R^n)}+\|\varphi_2\|_{L^2(\R^n)}+2 \|\varphi_3\|_{L^2(\R^n)}) \label{eq:ineg_R}\\
\le & \ \sqrt{\frac{12(\tilde{\lambda}_2+\rmax)}{\alpha \, R^2}},\nonumber
\end{align}
where the last inequality uses $\int_{\R^n}\|\Phi\|^2=\int_{\R^n}\varphi_1^2+\int_{\R^n}\varphi_2^2+\int_{\R^n}\varphi_3^2=1$ together with the Cauchy-Schwarz inequality $a+b+2c\le\sqrt{6(a^2+b^2+c^2)}$ for any $(a,b,c)\in\R^3$.

We come back to \eqref{eq:za}. Since $\int_{\R^n}\varphi_3^2 \in (0,1)$, we have
\begin{equation*}
\lambda_3\ge \inf_{a\in (0,1)}\{ (\tilde{\lambda}_2-\delta)\lp1-a \rp+ \lambda_1 \, a   \} +\delta-\delta \int_{\R^{n}} \varphi_3 \, (\varphi_1+\varphi_2).
\end{equation*}
Using \eqref{eq:ineglambda2tilde_lambda1}, namely $\tilde{\lambda}_2-\delta\le \lambda_1$, we get
$$\lambda_3\ge \tilde{\lambda}_2-\delta \int_{\R^{n}} \varphi_3 \, (\varphi_1+\varphi_2).$$
Finally, we recall that $R=\min(\norm{\opt_3-\opt_1}/2,\norm{\opt_3-\opt_2}/2)$ in \eqref{eq:ineg_R} and, together with~\eqref{eq:lambdatilde2a}, we get~\eqref{loin}. The last conclusion of Proposition~\ref{prop:byebyeO3} then follows from~\eqref{eq:ineglambda2tilde_lambda1}.
\end{proof}

\begin{proof}[Proof of Proposition~$\ref{prop:O3=O1}$]
We denote $(\varphi_1,\varphi_2)$ the normalised principal eigenvector associated with $\lambda_2=\lambda_2(\opt _1,\opt_2)$. By symmetry we have $\varphi_1=\varphi_2\circ\iota$, with $\iota$ as in~\eqref{defiota}, hence $\int _{\R^n}\varphi _1^2=\int _{\R^n} \varphi _2 ^2=1/2$. Next, we take $\opt_3\in B(\opt_2,\rho)$ for some $\rho>0$ (the case $\opt_3\in B(\opt_1,\rho)$ can be handled similarly), we call $\lambda_3=\lambda_3(\opt _1,\opt_2,\opt_3)$, and we test the Rayleigh quotient $Q_3$ at $\Phi:=(a\varphi_1,a\varphi _2,b\varphi_2)$ with
$$a^2+\frac{b^2}{2}=1,$$
so that $\Phi$ is normalised. We obtain after some straightforward computations
\begin{eqnarray*}
\lambda_3\le Q_3(\Phi)&=&a^2\lambda_2+b^2\left(\frac {\mu ^2}2\int _{\R^n}\|\nabla \varphi _2\|^2-r_2\varphi_2 ^2\right)+b^2\int _{\R^n}(r_2-r_3)\varphi_2^2\\
&&\qquad+\delta\left(\int _{\R^n} b^2\varphi_2^2+a^2\varphi_1\varphi_2-ab\varphi_1\varphi_2-ab\varphi_2^2\right)\\
&=&a^2\lambda_2+b^2\left(\lambda_2 \int  _{\R^n} \varphi _2^2+\delta \int _{\R^n} (\varphi_1-\varphi_2)\varphi _2\right)+b^2\int _{\R^n}(r_2-r_3)\varphi_2^2\\
&&\qquad+\delta\left(\int _{\R^n} b^2\varphi_2^2+a^2\varphi_1\varphi_2-ab\varphi_1\varphi_2-ab\varphi_2^2\right)\\
&=& \lambda _2+\delta\left((b^2+a^2-ab)\int _{\R^n} \varphi_1\varphi _2 -ab \int _{\R^n} \varphi _2^2\right)+b^2\int _{\R^n}(r_2-r_3)\varphi_2^2.
\end{eqnarray*}
We now select $a=b=\sqrt{2/3}$ and thus obtain
$$\lambda_3-\lambda_2\leq\frac{2\delta}{3}\left(\int_{\R^n}\varphi_1\varphi_2-\int_{\R^n}\varphi_2^2\right)+\frac 23\int_{\R^n}(r_2-r_3)\varphi_2^2.$$
Since the Cauchy-Schwarz inequality yields $\int_{\R^n}\varphi_1\varphi_2-\int_{\R^n}\varphi_2^2<0$ ($\varphi_1$ and $\varphi_2$ are not colinear because $\beta \neq 0$) and since the dominated convergence theorem implies that $\int_{\R^n}(r_2-r_3)\varphi_2^2\to 0$ as $\rho\to 0$, we have $\lambda_3-\lambda_2<0$ if $\rho>0$ is small enough.
\end{proof}

\begin{proof}[Proof of Proposition~$\ref{prop:hmid}$] First, from Proposition~\ref{prop bound lambda} and Corollary~\ref{cor bound lambda bis}, we know that, whatever the position of $\opt_3$, we have
\begin{equation}\label{ineg-lambdab}
\lambda_1\le\lambda _1+\delta\left(1-\sum_{1\le i< j\le 3}\int_{\R^n}\varphi_i(\x)\varphi_j(\x)d\x\right)\leq \lambda_3 \leq \lambda _1+\delta\left(1-\frac{e^{-\sqrt{\alpha}\beta^2/\mu}}{3}\right).
\end{equation}
Set $\psi_i=\varphi_i(\cdot+\opt_i).$ We have
$$\begin{array}{rcl}
\lambda_3 & \!\!\!\!=\!\!\!\! & \!\ds-\rmax\!+\!\sum_{i=1}^3\!\int_{\R^n}\!\!\lp\!\frac{\mu^2}{2}\|\nabla \psi_i(\x)\|^2 +\frac{\alpha}{2}\,\|\x\|^2(\psi_i(\x))^2\!\rp\!\md\x\!+\!\delta\!-\!\delta\!\!\sum_{1\le i< j\le 3}\int_{\R^n}\!\!\varphi_i(\x)\varphi_j(\x)\md\x\vspace{3pt}\\
& \!\!\!\!\ge\!\!\!\! & \ds-\rmax +\sum_{i=1}^3\int_{\R^n}\!\!\lp\frac{\mu^2}{2}\|\nabla \psi_i(\x)\|^2 +\frac{\alpha}{2}\,\|\x\|^2 \, (\psi_i(\x))^2 \rp\!\md\x.\end{array}$$
Thus, from \eqref{ineg-lambdab}, we obtain that
$$\sum_{i=1}^3\int_{\R^n}\frac{\alpha}{2}\,\|\x\|^2 \,(\psi_i(\x))^2\,\md\x\le\rmax +\lambda_1+\delta\left(1-\frac{e^{-\sqrt{\alpha}\beta^2/\mu}}{3}\right).$$
Considering any radius $R>0$, the last inequality implies that
$$\|\psi_i\|_{L^2(\R^n\setminus B(\mathcal{O},R))}^2\leq\frac{2\,(\rmax +\lambda_1+\delta)}{\alpha\, R^2}$$
for every $\beta >0$ and $1\le i\le 3$.

Now, consider the particular case where $\opt_3=\Oc=(0,\ldots,0)$, so that the Euclidean distance between two optima is at least $\beta$. In this case, $\psi_3=\varphi_3$ and we have, for every $R>0$ and $\beta \ge 2 R$,
\begin{align}
\int_{\R^n}\varphi_1(\x)\,\varphi_3(\x)\,\md\x & = \int_{\R^n}\psi_1(\x-\opt_1)\,\psi_3(\x)\,\md\x  \nonumber\\
& =  \displaystyle\int_{B(\mathcal O,R)}\psi_1(\x-\opt_1)\,\psi_3(\x)\,\md\x+ \int_{\R^n\setminus B(\mathcal O,R)}\psi_1(\x-\opt_1)\,\psi_3(\x)\,\md\x \nonumber\\
& \le \|\psi_1\|_{L^2(B(\opt_1,R))}  \|\psi_3\|_{L^2(\R^n)}+ \|\psi_1\|_{L^2(\R^n)} \, \|\psi_3\|_{L^2(\R^n\setminus B(\mathcal O,R))} \nonumber\\
& \le \|\psi_1\|_{L^2(\R^n\setminus B(\mathcal O,R))}  \|\psi_3\|_{L^2(\R^n)}+ \|\psi_1\|_{L^2(\R^n)} \, \|\psi_3\|_{L^2(\R^n\setminus B(\mathcal O,R))} \nonumber\\
& \le\|\psi_1\|_{L^2(\R^n\setminus B(\mathcal O,R))}+\|\psi_3\|_{L^2(\R^n\setminus B(\mathcal O,R))} \nonumber
\end{align}
since $B(\opt_1,R)\subset \R^n\setminus B(\mathcal O,R)$ and $\|\psi_i\|_{L^2(\R^n)}=\|\varphi_i\|_{L^2(\R^n)}\le1$ for each $1\le i\le 3$. As a result
$$\int_{\R^n}\varphi_1(\x)\varphi_3(\x)d\x \leq\frac{\sqrt{8\,(\rmax +\lambda_1+\delta)}}{R \, \sqrt{\alpha}}.$$
Applying the same arguments, we finally get
$$\sum_{1\le i< j\le 3}\int_{\R^n}\varphi_i(\x)\,\varphi_j(\x)\,\md\x\le\frac{\sqrt{72\,(\rmax +\lambda_1+\delta)}}{R \, \sqrt{\alpha}}.$$
Together with \eqref{ineg-lambdab}, this shows that
$$\lambda_3(\opt_1,\opt_2,\Oc)\to \lambda_1+\delta \hbox{ as } \beta \to +\infty.$$

Consider now the case $\opt_3=\opt_1$. We note that $\lambda_3(\opt_1,\opt_2,\opt_1)=\lambda_3(\opt_1,\opt_1,\opt_2)$. Then, using the result of Proposition~\ref{prop:byebyeO3}, we obtain that $\lambda_3(\opt_1,\opt_1,\opt_2)\to \tilde{\lambda}_2(\opt_1,\opt_1)$ as $\beta\to + \infty$, where $\tilde{\lambda}_2(\opt_1,\opt_1)$ is defined by \eqref{eq:eigenvalue_pb_loss}, in the particular case where the two optima are at the same position. In such case, $\tphi_1=\tphi_2$ by uniqueness and therefore, from \eqref{eq:eigenvalue_pb_loss},
$$-\frac{\mu^2}{2}\Delta \tphi_1- r_1(\x) \tphi_1+\frac{\delta}{2}\,\tphi_1=\tilde{\lambda}_2(\opt_1,\opt_1)\, \tphi_1,$$
so that $\tilde{\lambda}_2(\opt_1,\opt_1)=\lambda_1+\delta/2$, and we are done.
\end{proof}

\begin{remark}{\rm The conclusion $\lim_{\beta\to+\infty}\lambda_3(\opt_1,\opt_2,\opt_1)=\lambda_1+\delta/2$ of Proposition~$\ref{prop:hmid}$ looks at first glance similar to the conclusion~\eqref{conclusion} derived in the proof of Proposition~$\ref{prop:statio_states}$~$(iii)$. However, the arguments used in both proofs are not the same, since the proof of Proposition~$\ref{prop:hmid}$ mainly relies on $L^2$ estimates, while that of Proposition~$\ref{prop:statio_states}$~$(iii)$ is based on $L^1$ estimates, with a reasoning done by contradiction and actually leading to the contradicting conclusions~\eqref{lambdainfty} and~\eqref{conclusion}. Notice also that the system~\eqref{eq:eigenvalue_pb_loss} used in the proof of Proposition~$\ref{prop:hmid}$ is symmetric in $(\tilde{\varphi}_1,\tilde{\varphi}_2)$, whereas the system~\eqref{eq:phi1phi2} derived in the argument by contradiction in the proof of Proposition~$\ref{prop:statio_states}$~$(iii)$ is not symmetric in $(\varphi_1,\varphi_2)$, complicating its analysis.}
\end{remark}

\begin{proof}[Proof of Proposition~$\ref{prop:projection}$]  We need to recall the notion of Steiner symmetrisation of a function with respect to the variable $x_k$. Consider first a measurable function $h:\R\to\R$, $x\mapsto h(x)$, which is either nonnegative and belongs to $L^p(\R)$ for some $1\le p<\infty$, or which is such that $h(x)\to\inf_{\R}h\in[-\infty,+\infty)$ as $|x|\to+\infty$. Then there exists a unique (in the class of functions which are identical almost everywhere)  function $h^\sharp:\R\to\R$, $x\mapsto h^\sharp(x)$, such that: $(i)$~$h^\sharp$ is symmetric with respect to $x=0$ and nonincreasing in $[0,+\infty)$, i.e., for all $x,y\in \R$, $h^\sharp(x) \ge h^\sharp(y)$ if $|x|\le|y|$; $(ii)$~$h^\sharp$ has the same distribution function as $h$, that~is,
$$\hbox{meas }\{x\in \R:  h^\sharp(x)>\alpha\}=\hbox{meas }\{x\in\R : h(x)>\alpha\}$$
for all $\alpha\in \R$, where meas denotes the one-dimensional Lebesgue measure.

Consider now a measurable function $h:\R^n\to\R$, $\x\mapsto h(\x)$, which is either nonnegative and belongs to $L^p(\R^n)$ for some $1\le p<\infty$, or which is such that $h(\x)\to\inf_{\R^n}h\in[-\infty,+\infty)$ as $\|\x\|\to+\infty$. For $1\leq k \leq n$ and $(x_1,\ldots,x_{k-1},x_{k+1},\ldots,x_n)\in\R^{n-1}$, the measurable function $x_k\mapsto h(x_1,\ldots,x_k,\ldots,x_n)$ is either nonnegative and belongs to $L^p(\R)$ (for almost every $(x_1,\ldots,x_{k-1},x_{k+1},\ldots,x_n)\in\R^{n-1}$), or is such that $h(x_1,\ldots,x_k,\ldots,x_n)\to\inf_{\R^n}h=\inf_{\R}h(x_1,\ldots,x_{k-1},\cdot,x_{k+1},\ldots,x_n)\in[-\infty,+\infty)$ as~$|x_k|\to+\infty$. We can then rearrange the function $x_k\mapsto h(x_1,\ldots,x_k,\ldots,x_n)$ as above. This corresponds to the Steiner rearrangement of $h$ with respect to the variable~$x_k$, and we denote it by $h^{\sharp_k}:\R^n\to\R$, $\x\mapsto h^{\sharp_k}(\x)$.

In the sequel, we use some known properties of this rearrangement, which we briefly recall. Firstly, for any nonnegative function $h\in L^p(\R^n)$ with $1\le p<\infty$, the nonnegative function $h^{\sharp_k}$ belongs to~$L^p(\R^n)$ too, and
\begin{equation}\label{eq:Steiner_norm}
\int_{\R^n} h^p= \int_{\R^n}(h^{\sharp_k})^p.
\end{equation}
Secondly, for any nonnegative $L^2(\R^n)$ functions $h$ and $j$, the Hardy-Littlewood inequality asserts that
\begin{equation}\label{eq:Steiner_HLP}
0\le\int_{\R^n} h \, j \le  \int_{\R^n} h^{\sharp_k}\, j^{\sharp_k}.
\end{equation}
Thirdly, the P\'olya-Szeg\"o inequality says that, for any nonnegative function $h\in W^{1,p}(\R^n)$ with $1\le p<\infty$, the function $h^{\sharp_k}$ belongs to~$W^{1,p}(\R^n)$ too, and
\begin{equation}\label{eq:Steiner_PS}
\int_{\R^n}\|\nabla h\|^p\geq  \int_{\R^n}\|\nabla h^{\sharp_k}\|^p.
\end{equation}

Equipped with this, we are in position to prove the result in Proposition~\ref{prop:projection}. For the proof, without loss of generality; using the notation~\eqref{O1-O2} and the fact that the map $\opt_3\mapsto\lambda_3(\opt_1,\opt_2,\opt_3)$ is invariant by rotation around the axis $\R\times\{0\}^{n-1}$, we can assume that $\opt_3\in\R^2\times\{0\}^{n-2}$. Let $(\varphi_1,\varphi_2,\varphi_3)\in(H^1(\R^n)\cap L^2_w(\R^n))^3$ be the normalised positive eigenvector associated with $\lambda_3=\lambda_3(\opt_1,\opt_2,\opt_3)$. Remember from~\eqref{Rayleigh} that
\begin{equation} \label{eq:lambda3_beforeprojection}
\lambda_3= \sum_{i=1}^3\int_{\R^n}\lp\frac{\mu^2}{2}\|\nabla \varphi_i\|^2 -   r_i \, \varphi_i^2 \rp +\delta - \delta \int_{\R^n} (\varphi_1 \, \varphi_2 + \varphi_2\, \varphi_3 +\varphi_1 \, \varphi_3).
\end{equation}
Consider $(\varphi_1^{\sharp_2},\varphi_2^{\sharp_2},\varphi_3^{\sharp_2})$ the Steiner symmetrisations of the eigenfunctions with respect to the variable~$x_2$. From~\eqref{eq:Steiner_norm}, we know that
\begin{equation}\label{eq:steiner_sumnorm}
\int_{\R^n}(\varphi_1^{\sharp_2})^2+(\varphi_2^{\sharp_2})^2+(\varphi_3^{\sharp_2})^2= \int_{\R^n}\varphi_1^2+\varphi_2^2+\varphi_3^2=1.
\end{equation}
Additionally, from \eqref{eq:Steiner_HLP}, we have
\begin{equation} \label{eq:HLP_2}
\int_{\R^n} (\varphi_1 \, \varphi_2 + \varphi_2\, \varphi_3 +\varphi_1 \, \varphi_3) \le  \int_{\R^n} (\varphi_1^{\sharp_2} \, \varphi_2^{\sharp_2} + \varphi_2^{\sharp_2}\, \varphi_3^{\sharp_2} +\varphi_1^{\sharp_2} \, \varphi_3^{\sharp_2}).
\end{equation}

Now, we have to prove that, for each $i=1,2,3$, the function $\varphi_i^{\sharp_2}$ belongs to $H^1(\R^n)\cap L^2_w(\R^n)$. First of all, it belongs to $H^1(\R^n)$, from~\eqref{eq:Steiner_norm} and~\eqref{eq:Steiner_PS}. Let us now show that the integral $\int_{\R^n}\|\x\|^2\,(\varphi_i^{\sharp_2}(\x))^2\,\md\x$ converges. To do so, for any $R>0$, call $b_R$ the nonnegative~$L^2(\R^n)$ function defined by $b_R(\x):=\max(R^2-\|\x\|^2,0)$ and observe that
$$\begin{array}{rcl}
\ds\int_{\R^n}\min(\|\x\|^2,R^2)\,(\varphi_i(\x))^2\,\md\x & = & \ds\int_{\R^n}R^2\,(\varphi_i(\x))^2\,\md\x-\int_{\R^n}b_R(\x)\,(\varphi_i(\x))^2\,\md\x\vspace{3pt}\\
& \ge & \ds\int_{\R^n}R^2\,(\varphi_i^{\sharp_2}(\x))^2\,\md\x-\int_{\R^n}b_R^{\sharp_2}(\x)\,(\varphi_i^{\sharp_2}(\x))^2\,\md\x\end{array}$$
from~\eqref{eq:Steiner_norm}-\eqref{eq:Steiner_HLP} (we here use the fact that $\varphi_i^2$ belongs to $L^2(\R^n)$, since it is continuous and decays to $0$ faster than exponentially as $\|\x\|\to+\infty$). Since $b_R^{\sharp_2}=b_R$, one then gets that
$$\begin{array}{rcl}
\ds\int_{\R^n}\|\x\|^2\,(\varphi_i(\x))^2\,\md\x & \!\!\!\ge\!\!\! & \ds\int_{\R^n}\min(\|\x\|^2,R^2)\,(\varphi_i(\x))^2\,\md\x\vspace{3pt}\\
& \!\!\!\ge\!\!\! & \ds\int_{\R^n}(R^2-b_R(\x))\,(\varphi_i^{\sharp_2}(\x))^2\,\md\x=\int_{\R^n}\min(\|\x\|^2,R^2)\,(\varphi_i^{\sharp_2}(\x))^2\,\md\x.\end{array}$$
The monotone convergence theorem then yields the convergence of $\int_{\R^n}\|\x\|^2\,(\varphi_i^{\sharp_2}(\x))^2\,\md\x$. Thus, $\varphi_i^{\sharp_2}\in H^1(\R^n)\cap L^2_w(\R^n)$, for each $i=1,2,3$.

Next, we claim that, for each $i=1,2,3$, the Hardy-Littlewood inequality can be applied to the couple $(r_i,\varphi_i^2)$, although $r_i$ does not have a constant sign in $\R^n$, namely we claim that
\begin{equation} \label{eq:ineg_phi_mi}
\int_{\R^n} r_i \, \varphi_i^2 \le \int_{\R^n} r_i^{\sharp_2} \, (\varphi_i^2)^{\sharp_2}= \int_{\R^n} r_i^{\sharp_2} \, (\varphi_i^{\sharp_2})^2.
\end{equation}
Notice first that all integrals in~\eqref{eq:ineg_phi_mi} converge since the functions $\varphi_i$ and $\varphi_i^{\sharp_2}$ belong to $L^2(\R^n)\cap L^2_w(\R^n)$ and since $(r_1^{\sharp_2},r_2^{\sharp_2},r_3^{\sharp_2})=(r_1,r_2,r_{\opt_3^{\sharp}})$, where
$$r_{\opt_3^{\sharp}}(\x)=\rmax-\alpha\,\frac{\|\x-\opt_3^{\sharp}\|^2}{2}$$
(the equality $r_3^{\sharp_2}=r_{\opt_3^{\sharp}}$ holds because $\opt_3\in\R^2\times\{0\}^{n-2}$). To show~\eqref{eq:ineg_phi_mi}, set, for any arbitrary $K>0$, $r_i^K(\x):=r_i(\x)+K$ if $r_i(\x)+K>0$ and $r_i^K(\x):=0$ otherwise, that is, $r_i^K(\x)=\max(r_i(\x)+K,0)$. The function $r_i^K$ is continuous and compactly supported, hence it is in $L^2(\R^n)$. From~\eqref{eq:Steiner_HLP} applied to the nonnegative $L^2(\R^n)$ functions~$r_i^K$ and~$\varphi_i^2$, together with the property $(\varphi_i^2)^{\sharp_2}=(\varphi_i^{\sharp_2})^2$, we infer that
$$0\le\int_{\R^n} r_i^K\,\varphi_i^2\le\int_{\R^n} (r_i^K)^{\sharp_2}\,(\varphi_i^{\sharp_2})^2.$$
As $(r_i^K)^{\sharp_2}=\max(r_i^{\sharp_2}+K,0)$ by definition of $r_i$ in~\eqref{def ri}, we get that
\begin{equation} \label{eq:riK}
\int_{r_i+K>0} r_i \, \varphi_i^2 + K\int_{r_i+K>0}\varphi_i^2-K\int_{r_i^{\sharp_2}+K>0}(\varphi_i^{\sharp_2})^2\le  \int_{r_i^{\sharp_2}+K>0} r_i^{\sharp_2} \, (\varphi_i^{\sharp_2})^2 .
\end{equation}
It also follows from~\eqref{eq:Steiner_norm} that the functions $\varphi_i$ and $\varphi_i^{\sharp_2}$ have the same $L^2(\R^n)$ norm, hence
\begin{equation}\label{ineqK}
K\int_{r_i+K>0}\varphi_i^2-K\int_{r_i^{\sharp_2}+K>0}(\varphi_i^{\sharp_2})^2=-K\int_{r_i+K\le 0}\varphi_i^2+K\int_{r_i^{\sharp_2}+K\le0}(\varphi_i^{\sharp_2})^2.
\end{equation}
Next, we note that there are some positive constants $C$ and $K_0$ such that
$$\big\{\x\in\R^n:r_i(\x)+K\le 0\big\}\cup\big\{\x\in\R^n:r_i^{\sharp_2}(\x)+K\le 0\big\}\ \subset\ \R^n \setminus B(\mathcal{O},C\sqrt{K})$$
for all $K\ge K_0$ and $1\le i\le 3$. Thus,~\eqref{ineqK} implies that, for all $K\ge K_0$,
\begin{equation}\label{ineqK2}
\left|K\int_{r_i+K>0}\varphi_i^2-K\int_{r_i^{\sharp_2}+K>0}(\varphi_i^{\sharp_2})^2\right|\le K\, \int_{\R^n \setminus B(\mathcal{O},C\sqrt{K})} \varphi_i^2+(\varphi_i^{\sharp_2})^2.
\end{equation}
We then use the fact that the functions $x\mapsto\|\x\|\,\varphi_i(\x)$ and $x\mapsto\|\x\|\,\varphi_i^{\sharp_2}(\x)$ are in $L^2(\R^n)$ (the property for $\varphi_i^{\sharp_2}$ follows from the previous paragraph). Hence,
$$C^2\,K\int_{\R^n \setminus B(\mathcal{O},C\,\sqrt{K})}(\varphi_i(\x))^2\,\md\x\le\int_{\R^n\setminus B(\mathcal{O},C\sqrt{K})}\|\x\|^2\,(\varphi_i(\x))^2\,\md\x\to 0\ \hbox{ as }K\to+\infty,$$
and a similar property holds for $\varphi_i^{\sharp_2}$. Finally,~\eqref{ineqK2} shows that
$$K\int_{r_i+K>0}\varphi_i^2-K\int_{r_i^{\sharp_2}+K>0}(\varphi_i^{\sharp_2})^2\to 0\ \hbox{ as }K\to+\infty.$$
Passing to the limit $K\to +\infty$ in \eqref{eq:riK} and using the dominated convergence theorem, we obtain~\eqref{eq:ineg_phi_mi}.

Finally, using~\eqref{eq:Steiner_PS}, we get that
\begin{equation} \label{eq:ineg_PS_2}
\sum_{i=1}^3\int_{\R^n}\|\nabla \varphi_i\|^2\ge\sum_{i=1}^3\int_{\R^n}\|\nabla \varphi_i^{\sharp_2}\|^2.
\end{equation}
Coming back to~\eqref{eq:lambda3_beforeprojection}, and using~\eqref{eq:HLP_2}, \eqref{eq:ineg_phi_mi} and \eqref{eq:ineg_PS_2}, it follows that
\begin{equation} \label{eq:lambda3_beforeprojection-conclusion}
\lambda_3\ge\sum_{i=1}^3\int_{\R^n}\lp \frac{\mu^2}{2}\,\|\nabla \varphi_i^{\sharp_2}\|^2-r_i^{\sharp_2} \, (\varphi_i^{\sharp_2})^2 \rp +\delta - \delta \int_{\R^n} (\varphi_1^{\sharp_2} \, \varphi_2^{\sharp_2} + \varphi_2^{\sharp_2}\, \varphi_3^{\sharp_2} +\varphi_1^{\sharp_2} \, \varphi_3^{\sharp_2}).
\end{equation}
Using \eqref{eq:steiner_sumnorm} and $(\varphi_1^{\sharp_2},\varphi_2^{\sharp_2},\varphi_3^{\sharp_2})\in(H^1(\R^n)\cap L^2_w(\R^n))^3$, together with $(r_1^{\sharp_2},r_2^{\sharp_2},r_3^{\sharp_2})=(r_1,r_2,r_{\opt_3^{\sharp}})$, it follows that $(\varphi_1^{\sharp_2},\varphi_2^{\sharp_2},\varphi_3^{\sharp_2})$ is an admissible triplet in the Rayleigh quotient associated with the optima $(\opt_1,\opt_2,\opt_3^{\sharp})$. As a conclusion,~\eqref{eq:lambda3_beforeprojection-conclusion} yields $\lambda_3 \ge\lambda_3(\opt_1,\opt_2,\opt_3^{\sharp})$, which proves Proposition~\ref{prop:projection}.
\end{proof}


\bibliographystyle{plain}


\end{document}